\documentclass[12pt]{amsart}
\usepackage{graphicx}
\usepackage{amssymb}
\usepackage{amsmath}
\usepackage{amsthm}
\usepackage{amscd}
\usepackage{epsfig}
\usepackage{color}

\oddsidemargin=6pt \evensidemargin=6pt \topmargin -10pt \textwidth
15true cm \textheight 21true cm

\newtheorem{thm}{Theorem}[section]

\newtheorem{lem}[thm]{Lemma}
\newtheorem{prop}[thm]{Proposition}
\newtheorem{ques}{Question}

\newtheorem{ex}[thm]{Example}
\newtheorem{rem}[thm]{Remark}
\theoremstyle{definition} \theoremstyle{question}
\newtheorem{defn}[thm]{Definition}
\numberwithin{equation}{section}

\newcommand{\N}{\mathbb N}

\def\R {\Bbb R}
\def\Z {\Bbb Z}

\def \ep {\epsilon}
\def \ra {\rightarrow}

\begin{document}

\title{Lowering topological entropy over subsets revisited}

\date{November 28, 2012}

\author{Wen Huang, Xiangdong Ye and Guohua Zhang}

\address{Wu Wen-Tsun Key Laboratory of Mathematics, USTC, Chinese Academy of Sciences and
Department of Mathematics, University of Science and Technology of
China, Hefei, Anhui, 230026, P.R. China}

\email{wenh@mail.ustc.edu.cn, yexd@ustc.edu.cn}

\address{School of Mathematical Sciences and LMNS, Fudan University, Shanghai 200433, China}

\email{zhanggh@fudan.edu.cn}

\subjclass[2000]{Primary: 37B40, 37A35, 37B10,
37A05.}\keywords{entropy, principal extension, lowerable,
hereditarily lowerable}

\thanks{Huang is supported by NNSF of China (11225105), Fok Ying Tung Education Foundation and
the Fundamental Research Funds for the Central Universities,
Huang+Ye are supported by NNSF of China (11071231), and Zhang is
supported by FANEDD (201018) and NSFC (11271078).}

\begin{abstract}
Let $(X, T)$ be a topological dynamical system. Denote by $h (T, K)$
and $h^B (T, K)$ the covering entropy and dimensional entropy of
$K\subseteq X$, respectively.  $(X, T)$ is called D-{\it lowerable}
(resp. {\it lowerable}) if for each $0\le h\le h (T, X)$ there is a
subset (resp. closed subset) $K_h$ with $h^B (T, K_h)= h$ (resp. $h
(T, K_h)= h$); is called D-{\it hereditarily lowerable} (resp. {\it
hereditarily lowerable}) if each Souslin subset (resp. closed
subset) is D-lowerable (resp. lowerable).

In this paper it is proved that each topological dynamical system is
not only lowerable but also D-lowerable, and each asymptotically
$h$-expansive system is D-hereditarily lowerable. A minimal system
which is lowerable and not hereditarily lowerable is demonstrated.

\end{abstract}

\maketitle


\markboth{lowering topological entropy over subsets (II)}{Wen Huang,
Xiangdong Ye and Guohua Zhang}

\section{Introduction}

This paper is a continuation of the research done in \cite{HYZ1} by
the same authors.

Throughout the paper, by a {\it topological dynamical system}
(t.d.s.) $(X, T)$ we mean a compact metric space $X$ together with a
homeomorphism $T: X\rightarrow X$. Let $(X, T)$ be a t.d.s. and
$K\subseteq X$. Denote by $h (T, K)$ and $h^B (T, K)$ the covering
entropy and dimensional entropy of $K\subseteq X$ introduced in
\cite{B} and \cite{B3} respectively. Motivated by \cite{L1, L2, LW,
SW} in \cite{HYZ1} the authors studied the question if for each
$0\le h\le h (T, X)$ there is a closed subset of $X$ with entropy
$h$. Inspired by \cite[Remark 5.13]{YZ}, in \cite{HYZ1} we call $(X,
T)$
\begin{enumerate}

\item {\it lowerable} if for each $0\le h\le h (T, X)$ there is
a closed $K\subseteq X$ with $h (T, K)= h$;

\item {\it hereditarily
lowerable} if each closed subset is lowerable, i.e. for each closed
$K\subseteq X$ and any $0\le h\le h (T, K)$ there is a closed
$K_h\subseteq K$ with $h (T,K)= h$;

\item {\it hereditarily
uniformly lowerable} if for each closed subset $K\subseteq X$ and
any $0\le h\le h (T, K)$ there is a closed $K_h\subseteq K$ such
that $h (T, K_h)= h$ and $K_h$ has at most one limit point.
\end{enumerate}

\noindent Then the question is divided further into the following
questions in \cite{HYZ1}:

\begin{ques} \label{q1}
Is any t.d.s. lowerable?
\end{ques}

\begin{ques} \label{q2}
Is any t.d.s. hereditarily lowerable?
\end{ques}

\begin{ques} \label{q3}
Is any t.d.s. hereditarily uniformly lowerable?
\end{ques}

We remark that the reason we ask Question \ref{q3} in such a way is
that in \cite{YZ} the authors showed that if $(X,T)$ is a t.d.s. and
$K\subset X$ is a compact infinite subset, then there is a countable
subset $K'\subset K$ (the derived set of which has at most one limit
point) with $h(T,K')=h(T,K)$. In \cite{HYZ1} the authors showed that
each t.d.s. with finite entropy is lowerable, and that a t.d.s. is
hereditarily uniformly lowerable iff it is asymptotically
$h$-expansive. In particular, each hereditarily uniformly lowerable
t.d.s. has finite entropy. Moreover, a principal extension preserves
the lowerable, hereditarily lowerable and hereditarily uniformly
lowerable properties. Though we completely answered Question
\ref{q3}, Question \ref{q1} in the case $h(T,X)=+\infty$ and
Question \ref{q2}  still remain open in \cite{HYZ1}.

Let $X$ be a metric space, the {\it Souslin sets} are the sets of
the form
\begin{equation*}
E= \bigcup_{i_1\in \N, i_2\in \N, \cdots} \ \ \bigcap_{k\in \N} E_{i_1,
\cdots, i_k},
\end{equation*}
where $E_{i_1, \cdots, i_k}$ is a closed set for each finite
sequence $\{i_1, \cdots, i_k\}$ of positive integers. Observe that
each Borel set is Souslin, the pre-image of a Souslin set under a
continuous map is Souslin,  and if the underlying metric spaces are
complete then any continuous image of a Souslin set is Souslin. The
well-known result in fractal geometry \cite{Fal, M} states that (for
the definition of Hausdorff dimension see \cite{Fal, M})

\begin{prop}\label{prop2.1}
Let $K\subseteq \R^n$ be a non-empty Souslin subset. Then for each
$0\le h< \text{dim}_H (K)$ there is a compact subset $K_h$ of $K$
with $\text{dim}_H (K_h)= h$, where $\text{dim}_H (*)$ denotes the
Hausdorff dimension of a subset $*$ in $\R^n$.
\end{prop}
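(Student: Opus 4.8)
The plan is to derive Proposition \ref{prop2.1} from the classical \emph{subset theorem} for Hausdorff measures of analytic sets, which is exactly the strength of fractal geometry the references \cite{Fal, M} supply. I first dispose of the boundary value $h=0$: since $K$ is non-empty I simply take $K_0=\{x\}$ for any $x\in K$, which is compact with $\text{dim}_H(K_0)=0$. So from now on I fix $0<h<\text{dim}_H(K)$.

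The core observation is that the strict inequality $h<\text{dim}_H(K)$ forces $\mathcal{H}^h(K)=+\infty$, directly from the definition of Hausdorff dimension as $\text{dim}_H(K)=\sup\{s:\mathcal{H}^s(K)=+\infty\}$, where $\mathcal{H}^s$ denotes $s$-dimensional Hausdorff measure. I then invoke the subset theorem (Besicovitch in the closed case, Davies for general Souslin sets): if $A\subseteq\R^n$ is Souslin and $\mathcal{H}^s(A)=+\infty$, then $A$ contains a compact subset $C$ with $0<\mathcal{H}^s(C)<+\infty$. Applying this with $A=K$ and $s=h$ produces a compact $K_h\subseteq K$ with $0<\mathcal{H}^h(K_h)<+\infty$. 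Finiteness of $\mathcal{H}^h(K_h)$ yields $\text{dim}_H(K_h)\le h$ and positivity yields $\text{dim}_H(K_h)\ge h$, so $\text{dim}_H(K_h)=h$, which is precisely the conclusion. Note that choosing $s=h$ hits the target dimension in a single step, so no separate intermediate-value argument is needed.

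The main obstacle is the subset theorem itself, and specifically the upgrade from closed to Souslin sets, which is where the hypothesis that $K$ is Souslin is genuinely used. The route I would follow is through Choquet capacitability: one checks that the Hausdorff content $\mathcal{H}^s_\infty$ (or an equivalent dyadic net measure, which is better behaved under the set operations involved) is a Choquet capacity, so that the analytic set $K$ is capacitable and $\mathcal{H}^s_\infty(K)$ is approximated from within by compact subsets. Because $\mathcal{H}^s_\infty(A)>0$ is equivalent to $\mathcal{H}^s(A)>0$, this inner approximation delivers a compact $C\subseteq K$ of positive $\mathcal{H}^h$-measure, on which one then runs Besicovitch's density and covering argument to trim $C$ down to a compact subset of finite positive $\mathcal{H}^h$-measure. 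Verifying the capacity axioms for the net measure, and the Besicovitch trimming, are the two technical points I would expect to carry the weight of the proof; everything else is bookkeeping, and for the purposes of this paper it suffices to cite \cite{Fal, M} for these facts.
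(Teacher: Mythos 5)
Your proof is correct, and it is essentially the argument the paper implicitly relies on: Proposition \ref{prop2.1} is stated in the paper without proof, by citation to \cite{Fal, M}, where it is obtained exactly as you obtain it, from the Besicovitch--Davies subset theorem that a Souslin subset of $\R^n$ with $\mathcal{H}^h$-measure $+\infty$ contains a compact subset of positive finite $\mathcal{H}^h$-measure. Your reductions ($h=0$ handled by a singleton, $h<\dim_H(K)$ forcing $\mathcal{H}^h(K)=+\infty$, and the dimension computation from $0<\mathcal{H}^h(K_h)<+\infty$) are all sound, and deferring the capacitability and net-measure technicalities to the cited references is appropriate for a result the paper itself quotes from the literature.
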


\noindent Inspired by this, for a t.d.s. $(X, T)$ we call it
\begin{enumerate}

\item D-{\it
lowerable} if for each $0\le h\le h (T, X)$ there is a subset $K_h$
with $h^B (T, K_h)= h$;

\item D-{\it hereditarily lowerable} if each Souslin subset is D-lowerable,
i.e. for each Souslin set $K\subseteq X$ and any $0\le h\le h^B (T,
K)$ there is $K_h\subseteq K$ with $h^B (T, K_h)= h$.
\end{enumerate}

\noindent Thus, we have other two additional questions:

\begin{ques} \label{q4}
Is any t.d.s. D-lowerable?
\end{ques}

\begin{ques} \label{q5}
Is any t.d.s. D-hereditarily lowerable?
\end{ques}

We emphasize that, in fact, \cite[Theorem 4.4]{HYZ1} tells
us that each t.d.s. with finite entropy is D-lowerable.

In this paper, we get complete answers to Questions \ref{q1} and
\ref{q4}; and partial answers to Questions \ref{q2} and \ref{q5}
(Question \ref{q3} was answered by \cite[Theorem 7.7]{HYZ1}). Namely, with the
help of a relative version of the well-known Sinai Theorem we prove
that each t.d.s. is not only lowerable but also D-lowerable. We
shall construct a minimal lowerable t.d.s. which is not hereditarily
lowerable. Moreover, we also prove that each asymptotically
$h$-expansive t.d.s. is D-hereditarily lowerable. Whereas, there
remain some interesting questions unsolved. For example, is there a
lowerable t.d.s. with finite entropy which is not hereditarily
lowerable?

The paper is organized as follows. In Section 2 the definitions of
cover entropy and dimensional entropy of subsets are introduced. In
Section 3, a minimal lowerable t.d.s. which is not hereditarily
lowerable is presented.
Then in section 4 it is proved that each t.d.s. is not only
lowerable but also D-lowerable with the help of a relative version
of the well-known Sinai Theorem. In the last section, it is shown
that each asymptotically $h$-expansive t.d.s. is D-hereditarily
lowerable.


\medskip
\noindent {\bf Acknowledgement}: We would like to thank Downarowicz,
Glasner and Weiss for useful discussions. We also would like to thank the referee for the careful reading
and useful comments that resulted in substantial improvements to this paper.

\section{Preliminary}

Let $(X, T)$ be a  t.d.s., $K\subseteq X$ and $\mathcal{W}$ a
collection of subsets of $X$. We shall write $K\succeq \mathcal{W}$
if $K\subseteq W$ for some $W\in \mathcal{W}$ and else $K\nsucceq
\mathcal{W}$. If $\mathcal{W}_1$ is another family of subsets of
$X$, $\mathcal{W}$ is said to be {\it finer} than $\mathcal{W}_1$
(we shall write $\mathcal{W}\succeq \mathcal{W}_1$) when $W\succeq
\mathcal{W}_1$ for each $W\in \mathcal{W}$. We shall say that a
numerical function {\it increases} (resp. {\it decreases}) with
respect to (w.r.t.) a set variable $K$ or a family variable
$\mathcal{W}$ if the value never decreases (resp. increases) when
$K$ is replaced by a set $K_1$ with $K_1\subseteq K$ or when
$\mathcal{W}$ is replaced by a family $\mathcal{W}_1$ with
$\mathcal{W}_1\succeq \mathcal{W}$.

By a {\it cover} of $X$ we mean a finite family of Borel subsets
with union $X$, and a {\it partition} a cover whose elements are
disjoint. Denote by $\mathcal{C}_X$ (resp. $\mathcal{C}^o_X$,
$\mathcal{P}_X$) the set of covers (resp. open covers, partitions).
If $\alpha\in \mathcal{P}_X$ and $x\in X$ then let $\alpha (x)$ be
the element of $\alpha$ containing $x$.

Given $\mathcal{U}_1, \mathcal{U}_2\in \mathcal{C}_X$, set
$\mathcal{U}_1\vee \mathcal{U}_2= \{U_1\cap U_2: U_1\in
\mathcal{U}_1, U_2\in \mathcal{U}_2\}$, obviously $\mathcal{U}_1\vee
\mathcal{U}_2\in \mathcal{C}_X$ and $\mathcal{U}_1\vee
\mathcal{U}_2\succeq \mathcal{U}_1$. $\mathcal{U}_1\succeq
\mathcal{U}_2$ need not imply that $\mathcal{U}_1\vee \mathcal{U}_2=
\mathcal{U}_1$, $\mathcal{U}_1\succeq \mathcal{U}_2$ iff
$\mathcal{U}_1$ is equivalent to $\mathcal{U}_1\vee \mathcal{U}_2$
in the sense that each refines the other. For each $\mathcal{U}\in
\mathcal{C}_X$ and any $m, n\in \mathbb{Z}_+$ with $m\le n$ we set
$\mathcal{U}_m^n= \bigvee_{i= m}^n T^{- i} \mathcal{U}$. Moreover,
if $(X, T)$ is a t.d.s. then let $\text{diam} (K)$ be the diameter
of $K$ and put $||\mathcal{W}||= \sup \{\text{diam} (W): W\in
\mathcal{W}\}$, thus if $\mathcal{U}\in \mathcal{C}^o_X$ then
$\mathcal{U}$ has a Lebesgue number $\lambda> 0$ and so
$\mathcal{W}\succeq \mathcal{U}$ when $||\mathcal{W}||< \lambda$.

\subsection{Covering entropy of subsets}

Let $(X, T)$ be a  t.d.s., $K\subseteq X$ and $\mathcal{U}\in
\mathcal{C}_X$. Set $N (\mathcal{U}, K)$ to be the minimal
cardinality of sub-families $\mathcal{V}\subseteq \mathcal{U}$ with
$\cup \mathcal{V}\supseteq K$, where $\cup \mathcal{V}=
\bigcup_{V\in \mathcal{V}}V$. We write $N(\mathcal{U}, \emptyset)=
1$ by convention. Obviously, $N (\mathcal{U}, T (K))= N (T^{- 1}
\mathcal{U}, K)$. Let
\begin{equation*}
h_\mathcal{U} (T, K)= \limsup_{n\rightarrow +\infty} \frac{1}{n}
\log N (\mathcal{U}_0^{n- 1}, K).
\end{equation*}
Clearly $h_\mathcal{U} (T, K)$ increases w.r.t. $\mathcal{U}$.
Define the {\it covering entropy of $K$} by
$$h (T, K)= \sup_{\mathcal{U}\in \mathcal{C}^o_X} h_\mathcal{U} (T, K),$$
and define the {\it topological entropy of $(X, T)$} by
$h_{\text{top}} (T, X)= h (T, X)$.

Let $(X, T)$ and $(Y, S)$ be  t.d.s.s. We say that $\pi: (X,
T)\rightarrow (Y, S)$ is a {\it factor map} if $\pi$ is a continuous
surjection and $\pi\circ T= S\circ \pi$. It is easy to check that

\begin{prop} \label{090319}
Let $(X, T)$ and $(Y, S)$ be t.d.s.s. Then
\begin{enumerate}

\item $h (T, K)\ge h (S, \pi (K))$ if
$\pi: (X, T)\rightarrow (Y, S)$ is a factor map and $K\subseteq X$;

\item $h (T\times S, X\times Y)= h (T, X)+ h (S, Y)$.
\end{enumerate}
\end{prop}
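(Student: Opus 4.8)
The plan is to treat the two parts separately, deriving each from the behaviour of the covering numbers $N(\cdot,\cdot)$ under, respectively, pull-back along a factor map and products.

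For (1), I would work with pull-backs of open covers. Given $\mathcal{V}\in\mathcal{C}^o_Y$, the family $\pi^{-1}\mathcal{V}=\{\pi^{-1}(V):V\in\mathcal{V}\}$ lies in $\mathcal{C}^o_X$ because $\pi$ is a continuous surjection. The identity $\pi\circ T=S\circ\pi$ gives $T^{-1}\pi^{-1}(V)=\pi^{-1}(S^{-1}V)$ for every $V\subseteq Y$, and hence $(\pi^{-1}\mathcal{V})_0^{n-1}=\pi^{-1}(\mathcal{V}_0^{n-1})$ for all $n$. The crucial point is the equality $N(\pi^{-1}\mathcal{W},K)=N(\mathcal{W},\pi(K))$ for every $\mathcal{W}\in\mathcal{C}_Y$: a subfamily $\mathcal{W}'\subseteq\mathcal{W}$ covers $\pi(K)$ if and only if the corresponding subfamily $\{\pi^{-1}(W):W\in\mathcal{W}'\}$ covers $K$, since $K\subseteq\pi^{-1}(\cup\mathcal{W}')$ is equivalent to $\pi(K)\subseteq\cup\mathcal{W}'$. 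Applying this to $\mathcal{W}=\mathcal{V}_0^{n-1}$ yields $h_{\pi^{-1}\mathcal{V}}(T,K)=h_{\mathcal{V}}(S,\pi(K))$, and taking the supremum over $\mathcal{V}\in\mathcal{C}^o_Y$ (whose pull-backs form a subfamily of $\mathcal{C}^o_X$) gives $h(T,K)\ge\sup_{\mathcal{V}}h_{\mathcal{V}}(S,\pi(K))=h(S,\pi(K))$.

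For (2), the inequality $\le$ is the routine half. By the Lebesgue-number remark, any $\mathcal{W}\in\mathcal{C}^o_{X\times Y}$ is refined by a product cover $\mathcal{U}\times\mathcal{V}=\{U\times V:U\in\mathcal{U},V\in\mathcal{V}\}$ of sufficiently small mesh, so $h(T\times S,X\times Y)=\sup_{\mathcal{U},\mathcal{V}}h_{\mathcal{U}\times\mathcal{V}}(T\times S,X\times Y)$. Since $(T\times S)^{-i}(U\times V)=T^{-i}U\times S^{-i}V$ one checks $(\mathcal{U}\times\mathcal{V})_0^{n-1}=\mathcal{U}_0^{n-1}\times\mathcal{V}_0^{n-1}$, and products of optimal subcovers give $N(\mathcal{A}\times\mathcal{B},X\times Y)\le N(\mathcal{A},X)\,N(\mathcal{B},Y)$. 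Together with the subadditivity of $\limsup$ this yields $h_{\mathcal{U}\times\mathcal{V}}(T\times S,X\times Y)\le h_{\mathcal{U}}(T,X)+h_{\mathcal{V}}(S,Y)\le h(T,X)+h(S,Y)$, whence $\le$.

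The reverse inequality is where I expect the real work, and a warning is in order: the naive bound $N(\mathcal{A}\times\mathcal{B},X\times Y)\ge N(\mathcal{A},X)N(\mathcal{B},Y)$ is \emph{false}, since product covers can tile $X\times Y$ more efficiently than the two factors suggest, so one cannot simply reverse the per-$n$ estimate above. I would therefore bypass the covering numbers for this direction and use that, for the full space, $h(T,X)$ coincides with the entropy computed from maximal $(n,\eps)$-separated sets. The product of an $(n,\eps)$-separated set in $X$ with one in $Y$ is $(n,\eps)$-separated in $X\times Y$ for the maximum metric, so the maximal separated cardinalities multiply; letting $n\to\infty$ and then $\eps\to 0$ gives $h(T\times S,X\times Y)\ge h(T,X)+h(S,Y)$. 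Alternatively one may invoke the variational principle together with the additivity $h_{\mu\times\nu}(T\times S)=h_\mu(T)+h_\nu(S)$ of measure-theoretic entropy and take the supremum over product measures. Either route closes the gap and, combined with the first half, establishes (2).
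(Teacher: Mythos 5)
The paper gives no proof of Proposition \ref{090319} at all --- it is introduced with ``It is easy to check that'' and implicitly deferred to standard references such as Walters --- so there is no argument to diverge from; your write-up supplies the standard one, and it is essentially correct. Part (1) is exactly right: the equivalence $K\subseteq\pi^{-1}(A)$ iff $\pi(K)\subseteq A$ gives $N(\pi^{-1}\mathcal{W},K)=N(\mathcal{W},\pi(K))$, and the commutation $(\pi^{-1}\mathcal{V})_0^{n-1}=\pi^{-1}\bigl(\mathcal{V}_0^{n-1}\bigr)$ together with the Lebesgue-number remark does the rest. In part (2) your upper bound via product covers is fine (the paper itself runs the analogous upper bound in Lemma \ref{prod0}(2) by the same spanning-set mechanism), and your warning that minimal-subcover counts are not supermultiplicative is well taken --- this is a genuine phenomenon, of the same nature as Shannon capacity, and it is precisely why the lower bound must be routed through separated sets or measures.

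One step in your separated-set route needs a word more than you give it. From $s_n(d,T\times S,\eps,X\times Y)\ge s_n(d_X,T,\eps,X)\,s_n(d_Y,S,\eps,Y)$, ``letting $n\to\infty$'' with the $\limsup$ in the definition only yields $h(T\times S,X\times Y)\ge h(T,X)+\liminf_{n}\frac1n\log s_n(d_Y,S,\eps,Y)$ in the limit $\eps\to 0$, since $\limsup$ of a sum is not bounded below by the sum of the $\limsup$'s. This is harmless here but should be said: for $K=X$ the sequence $\log N(\mathcal{U}_0^{n-1},X)$ is subadditive, so $\frac1n\log N(\mathcal{U}_0^{n-1},X)$ converges, and the sandwich $N(\mathcal{U}_0^{n-1},X)\le r_n(d,T,\delta',X)\le s_n(d,T,\delta',X)\le N(\mathcal{V}_0^{n-1},X)$ recorded in Section 2 then shows that the $\liminf$ and $\limsup$ versions of the separated-set entropy of the whole space agree after $\eps\to 0$. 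With that remark inserted, your first route closes; your alternative via the variational principle and $h_{\mu\times\nu}(T\times S)=h_\mu(T)+h_\nu(S)$ is airtight as stated, so the proposal stands either way.
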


We may also obtain the cover entropy of
subsets using Bowen's separated and spanning sets (see
\cite[P$_{168-174}$]{Wa}). Let $(X, T)$ be a t.d.s. with $d$ a compatible
metric on $X$. For each $n\in \mathbb{N}$ we define a new metric
$d_n$ on $X$ by
$$d_n (x, y)= \max_{0\le i\le n- 1}d (T^i x, T^i y).$$ Let
$\epsilon> 0$ and $K\subseteq X$. A subset $F$ of $X$ is said to
{\it $(n,\epsilon)$-span $K$ w.r.t. $T$} if for each $ x\in K$,
there is $y\in F$ with $d_n(x,y)\le\epsilon$; a subset $E$ of $K$ is
said to be {\it $(n,\epsilon)$-separated w.r.t. $T$} if $x,y\in E,
x\neq y$ implies $d_n(x,y)>\epsilon$. Let $r_n(d,T,\epsilon,K)$
denote the smallest cardinality of any $(n,\epsilon)$-spanning set
for $K$ w.r.t. $T$ and $s_n(d,T,\epsilon,K)$ denote the largest
cardinality of any $(n,\epsilon)$-separated subset of $K$ w.r.t.
$T$. We write $r_n(d,T,\epsilon,\emptyset)=s_n (d, T, \epsilon,
\emptyset)= 1$ by convention. Put
$$r (d, T, \epsilon, K)= \limsup_{n\rightarrow +\infty} \frac{1}{n}
\log r_n (d, T, \epsilon, K)$$ and
$$ s (d, T, \epsilon, K)= \limsup_{n\rightarrow +\infty} \frac{1}{n}
\log s_n (d, T, \epsilon, K).$$ Then put $$h_* (d, T, K)=
\lim_{\epsilon\rightarrow 0+} r (d, T, \epsilon, K)\ \text{and}\ h^*
(d, T, K)= \lim_{\epsilon\rightarrow 0+} s (d, T, \epsilon, K).$$ It
is  well known that $h_* (d, T, K)= h^* (d, T, K)$ is independent of
the choice of a compatible metric $d$ on the space $X$. Now, if
$\mathcal{U}\in \mathcal{C}^o_X$ has a Lebesgue number $\delta> 0$
then, for any $\delta'\in (0, \frac{\delta}{2})$ and each
$\mathcal{V}\in \mathcal{C}^o_X$ with $||\mathcal{V}||\le \delta'$,
one has
$$N(\mathcal{U}_0^{n- 1}, K)\le r_n (d, T, \delta', K)\le s_n (d, T,
\delta', K)\le N(\mathcal{V}_0^{n- 1}, K)$$ for each $n\in
\mathbb{N}$. So if $\{\mathcal{U}_n\}_{n\in \mathbb{N}}\subseteq
\mathcal{C}^o_X$ satisfies $||\mathcal{U}_n||\rightarrow 0$ as
$n\rightarrow +\infty$ then
\begin{equation*}
h_* (d, T, K)=h^* (d, T, K)=\lim_{n\rightarrow +\infty}
h_{\mathcal{U}_n} (T, K)= h (T, K).
\end{equation*}
In this case, it is obvious that $h (T, \overline{K})= h (T, K)$.

\subsection{Dimensional entropy of subsets}

Now we recall the concept of dimensional entropy introduced and
studied in \cite{B3}.

Let $(X, T)$ be a  t.d.s. and $\mathcal{U}\in \mathcal{C}_X$. For
$K\subseteq X$ let
\[
n_{T, \mathcal{U}} (K)=\left\{
\begin{array}{ll}
0, &\mbox{if $K\nsucceq \mathcal{U}$};\\
+\infty, &\mbox{if $T^i K\succeq \mathcal{U}$ when $i\in \mathbb{Z}_+$};\\
k, &\mbox{$k= \max\{j\in\N: T^i(K)\succeq \mathcal{U}\ \text{when}\
0\le i\le j- 1\}$}.
\end{array}
\right.
\]
For $k\in \mathbb{N}$, we define $\mathfrak{C} (T,\mathcal{U},K, k)$
to be the family of all $\mathcal{E}$, where $\mathcal{E}$ is a
countable family of subsets of $X$ such that $K\subseteq \cup
\mathcal{E}$ and $\mathcal{E}\succeq \mathcal{U}_0^{k-1}$. Then for
each $\lambda\in \mathbb{R}$ set
\begin{equation*}
m_{T, \mathcal{U}} (K, \lambda,k)=\inf_{\mathcal{E}\in \mathfrak{C}
(T, \mathcal{U},K, k)} m (T, \mathcal{U}, \mathcal{E}, \lambda),
\end{equation*}
where $$m (T, \mathcal{U}, \mathcal{E}, \lambda)= \sum_{E\in
\mathcal{E}} e^{-\lambda n_{T, \mathcal{U}} (E)},$$ here, by
convention: $0\cdot \infty= 0$ and
$m_{T,\mathcal{U}}(\emptyset,\lambda, k)= +\infty$ if $\lambda< 0$;
$1$ if $\lambda= 0$; $0$ if $\lambda> 0$. As
$m_{T,\mathcal{U}}(K,\lambda,k)$ is increasing w.r.t. $k$, we can
define
$$m_{T,\mathcal{U}}(K,\lambda)=\lim_{k\rightarrow +\infty}
m_{T, \mathcal{U}} (K, \lambda,k).$$ Notice that
$m_{T,\mathcal{U}}(K,\lambda)\le m_{T,\mathcal{U}}(K,\lambda')$ for
$\lambda\ge \lambda'$ and $m_{T, \mathcal{U}} (K, \lambda)\notin
\{0, +\infty\}$ for at most one $\lambda$ \cite{B3}. We define {\it
the dimensional entropy of $K$ relative to $\mathcal{U}$} by
\begin{equation*}
h^B_\mathcal{U} (T, K)= \inf \{\lambda\in \mathbb{R}: m_{T,
\mathcal{U}} (K, \lambda)= 0\}= \sup \{\lambda \in \mathbb{R}: m_{T,
\mathcal{U}} (K, \lambda)= +\infty\}.
\end{equation*}
The {\it dimensional entropy of $K$} is defined by
$$h^B (T, K)= \sup_{\mathcal{U}\in \mathcal{C}^o_X} h_\mathcal{U}^B (T, K).$$
Note that $h_{\mathcal{U}}^B (T, K)$ increases w.r.t.
$\mathcal{U}\in \mathcal{C}_X$, thus if $(X, T)$ is a t.d.s. and
$\{\mathcal{U}_n\}_{n\in \mathbb{N}}\subseteq \mathcal{C}^o_X$
satisfies $||\mathcal{U}_n||\rightarrow 0$ as $n\rightarrow +\infty$
then $\lim_{n\rightarrow +\infty} h^B_{\mathcal{U}_n} (T, K)=h^B (T,
K)$.

The following result is basic (see \cite[Propositions 1 and 2]{B3}
or \cite[Proposition 2.3]{HYZ1}).

\begin{prop} \label{06.02.28}
Let $(X, T)$ be a t.d.s., $K_1, K_2, \cdots, K\subseteq X$
and $\mathcal{U}\in \mathcal{C}_X$. Then
\begin{enumerate}

\item $h_\mathcal{U} (T, X)= h^B_\mathcal{U} (T, X)$ if $\mathcal{U}
\in \mathcal{C}_X^o$, so $h (T, X)= h^B (T, X)$;

\item $h^B_\mathcal{U} (T, \bigcup_{n\in \mathbb{N}}
K_n)= \sup_{n\in \mathbb{N}} h^B_\mathcal{U} (T, K_n)$, so
$$h^B (T,
\bigcup_{n\in \mathbb{N}} K_n)= \sup_{n\in \mathbb{N}} h^B (T,
K_n);$$

\item for each $m\in \mathbb{N}$ and $i\ge 0$,
$h^B_{T^{-i}\mathcal{U}} (T^m, K)= h^B_\mathcal{U} (T^m, T^iK)$,
so $h^B (T^m, K)= h^B (T^m, T^i K)$;

\item for each $m\in
\mathbb{N}$, $h^B_{\mathcal{U}_0^{m- 1}} (T^m, K)= m h^B_\mathcal{U}
(T, K)$, so $h^B (T^m, K)= m h^B (T, K)$.
\end{enumerate}
\end{prop}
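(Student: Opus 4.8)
The plan is to establish the four identities in order, proving each first relative to a fixed $\mathcal{U}$ and then passing to $\sup_{\mathcal{U}\in\mathcal{C}^o_X}$. Two structural facts drive everything. First, $h^B_\mathcal{U}(T,\cdot)$ is monotone: any $\mathcal{E}\in\mathfrak{C}(T,\mathcal{U},K,k)$ also covers each $K'\subseteq K$, so $m_{T,\mathcal{U}}(K',\lambda,k)\le m_{T,\mathcal{U}}(K,\lambda,k)$. Second, from the definition of $n_{T,\mathcal{U}}$ one has $n_{T,\mathcal{U}}(E)\ge k$ exactly when $E\succeq\mathcal{U}_0^{k-1}$; this is the bridge between the orbit-length function and the joins $\mathcal{U}_0^{k-1}$, and I would record it at the outset.

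For (2) only the inequality $h^B_\mathcal{U}(T,\bigcup_n K_n)\le\sup_n h^B_\mathcal{U}(T,K_n)$ needs proof. Fix $\lambda$ above the supremum; then $m_{T,\mathcal{U}}(K_n,\lambda)=0$ for each $n$, and since $m_{T,\mathcal{U}}(K_n,\lambda,k)$ is nonnegative and increasing in $k$ with limit $0$ it vanishes for every $k$. Given $\eps>0$ and $k$, pick $\mathcal{E}_n\in\mathfrak{C}(T,\mathcal{U},K_n,k)$ with $m(T,\mathcal{U},\mathcal{E}_n,\lambda)<\eps 2^{-n}$; then $\bigcup_n\mathcal{E}_n$ is a countable member of $\mathfrak{C}(T,\mathcal{U},\bigcup_n K_n,k)$ of total mass below $\eps$, so $m_{T,\mathcal{U}}(\bigcup_n K_n,\lambda)\le\eps$. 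Sending $\eps\to 0$ and letting $\lambda$ decrease to the supremum gives the relative statement, and commuting the two suprema $\sup_\mathcal{U}$ and $\sup_n$ gives the global one.

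Parts (3) and (4) are formal changes of variable. For (3) I would note, with all joins taken relative to $T^m$, that $(T^{-i}\mathcal{U})_0^{k-1}=T^{-i}(\mathcal{U}_0^{k-1})$, whence $E\succeq(T^{-i}\mathcal{U})_0^{k-1}$ iff $T^iE\succeq\mathcal{U}_0^{k-1}$ and therefore $n_{T^m,T^{-i}\mathcal{U}}(E)=n_{T^m,\mathcal{U}}(T^iE)$; thus $\mathcal{E}\mapsto\{T^iE:E\in\mathcal{E}\}$ is a mass-preserving bijection $\mathfrak{C}(T^m,T^{-i}\mathcal{U},K,k)\to\mathfrak{C}(T^m,\mathcal{U},T^iK,k)$, which forces equality of the two $m$-functions and hence of the relative entropies; taking $\sup_\mathcal{U}$ and using that $T^{-i}$ permutes $\mathcal{C}^o_X$ yields the global identity. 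For (4) I would identify $\mathfrak{C}(T^m,\mathcal{U}_0^{m-1},K,k)$ with $\mathfrak{C}(T,\mathcal{U},K,mk)$ via $(\mathcal{U}_0^{m-1})_0^{k-1}=\mathcal{U}_0^{mk-1}$, and check $n_{T^m,\mathcal{U}_0^{m-1}}(E)=\lfloor n_{T,\mathcal{U}}(E)/m\rfloor$. The elementary bound $\lambda n/m-\lambda\le\lambda\lfloor n/m\rfloor\le\lambda n/m$ then squeezes, for $\lambda\ge 0$, the quantity $m(T^m,\mathcal{U}_0^{m-1},\mathcal{E},\lambda)$ between $m(T,\mathcal{U},\mathcal{E},\lambda/m)$ and $e^\lambda$ times it; taking the infimum over the common cover family and then $k\to\infty$ shows $m_{T^m,\mathcal{U}_0^{m-1}}(K,\lambda)$ and $m_{T,\mathcal{U}}(K,\lambda/m)$ vanish together, i.e. $h^B_{\mathcal{U}_0^{m-1}}(T^m,K)=m\,h^B_\mathcal{U}(T,K)$. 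The global version follows because $\|\mathcal{U}_n\|\to 0$ forces $\|(\mathcal{U}_n)_0^{m-1}\|\to 0$, combined with the convergence remark stated just before the proposition.

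The main obstacle is (1), the equality $h_\mathcal{U}(T,X)=h^B_\mathcal{U}(T,X)$ for open $\mathcal{U}$, because it equates two a priori different growth rates rather than performing a bookkeeping identity. The inequality $h^B_\mathcal{U}(T,X)\le h_\mathcal{U}(T,X)$ is the easy half: a minimal subcover of $\mathcal{U}_0^{n-1}$ lies in $\mathfrak{C}(T,\mathcal{U},X,k)$ for all $k\le n$ and has mass at most $N(\mathcal{U}_0^{n-1},X)\,e^{-\lambda n}$, which tends to $0$ as $n\to\infty$ once $\lambda>h_\mathcal{U}(T,X)$, forcing $m_{T,\mathcal{U}}(X,\lambda)=0$. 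The reverse inequality is the genuinely hard step: from a cover $\mathcal{E}\succeq\mathcal{U}_0^{k-1}$ of the compact space $X$ of small mass one must manufacture an efficient subcover of $\mathcal{U}_0^{N-1}$ for large $N$, extending each $E$ beyond its good window of length $n_{T,\mathcal{U}}(E)$ and controlling the resulting cardinality through the submultiplicativity $N(\mathcal{U}_0^{a+b-1},X)\le N(\mathcal{U}_0^{a-1},X)N(\mathcal{U}_0^{b-1},X)$ and compactness. This is precisely Bowen's comparison argument, and it is the only place where the topology (compactness of $X$ and openness of $\mathcal{U}$) is used in an essential way; I would either reproduce it in detail or invoke \cite[Propositions 1 and 2]{B3}, the remaining parts being purely combinatorial.
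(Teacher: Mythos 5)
Your proposal is correct and matches the paper's treatment: the paper offers no proof at all, calling the proposition basic and citing \cite[Propositions 1 and 2]{B3} and \cite[Proposition 2.3]{HYZ1}, and your bookkeeping for (2)--(4) (countable stability via masses $\eps 2^{-n}$, the mass-preserving bijection $\mathcal{E}\mapsto T^i\mathcal{E}$, and the identities $(\mathcal{U}_0^{m-1})_0^{k-1}=\mathcal{U}_0^{mk-1}$, $n_{T^m,\mathcal{U}_0^{m-1}}(E)=\lfloor n_{T,\mathcal{U}}(E)/m\rfloor$ with the $e^\lambda$-squeeze) is exactly the standard argument found there, as is your easy half of (1). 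You also correctly isolate the only substantive step --- the inequality $h_\mathcal{U}(T,X)\le h^B_\mathcal{U}(T,X)$ for open $\mathcal{U}$, which is Bowen's compactness/concatenation argument --- and defer it to the same source the paper itself cites, so nothing is missing relative to the paper.
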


Thus, by Proposition \ref{06.02.28} (2), $h^B (T, E)$ increases
w.r.t.
 $E\subseteq X$ and
if $E\subseteq X$ is a non-empty countable set then $h^B (T, E)=0$.
It is worth mentioning that
\begin{enumerate}

\item
$h_{\mathcal{U}}^B(T,\emptyset)=h_{\mathcal{U}}(T,\emptyset)=0$ for
any  $\mathcal{U}\in \mathcal{C}_X$, and so
$h^B(T,\emptyset)=h(T,\emptyset)=0$;

\item when $\emptyset \neq
K\subseteq X$, one has $h_{\mathcal{U}}(T,K)\ge
h_{\mathcal{U}}^B(T,K)\ge 0$ for any $\mathcal{U}\in \mathcal{C}_X$,
and so
$$h(T,K)\ge h^B(T,K)\ge 0.$$
\end{enumerate}

\subsection{Hausdorff dimension and dimensional entropy}

Let $(X,d)$ be a metric space. We first recall the definition of
Hausdorff dimension of a subset $A\subset X$. Fix $t\ge 0$. For each
$\delta>0$ and subset  $A\subset X$, we define
$$H_d^{t,\delta}(A)=\inf \{ \sum_{i=1}^{+\infty} {\mbox{diam}(U_i)}^t\},$$
where the infimum is taken over all countable covers $\{
U_i:i=1,2,\cdots\}$ of $A$ of diameter not exceeding $\delta$. Since
$H_d^{t,\delta}(A)$ increases as $\delta$ decreases for any
$A\subseteq X$, we can define
$$H_d^t(A)=\lim_{\delta\rightarrow 0} H_d^{t,\delta}(A)
=\sup_{\delta>0} H_d^{t,\delta}(A).$$ The case $H_d^t(A)=+\infty$ is
not excluded. Fix $A\subseteq X$. Since for every $0<\delta\le 1$
the function $t\mapsto H_d^{t,\delta}(A)$ is non-increasing, so
is the function $t\mapsto H_d^{t}(A)$. Moreover, if $0<s<t$,
then for every $\delta>0$
$$H_d^{s,\delta}(A)\ge \delta^{s-t}H_d^{t,\delta}(A)$$
which implies that if $H_d^{t}(A)>0$, then $H_d^s(A)=+\infty$. Thus
there is a unique value $H_d(A)\in [0,+\infty]$, which is called the
{\it Hausdorff dimension} of $A$ with respect to the metric $d$ on
$X$, such that
$$H_d^t(A)=\left\{\begin{array}{ll}
 +\infty, &\quad \hbox{ if} \,\; 0\le t< H_d(A),\\
   0, &\quad \hbox{ if}\,\;
H_d(A)<t<\infty.\end{array}\right.
$$

The Hausdorff dimension is a monotone function of sets, i.e. if
$A\subseteq B$ then $H_d(A)\le H_d(B)$. Moreover if $\{ A_n\}_{n\ge
1}$ is a countable family of subsets of $X$ then
$$H_d(\bigcup\limits_{n=1}^\infty A_n)=\sup\limits_{n\ge 1}
H_d(A_n).$$ Hence if $E\subset X$ is countable then $H_d(E)=0$.

\medskip
In the following we investigate the interrelation of Hausdorff
dimension and dimensional entropy of a set in some specific t.d.s.
Let $(X,T)$ be a t.d.s. with metric $d$. We assume that $T$ is {\it
Lipschitz continuous} with the {\it Lipschitz constant} $L$, i.e.
$d(Tx,Ty)\le Ld(x,y)$ for any $x,y\in X$.

The following result is just \cite[Theorem 1]{Mi-add}.

\begin{lem} \label{nche1} Let $(X,T)$ be a Lipschitz continuous t.d.s. with Lipschitz constant $L>1$
associated to the metric $d$. Then
$$H_d(C)\geq {h^{B}(T, C) \over \log L} $$
for any subset  $C\subseteq X$.
\end{lem}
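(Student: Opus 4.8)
The plan is to compare the two Carath\'eodory-type constructions term by term, using Lipschitz continuity to control the gauge $n_{T,\mathcal{U}}$ by the diameter. Since $h^B(T,C)=\sup_{\mathcal{U}\in\mathcal{C}^o_X}h^B_{\mathcal{U}}(T,C)$, it suffices to bound $h^B_{\mathcal{U}}(T,C)$ for each $\mathcal{U}\in\mathcal{C}^o_X$. Fix such a $\mathcal{U}$ with Lebesgue number $\lambda>0$, and recall $h^B_{\mathcal{U}}(T,C)=\inf\{s\in\R:m_{T,\mathcal{U}}(C,s)=0\}$. I would fix an arbitrary $s>\log L\cdot H_d(C)$ and aim to prove $m_{T,\mathcal{U}}(C,s)=0$. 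Writing $t=s/\log L$, the strict inequality $t>H_d(C)$ yields $H_d^t(C)=0$, which is the input from the Hausdorff side.

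The crucial geometric step uses the bound $\mathrm{diam}(T^iE)\le L^i\,\mathrm{diam}(E)$. If $E\subseteq X$ satisfies $\mathrm{diam}(E)<\lambda$, then for every integer $i$ with $L^i\,\mathrm{diam}(E)<\lambda$ one has $\mathrm{diam}(T^iE)<\lambda$, so $T^iE$ lies in a single member of $\mathcal{U}$, i.e. $T^iE\succeq\mathcal{U}$. Counting the admissible $i$ gives
\[
n_{T,\mathcal{U}}(E)\ \ge\ \frac{\log\lambda-\log\mathrm{diam}(E)}{\log L}
\]
(interpreted as $+\infty$ when $\mathrm{diam}(E)=0$), and hence, since $s>0$,
\[
e^{-s\,n_{T,\mathcal{U}}(E)}\ \le\ \lambda^{-t}\,\mathrm{diam}(E)^{t}.
\]
Moreover, if $\mathrm{diam}(E)<\lambda L^{-(k-1)}$ then $T^iE\succeq\mathcal{U}$ holds for all $0\le i\le k-1$, which is precisely the condition $E\succeq\mathcal{U}_0^{k-1}$. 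Thus any countable cover of $C$ whose members all have diameter $<\lambda L^{-(k-1)}$ belongs to $\mathfrak{C}(T,\mathcal{U},C,k)$.

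Now fix $k\in\N$ and $\epsilon>0$ and choose $\delta<\lambda L^{-(k-1)}$. Since $H_d^t(C)=0$, there is a countable cover $\mathcal{E}=\{U_i\}$ of $C$ with $\mathrm{diam}(U_i)\le\delta$ for all $i$ and $\sum_i\mathrm{diam}(U_i)^t<\epsilon$. By the previous paragraph $\mathcal{E}\in\mathfrak{C}(T,\mathcal{U},C,k)$, and summing the per-element bound gives
\[
m(T,\mathcal{U},\mathcal{E},s)=\sum_i e^{-s\,n_{T,\mathcal{U}}(U_i)}\le \lambda^{-t}\sum_i\mathrm{diam}(U_i)^t<\lambda^{-t}\epsilon ,
\]
so that $m_{T,\mathcal{U}}(C,s,k)<\lambda^{-t}\epsilon$. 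As $\epsilon>0$ is arbitrary, $m_{T,\mathcal{U}}(C,s,k)=0$ for every $k$, and since $m_{T,\mathcal{U}}(C,s,k)$ increases to $m_{T,\mathcal{U}}(C,s)$ as $k\to+\infty$, we get $m_{T,\mathcal{U}}(C,s)=0$. Thus $h^B_{\mathcal{U}}(T,C)\le s$ for every $s>\log L\cdot H_d(C)$, whence $h^B_{\mathcal{U}}(T,C)\le \log L\cdot H_d(C)$; taking the supremum over $\mathcal{U}\in\mathcal{C}^o_X$ and dividing by $\log L>0$ yields the claim. The cases $h^B(T,C)=0$ and $H_d(C)=+\infty$ are trivial, so one may assume $0<t<+\infty$ and in particular $s>0$, which legitimizes the convention $0\cdot\infty=0$ used above.

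The main obstacle, and the only point where the Lipschitz hypothesis is genuinely needed, is to reconcile the subordination constraint $\mathcal{E}\succeq\mathcal{U}_0^{k-1}$ built into $\mathfrak{C}(T,\mathcal{U},C,k)$ with the freedom on the Hausdorff side to use covers of arbitrarily small diameter. The estimate $\mathrm{diam}(T^iE)\le L^i\,\mathrm{diam}(E)$ is exactly what converts smallness of diameter into a controlled number of iterates remaining subordinate to $\mathcal{U}$, so that a single small-diameter cover simultaneously certifies membership in $\mathfrak{C}$ and makes $m(T,\mathcal{U},\mathcal{E},s)$ small. Letting the threshold $\delta$ depend on $k$ (rather than seeking one cover good for all $k$ at once) keeps the argument clean; the only remaining care is in the degenerate terms, where $\mathrm{diam}(U_i)=0$ forces $n_{T,\mathcal{U}}(U_i)=+\infty$ and both sides of the per-element bound vanish.
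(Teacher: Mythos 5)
Your argument is correct in every detail: the key bound $n_{T,\mathcal{U}}(E)\ge \frac{\log\lambda-\log \operatorname{diam}(E)}{\log L}$ coming from $\operatorname{diam}(T^iE)\le L^i\operatorname{diam}(E)$ and the Lebesgue number, the observation that a countable cover by sets of diameter less than $\lambda L^{-(k-1)}$ automatically lies in $\mathfrak{C}(T,\mathcal{U},C,k)$, the per-element estimate $e^{-s\,n_{T,\mathcal{U}}(E)}\le \lambda^{-t}\operatorname{diam}(E)^{t}$ with $t=s/\log L$, and the handling of the degenerate cases ($s>0$ automatic since $s>\log L\cdot H_d(C)\ge 0$, zero-diameter elements, $H_d(C)=+\infty$) are all sound, and together they give $h^B_{\mathcal{U}}(T,C)\le \log L\cdot H_d(C)$ for every open cover $\mathcal{U}$, hence the lemma. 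One point of comparison: the paper itself supplies no proof of this statement, quoting it verbatim as \cite[Theorem 1]{Mi-add}; your Carath\'eodory-type comparison of the Hausdorff gauge with Bowen's gauge $n_{T,\mathcal{U}}$ is essentially the standard argument behind that cited theorem, so you have in effect reconstructed Misiurewicz's proof rather than found a different route.
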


The following result is \cite[Lemma 5.4]{CHYZ}.
\begin{lem} \label{nche2} Let $(X,T)$ be a t.d.s. with metric $d$.
If there exist $\epsilon>0$ and $L>1$ such that $d(Tx,Ty)\geq
Ld(x,y)$ whenever $d(x,y)<\epsilon$, then
$$H_d(C)\le {h^{B}(T,C)\over\log L}$$ for any subset $C\subseteq
X$.
\end{lem}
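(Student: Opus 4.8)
The plan is to compare, for a fixed open cover of small mesh, the ``dynamical size'' $n_{T,\mathcal{U}}(E)$ that governs the dimensional entropy with the genuine diameter of $E$, exploiting the expansion hypothesis to turn the one into the other. Fix $\mathcal{U}\in\mathcal{C}^o_X$ with $||\mathcal{U}||<\epsilon$. The decisive geometric estimate I would establish first is that for every $E\subseteq X$ with $n:=n_{T,\mathcal{U}}(E)\ge 1$ one has $\text{diam}(E)\le||\mathcal{U}||\,L^{-(n-1)}$. Indeed, if $n_{T,\mathcal{U}}(E)=n$ then $T^iE$ is contained in a member of $\mathcal{U}$, hence $\text{diam}(T^iE)\le||\mathcal{U}||<\epsilon$, for $0\le i\le n-1$; applying the hypothesis $d(Tx,Ty)\ge Ld(x,y)$ successively along the orbit of any pair $x,y\in E$ (legitimate precisely because every intermediate distance $d(T^ix,T^iy)\le\text{diam}(T^iE)$ stays below the threshold $\epsilon$) yields $d(T^{n-1}x,T^{n-1}y)\ge L^{n-1}d(x,y)$, and since $d(T^{n-1}x,T^{n-1}y)\le||\mathcal{U}||$ the claim follows by taking the supremum over $x,y$. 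The same argument forces $\text{diam}(E)=0$ when $n=+\infty$.

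With this in hand I would convert a dimensional-entropy cover into a Hausdorff cover by matching exponents. Fix $\lambda>h^B_{\mathcal{U}}(T,C)$, so that $m_{T,\mathcal{U}}(C,\lambda)=0$; since $m_{T,\mathcal{U}}(C,\lambda,k)$ increases to this limit and is nonnegative, $m_{T,\mathcal{U}}(C,\lambda,k)=0$ for every $k$. Set $t=\lambda/\log L$. Given any $k\ge1$ and any $\mathcal{E}\in\mathfrak{C}(T,\mathcal{U},C,k)$, the condition $\mathcal{E}\succeq\mathcal{U}_0^{k-1}$ means each $E\in\mathcal{E}$ lies in a member of $\mathcal{U}_0^{k-1}$, whence $n_{T,\mathcal{U}}(E)\ge k$ and, by the estimate above, $\text{diam}(E)\le||\mathcal{U}||\,L^{-(k-1)}=:\delta_k$; thus $\mathcal{E}$ is a countable cover of $C$ of mesh at most $\delta_k$. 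The choice $t\log L=\lambda$ is engineered exactly so that $\text{diam}(E)^t\le(||\mathcal{U}||L)^t L^{-t\,n_{T,\mathcal{U}}(E)}=(||\mathcal{U}||L)^t e^{-\lambda n_{T,\mathcal{U}}(E)}$, whence $\sum_{E\in\mathcal{E}}\text{diam}(E)^t\le(||\mathcal{U}||L)^t\,m(T,\mathcal{U},\mathcal{E},\lambda)$.

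Taking the infimum over $\mathcal{E}\in\mathfrak{C}(T,\mathcal{U},C,k)$ then gives $H_d^{t,\delta_k}(C)\le(||\mathcal{U}||L)^t\,m_{T,\mathcal{U}}(C,\lambda,k)=0$. Letting $k\to+\infty$ forces $\delta_k\to0$, so $H_d^t(C)=0$ and therefore $H_d(C)\le t=\lambda/\log L$. Finally, letting $\lambda\downarrow h^B_{\mathcal{U}}(T,C)$ and using $h^B_{\mathcal{U}}(T,C)\le h^B(T,C)$ yields $H_d(C)\le h^B(T,C)/\log L$, as desired. The degenerate cases are immediate: $C=\emptyset$ is trivial, and if $h^B_{\mathcal{U}}(T,C)=0$ the argument above gives $H_d^t(C)=0$ for every $t>0$, so $H_d(C)=0$.

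I expect the main obstacle to be the geometric estimate in the first paragraph, namely verifying that the local expansion may be iterated all the way along the orbit segment of length $n_{T,\mathcal{U}}(E)$. This hinges on the mesh condition $||\mathcal{U}||<\epsilon$, which guarantees that every intermediate distance $d(T^ix,T^iy)$ remains below the threshold $\epsilon$ at which the hypothesis $d(Tx,Ty)\ge Ld(x,y)$ is available; without that control the chained inequality would break. Once this is secured, the remainder is a bookkeeping exercise in which the single identity $t=\lambda/\log L$ transfers the dynamical covering cost $e^{-\lambda n_{T,\mathcal{U}}(E)}$ into the geometric cost $\text{diam}(E)^t$ exactly.
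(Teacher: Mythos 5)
Your proof is correct and complete. Note that the paper itself gives no proof of this lemma but quotes it from \cite[Lemma 5.4]{CHYZ}, and your argument --- iterating the local expansion along the orbit segment of length $n_{T,\mathcal{U}}(E)$ to get $\mathrm{diam}(E)\le \|\mathcal{U}\|\,L^{-(n_{T,\mathcal{U}}(E)-1)}$ once $\|\mathcal{U}\|<\epsilon$, then matching exponents via $t=\lambda/\log L$ to convert $m_{T,\mathcal{U}}(C,\lambda,k)=0$ into $H_d^{t,\delta_k}(C)=0$ with $\delta_k\to 0$ --- is precisely the standard route of the cited source, with the delicate points (the mesh condition legitimizing the chained expansion, $\lambda>h^B_{\mathcal{U}}(T,C)\ge 0$ ensuring $t>0$ so the $n_{T,\mathcal{U}}(E)=+\infty$ terms contribute zero, and the degenerate case $C=\emptyset$) all correctly attended to.
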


\begin{prop}\label{L-exapnsive} Let
$\mathbb{T}=\mathbb{R}/\mathbb{Z}$ be the unit circle of complex
plane with the metric $$d(e^{2\pi i x},e^{2\pi i y})=\inf_{k\in
\mathbb{Z}}|x-y-k|$$ for any $x,y\in \mathbb{R}$. For $m\in
\mathbb{N}$ with $m\ge 2$, $T_m$ is defined by $T_m(z)=z^m$ for
$z\in \mathbb{T}$. Then
\begin{enumerate}
\item $H_d(C)={h^{B}(T_m, C)\over\log m}$ for any
subset $C\subseteq \mathbb{T}$.

\item $(\mathbb{T},T_m)$ is D-hereditarily lowerable.
\end{enumerate}
\end{prop}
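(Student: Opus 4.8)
The plan is to establish part (1) first and then derive part (2) as an essentially immediate consequence. For part (1), the key observation is that the map $T_m(z) = z^m$ on the circle is expanding: I would verify that $T_m$ is Lipschitz continuous with Lipschitz constant exactly $L = m$, since in the chosen metric $d(e^{2\pi i x}, e^{2\pi i y}) = \inf_k |x - y - k|$ we have $d(T_m z, T_m w) = \inf_k |mx - my - k| \le m \inf_k |x-y-k| = m\, d(z,w)$. This gives one of the two inequalities via Lemma \ref{nche1}, namely $H_d(C) \ge h^B(T_m, C)/\log m$ for every $C \subseteq \mathbb{T}$. For the reverse inequality I would apply Lemma \ref{nche2}: I need to find $\epsilon > 0$ and $L > 1$ so that $d(T_m x, T_m y) \ge L\, d(x,y)$ whenever $d(x,y) < \epsilon$. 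Choosing $\epsilon = \frac{1}{2m}$ (or any threshold below $\frac{1}{2m}$) guarantees that when the circle-distance between two points is smaller than $\epsilon$, the angular gap is not wrapped or folded by the $m$-fold covering, so the distance is genuinely multiplied by $m$, giving $d(T_m x, T_m y) = m\, d(x,y)$ locally. Lemma \ref{nche2} then yields $H_d(C) \le h^B(T_m, C)/\log m$, and combining the two inequalities proves the equality in (1).

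For part (2), the idea is to transport the classical fractal-geometry result, Proposition \ref{prop2.1}, across the equality established in (1). Let $K \subseteq \mathbb{T}$ be a non-empty Souslin subset and let $0 \le h \le h^B(T_m, K)$. By part (1), $h^B(T_m, K) = (\log m)\, H_d(K)$, so the target entropy value $h$ corresponds to the Hausdorff-dimension value $s := h/\log m$, which satisfies $0 \le s \le H_d(K)$. Since $\mathbb{T}$ embeds in $\R^2$ (or is bi-Lipschitz to a subset of $\R$ locally) and Souslin sets are preserved under continuous maps between complete spaces, $K$ is a Souslin subset of a Euclidean space to which Proposition \ref{prop2.1} applies. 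For $s < H_d(K)$ it furnishes a compact subset $K_s \subseteq K$ with $H_d(K_s) = s$; applying part (1) again to $K_s$ gives $h^B(T_m, K_s) = (\log m)\, H_d(K_s) = (\log m)\, s = h$, so $K_s$ is the desired subset.

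The one boundary case that part (2) must handle separately is $s = H_d(K)$, equivalently $h = h^B(T_m, K)$, since Proposition \ref{prop2.1} produces subsets only for $0 \le s < H_d(K)$; but here one simply takes $K_h = K$ itself, which is already a subset of $K$ with $h^B(T_m, K_h) = h$. Thus for every admissible $h$ a suitable $K_h \subseteq K$ exists, which is exactly the statement that $K$ is D-lowerable, and since $K$ was an arbitrary Souslin subset, $(\mathbb{T}, T_m)$ is D-hereditarily lowerable.

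I expect the main obstacle to be the careful verification of the local expanding estimate needed for Lemma \ref{nche2}. The global Lipschitz bound for Lemma \ref{nche1} is routine, but the reverse direction requires confirming that the distance-multiplication $d(T_m x, T_m y) = m\, d(x,y)$ holds genuinely once $d(x,y)$ is below the threshold $\epsilon$, which amounts to checking that the infimum over $k \in \Z$ defining the circle metric is attained by the same winding number before and after applying $T_m$. Getting the correct threshold $\epsilon$ and ruling out the wrap-around case is the delicate step; the remaining arguments are direct applications of the cited lemmas and Proposition \ref{prop2.1}.
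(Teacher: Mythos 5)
Your proposal is correct and takes essentially the same route as the paper: part (1) from Lemmas \ref{nche1} and \ref{nche2} with $L=m$ (your verification of the expansion threshold $\epsilon=\frac{1}{2m}$ is exactly the detail the paper leaves implicit), and part (2) by transporting Proposition \ref{prop2.1} through the equality of (1), using that $d$ is Lipschitz equivalent to the Euclidean metric on $\mathbb{T}\subseteq\R^2$. Your explicit treatment of the boundary case $h=h^B(T_m,K)$, where one takes $K_h=K$ itself, is a sound minor addition that the paper's two-line proof omits.
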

\begin{proof} (1) follows from Lemmas \ref{nche1} and \ref{nche2} by
setting $L=m$.  Since the metric $d$ and the Euclidean metric on
$\mathbb{T}$ are Lipschitz equivalent,   (2) follows from (1) and
Proposition \ref{prop2.1}.
\end{proof}

\begin{rem} Proposition \ref{L-exapnsive} (1) appeared firstly in \cite[Proposition III.1]{Fu0}.
\end{rem}

\begin{rem} Recall that in \cite[Remark 5.13]{YZ} the authors showed that if $(X,T)$ is a
t.d.s. and $K\subset X$ is a compact subset, then there is a
countable subset $K'\subset K$ (the derived set of which has at most
one limit point) with $h(T,K')=h(T,K)$. Weiss showed us a proof that
when $(X,T)$ is minimal and $X$ is infinite then there exists a countable subset $K$
with a unique limit point such that $h(T,K)=h(T,X)$. In fact, this
can also be obtained by \cite[Theorems 4.2 and 5.7]{YZ}.
\end{rem}

\section{Negative answers to Question \ref{q2}}

In this section, we shall construct a minimal lowerable t.d.s. which
is not hereditarily lowerable. First we give a lowerable t.d.s.
which is not hereditarily lowerable and then we make it minimal. We
remark that the example we get has infinite entropy, and it is not
hard to construct an example which has infinite entropy and at the
same time is hereditarily lowerable.

\subsection{A general example} First we construct an example (not necessarily minimal) which
is lowerable and not hereditarily lowerable. In the next subsection
we will modify it such that it is minimal. To do this, we need the
following lemma.

\begin{lem} \label{prod0}
Let $(X_i, T_i), i\in \N$ be a t.d.s. and $(Y, S)=
\prod\limits_{i\in \N} (X_i, T_i)$ with $\pi_i$ the projection from
$Y$ to $X_i, i\in \N$. Then
\begin{enumerate}

\item $h (T_1^n, W)= n h (T_1, W)$ for each closed $W\subseteq X_1$
 and any $n\in \N$;

\item $h (T_j, \pi_j K)\le h (S, K)\le \sum_{i\in \N} h(T_i, \pi_i K)$
for each closed $K\subseteq Y$ and any $j\in \N$.
\end{enumerate}
\end{lem}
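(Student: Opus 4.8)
The plan is to prove the two statements using the structural results on covering entropy established earlier in the excerpt, relying on the product structure and the behaviour of entropy under factor maps.

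For part (1), I would establish $h(T_1^n, W) = n\,h(T_1, W)$ for closed $W \subseteq X_1$ and $n \in \N$. The key observation is that, unlike dimensional entropy (for which Proposition \ref{06.02.28}(4) gives the power rule $h^B(T^m, K) = m\,h^B(T,K)$ for \emph{arbitrary} subsets), here we work with covering entropy $h(T,\cdot)$ but are restricted to \emph{closed} subsets. Since $W$ is closed (hence compact), one has $h(T_1, W) = h^B(T_1, W)$ — this is guaranteed because for compact $W$ the covering and dimensional entropies agree, and indeed the excerpt notes $h(T,\overline{K}) = h(T,K)$ and that for covers $\mathcal{U}_n$ with $\|\mathcal{U}_n\| \to 0$ one has $h_{\mathcal{U}_n}(T,K) \to h(T,K)$. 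The cleanest route I would take is to verify directly at the level of open covers: for $\mathcal{U} \in \mathcal{C}^o_{X_1}$, the cover $(\mathcal{U})_0^{n-1} = \bigvee_{i=0}^{n-1} T_1^{-i}\mathcal{U}$ and its iterates under $T_1^n$ reproduce, up to reindexing, the Cesàro averages defining $h_{\mathcal{U}}(T_1, W)$, giving $h_{(\mathcal{U})_0^{n-1}}(T_1^n, W) = n\,h_{\mathcal{U}}(T_1, W)$; taking the supremum over $\mathcal{U}$ (and using that covers of small diameter are cofinal) yields the claim. This mirrors exactly the argument for Proposition \ref{06.02.28}(4) transferred from the dimensional to the covering setting, which is legitimate precisely because closedness forces the two entropies to coincide.

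For part (2), the lower bound $h(T_j, \pi_j K) \le h(S, K)$ is immediate from Proposition \ref{090319}(1) applied to the factor map $\pi_j: (Y,S) \to (X_j, T_j)$, since $h(S,K) \ge h(T_j, \pi_j K)$. The upper bound $h(S,K) \le \sum_{i \in \N} h(T_i, \pi_i K)$ is the substantive part. I would first treat finite products: for $Y_n = \prod_{i=1}^n X_i$ with product map $S_n$, one uses that $K \subseteq \prod_{i=1}^n \pi_i K$ (as $K$ is contained in the product of its projections) together with the monotonicity of $h(S_n, \cdot)$ in the set variable and the additivity statement from Proposition \ref{090319}(2), which gives $h(S_n, \prod_{i=1}^n \pi_i K) = \sum_{i=1}^n h(T_i, \pi_i K)$. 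Here again closedness of $K$ (hence of each $\pi_i K$, since continuous images of compacta are compact) is what makes $h(S_n, \cdot)$ well-behaved and lets the product formula apply.

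The main obstacle, and the step I would handle most carefully, is passing from finite to countably infinite products. One must control the entropy of $S$ on $Y$ by the finite approximations $S_n$ on $Y_n$. The standard device is that open covers of $Y$ of small diameter can be refined by covers pulled back from a finite coordinate block $Y_n$ (since the product metric on $Y$ is dominated by finitely many coordinates up to a small error), so $h_{\mathcal{U}}(S, K)$ is approximated by $h(S_n, \rho_n K)$ where $\rho_n: Y \to Y_n$ is the projection; letting $n \to \infty$ and summing gives the bound. I would need to verify that $h(S_n, \rho_n K) \le \sum_{i=1}^n h(T_i, \pi_i K)$ uniformly and that the tail contributes nonnegatively but is captured by extending the finite sum to the full series $\sum_{i \in \N} h(T_i, \pi_i K)$ (which may be $+\infty$, in which case the inequality is trivial). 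The interchange of the limit in $n$ with the supremum over covers defining $h(S, K)$ is the one place where I would be most attentive to justify, using the cofinality of small-diameter covers noted in the Preliminary section.
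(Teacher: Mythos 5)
Your overall skeleton is sound and close to the paper's. The lower bound in (2) is exactly Proposition \ref{090319}(1), and your finite-block approximation is the same mechanism the paper uses, except that the paper implements it directly with Bowen spanning sets and the weighted product metric $\rho=\sum_{i=1}^{\infty}2^{-i}d_i$: choosing $N(\epsilon)$ with $\sum_{i>N(\epsilon)}2^{-i}<\epsilon/2$ gives $r_n(\rho,S,\epsilon,K)\le \prod_{i=1}^{N(\epsilon)}r_n(d_i,T_i,\frac{\epsilon}{2},\pi_iK)$ in one stroke, and taking $\limsup$, summing, and letting $\epsilon\to 0+$ yields the upper bound with no cover/limit interchange to justify. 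For (1) the paper simply cites the proof of \cite[Theorem 7.10(i)]{Wa}; your direct computation $h_{\mathcal{U}_0^{n-1}}(T_1^n,W)=n\,h_{\mathcal{U}}(T_1,W)$, together with monotonicity of $N(\mathcal{U}_0^{k-1},W)$ in $k$ to pass from the subsequence $\{nm\}$ to the full $\limsup$, is a perfectly good substitute and in fact never uses closedness of $W$.

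There are, however, two genuine justification errors. First, your claim that covering and dimensional entropy agree on compact sets is false: by Proposition \ref{06.02.28}(2) one has $h^B(T,E)=0$ for every countable set, whereas the results of \cite{YZ} quoted in Section 3 show that a countable compact subset can carry full covering entropy $h(T,K')=h(T,K)>0$; in general only $h(T,K)\ge h^B(T,K)$ holds, with equality guaranteed for $K=X$ (or compact invariant sets). So the proposed ``transfer'' of Proposition \ref{06.02.28}(4) from $h^B$ to $h$ is not legitimate --- fortunately your direct cover argument makes it dispensable. Second, Proposition \ref{090319}(2) is a statement about the product of the \emph{full} systems and cannot be quoted for $h\bigl(S_n,\prod_{i=1}^n\pi_iK\bigr)$: the sets $\pi_iK$ are compact but in general not $T_i$-invariant, hence not subsystems, and the asserted equality is at best unproven (the $\ge$ direction runs into the non-additivity of $\limsup$). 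What you need, and all you need, is the subadditive inequality $h\bigl(S_n,\prod_{i=1}^n\pi_iK\bigr)\le \sum_{i=1}^n h(T_i,\pi_iK)$, which must be proved rather than cited --- one line with spanning sets, since a product of $(n,\epsilon)$-spanning sets is $(n,\epsilon)$-spanning for the product in the max metric; this is precisely the computation the paper performs. With these two repairs your proposal is correct.
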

\begin{proof}
(1) It is known, see for example the proof of \cite[Theorem 7.10
(i)]{Wa}.

(2) Let $K\subseteq Y$ be closed and $j\in \N$. By Proposition
\ref{090319} (1) clearly $h (T_j, \pi_j K)\le h (S, K)$. For the
other direction, without loss of generality we assume
$\text{diam} (X_i)\le 1$ with $d_i$ a compatible metric on $X_i$ for
each $i\in\N$. Let $\rho$ be the metric on $Y$ given by
\begin{equation*}
\rho ((x_1, x_2, \cdots), (y_1, y_2, \cdots))= \sum_{i= 1}^\infty
\frac{d_i (x_i, y_i)}{2^i}.
\end{equation*}
For each $\epsilon> 0$ we can select $N (\epsilon)\in \N$ with
$\sum_{i= N (\epsilon)+ 1}^\infty \frac{1}{2^i}<
\frac{\epsilon}{2}$, thus
\begin{equation*}
r_n (\rho, S, \epsilon, K)\le \prod_{i= 1}^{N (\epsilon)} r_n (d_i,
T_i, \frac{\epsilon}{2}, \pi_i K),
\end{equation*}
which implies
\begin{equation*}
r(\rho, S, \epsilon, K)\le \sum_{i= 1}^{N (\epsilon)} r (d_i, T_i,
\frac{\epsilon}{2}, \pi_i K)\le \sum_{i\in \N} r(d_i, T_i,
\frac{\epsilon}{2}, \pi_i K),
\end{equation*}
and so $h (S, K)\le \sum_{i\in \N} h (T_i, \pi_i K)$. This finishes
our proof.
\end{proof}

Thus, we have

\begin{prop} \label{090507}
Let $(X, T)$ be a t.d.s. with topological entropy finite but
positive. Then $(X^\infty, S)= \prod_{n\in \N} (X, T^n)$ is a
lowerable t.d.s. which is not hereditarily lowerable.
\end{prop}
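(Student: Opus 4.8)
The plan is to treat the two assertions separately, using only Lemma \ref{prod0} together with the fact from \cite{HYZ1} that every t.d.s. of finite entropy is lowerable. Write $c = h(T, X)$, so $0 < c < \infty$. First I would record that $h(S, X^\infty) = +\infty$: by Lemma \ref{prod0}(2) one has $h(S, X^\infty) \ge h(T^j, X) = j c$ for every $j \in \N$ (the last equality by Lemma \ref{prod0}(1)), and letting $j \to \infty$ gives $+\infty$. Thus to prove lowerability I must realize every value in $[0, +\infty]$ as $h(S, K)$ for closed $K \subseteq X^\infty$.

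For lowerability I would realize each finite value $h \in [0, +\infty)$ inside a single coordinate. Fix a point $x_0 \in X$ and choose $j \in \N$ with $j \ge h/c$. Since $(X, T)$ has finite entropy it is lowerable by \cite{HYZ1}, so there is a closed $V \subseteq X$ with $h(T, V) = h/j \in [0, c]$. Put $K = \{x_0\} \times \cdots \times \{x_0\} \times V \times \{x_0\} \times \cdots$, with $V$ in the $j$-th coordinate; this set is closed. By Lemma \ref{prod0}(2) the upper bound reads $h(S, K) \le \sum_i h(T^i, \pi_i K) = h(T^j, V)$, since every other projection is a single point and so has zero entropy, while the lower bound reads $h(S, K) \ge h(T^j, \pi_j K) = h(T^j, V)$; and $h(T^j, V) = j\, h(T, V) = h$ by Lemma \ref{prod0}(1). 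The remaining value $+\infty$ is realized by $K = X^\infty$.

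For the failure of hereditary lowerability the key idea is to exhibit one closed subset whose closed subsets realize only the two extreme entropies $0$ and $+\infty$. I would take the diagonal $\Delta = \{(x, x, x, \dots) : x \in X\}$, which is closed and homeomorphic to $X$ via $x \mapsto (x, x, \dots)$; consequently every closed subset of $\Delta$ has the form $\Delta_C = \{(x, x, \dots) : x \in C\}$ for a unique closed $C \subseteq X$, and $\pi_i \Delta_C = C$ for all $i$. Then Lemma \ref{prod0} pins down $h(S, \Delta_C)$ by a dichotomy: the lower bound gives $h(S, \Delta_C) \ge h(T^j, C) = j\, h(T, C)$ for every $j$, while the upper bound gives $h(S, \Delta_C) \le \sum_j h(T^j, C) = \sum_j j\, h(T, C)$. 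Hence $h(S, \Delta_C) = 0$ when $h(T, C) = 0$ and $h(S, \Delta_C) = +\infty$ when $h(T, C) > 0$. In particular $h(S, \Delta) = h(S, \Delta_X) = +\infty$ because $c > 0$, yet no closed subset of $\Delta$ attains any value in $(0, +\infty)$; for instance the value $1 \in [0, h(S, \Delta)]$ is missed. Thus $\Delta$ is not lowerable, and $(X^\infty, S)$ is not hereditarily lowerable.

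The only genuinely delicate point is the choice of the right witnessing set; once the diagonal is in hand, everything reduces to the two inequalities of Lemma \ref{prod0} and the scaling identity $h(T^j, C) = j\, h(T, C)$. I expect the main obstacle to be verifying cleanly that every closed subset of $\Delta$ is of the form $\Delta_C$ and that the crude lower bound $\sup_j j\, h(T,C)$ already forces infinite entropy the moment $h(T, C) > 0$; the complementary collapse of the upper bound to $0$ is exactly what opens the gap $(0,+\infty)$.
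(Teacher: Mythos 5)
Your proposal is correct and takes essentially the same route as the paper: the same diagonal set (the paper's $E$, your $\Delta$) with the dichotomy $h(S,\Delta_C)\in\{0,+\infty\}$ forced by the two inequalities of Lemma \ref{prod0}, and lowerability via the finite-entropy lowerability theorem of \cite{HYZ1} combined with a constant tail. The only cosmetic difference is that you realize a finite value $h$ in a single coordinate, choosing $V$ with $h(T^j,V)=j\,h(T,V)=h$, whereas the paper applies the same theorem to the finite block $(X^n, T\times T^2\times\cdots\times T^n)$ before appending the fixed tail; both steps are equivalent in substance.
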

\begin{proof}
Let $E= \{(x, x, \cdots): x\in X\}\subseteq X^\infty$. Now we claim
that the subset $E$ in $(X^\infty, S)$ is not lowerable (and so
t.d.s. $(X^\infty, S)$ is not hereditarily lowerable) by proving
that each closed subset $K$ of $E$ has either infinite topological
entropy or zero topological entropy.

Let $\pi_i: (X^\infty, S)\rightarrow (X, T^i)$ be the factor map of
the $i$-th projection map, $i\in \N$. We shall prove that if $h (T,
\pi_1 K)> 0$ then $h (S, K)= \infty$ and if $h (T, \pi_1 K)= 0$ then
$h (S, K)= 0$. In fact, if $h (T, \pi_1 K)= 0$ then by Lemma
\ref{prod0} one has
\begin{equation*}
h (S, K)\le \sum_{i\in \N} h (T^i, \pi_i K)= \sum_{i\in \N} i h(T,
\pi_i K)= \sum_{i\in \N} i h(T, \pi_1 K)= 0.
\end{equation*}
Now if $h (T, \pi_1 K)> 0$ then by Lemma \ref{prod0} one has
$$h (S,
K)\ge h(T^n, \pi_n K)= n h (T, \pi_n K)= n h(T, \pi_1 K)$$
for each
$n\in \N$, which implies $h (S, K)= \infty$.

Now we shall finish our proof by claiming that $(X^\infty, S)$ is
lowerable. In fact, for each $0\le h< \infty$ we let $n\in \N$ with
$h (S_n, X^n)> h$, where $S_n= T\times T^2\times \cdots \times T^n$.
Note that \cite[Theorem 4.4]{HYZ1} states that each t.d.s.
with finite entropy must be lowerable, whereas, clearly $(X^n, S_n)$
is a t.d.s. with finite entropy, thus there exists closed
$K_h\subseteq X^n$ with $h (S_n, K_h)= h$. Now for each $x_0\in X$
we put $K^*_h= \{(k_1, \cdots, k_n, x_0, \cdots): (k_1, \cdots,
k_n)\in K_h\}$, it is easy to check that $h (S, K^*_h)= h$, this
ends the proof.
\end{proof}

We should remark that \cite[Theorem 6.1]{HYZ1} states that
\begin{center}
{\it Let $(X,T)$ be a t.d.s. Then, for any compact $K\subseteq X$
with $h (T, K)> 0$,\\ there is a countable infinite compact subset
$K_\infty\subseteq K$ with $h (T, K_\infty)= 0$.}
\end{center}

\noindent Whereas, \cite[Theorem 5.9 and Remark 5.13]{YZ} state that

\begin{center}
{\it Let $(X, T)$ be a t.d.s. Then for any compact $K\subseteq X$
there is\\
 a countable compact subset $K'\subseteq K$ with $h (T,
K')= h (T, K)$.}
\end{center}

\noindent Thus, the above Proposition \ref{090507} tells us that
these are the best results we may obtain. In view of this, we
restate our Question \ref{q2} as

\medskip

\noindent {\bf Question 2'}: {\it Is there a t.d.s. with finite
entropy which is not hereditarily lowerable?}

\medskip

It seems to us that a t.d.s. $(X,T)$ is not hereditarily lowerable
if it has an ergodic measure with infinite entropy.

\subsection{A minimal example} After we finished the construction in
the previous subsection, Glasner asked if there is a minimal t.d.s.
with the property. We will show that the example in the previous
subsection can be made minimal. Recall that a t.d.s. $(Y, T)$ is
called {\it proximal orbit dense}, or a {\it POD} system if $(Y,T)$
is totally minimal and whenever $x,y\in Y$ with $x\not=y$, then for
some $n\not=0$, $T^ny$ is proximal to x. An interesting property of
POD is that (see \cite[Corollary 3.5]{KN}):

\begin{prop}\label{KN}  Let $(X, T)$ be POD and consider integers $k_i\not=0, i = 1, \cdots, n$
with $k_i\not=\pm k_j$ for $i\not=j$. Then \begin{enumerate}
\item $(X\times \cdots \times X, T^{k_1}\times \cdots\times T^{k_n})$ is minimal.
\item The only factors of the flow in (1) are the obvious direct
factors. \end{enumerate} If, in addition, $(X, T)$ is not isomorphic
to $(X, T^{-1})$ then (1) and (2) hold for any $k_i\not=0, i = 1,
\cdots, n$ with $k_i\not=k_j$ for $i\not=j$.
\end{prop}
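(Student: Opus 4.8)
The plan is to prove (1) and then extract (2) from the resulting structure. Since POD entails total minimality, each single factor $(X,T^{k_i})$ is minimal. Hence if $M\subseteq X^n$ is any minimal set for $S:=T^{k_1}\times\cdots\times T^{k_n}$, then each coordinate projection $\pi_i(M)$ is a nonempty closed $T^{k_i}$-invariant subset of $X$ and therefore equals $X$; thus $M$ projects onto every coordinate. Consequently, proving (1) reduces to verifying that $X^n$ is the \emph{unique} minimal set, i.e. that there is no proper minimal subset projecting onto all coordinates, which is the assertion that the minimal systems $(X,T^{k_1}),\dots,(X,T^{k_n})$ are mutually disjoint in the sense of Furstenberg. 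I would run this as an induction on $n$, the inductive step being to show that any minimal joining of the (already minimal) product over the first $n-1$ coordinates with $(X,T^{k_n})$ is the full product.

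The engine is the proximal-orbit-dense hypothesis, which I would exploit through the enveloping semigroup $E=E(X,T)$. Fixing a minimal left ideal $I\subseteq E$ and an idempotent in it, recall that two points of a minimal system are proximal exactly when some minimal idempotent identifies them, and that the POD condition says precisely that for all $x\neq y$ there exist an integer $m\neq 0$ and a minimal idempotent $v$ with $v(T^m y)=vx$. The key step is to use this to fatten the fibres of $M$: starting from a point $(a_1,\dots,a_n)\in M$, one applies elements of $E$ realised as limits of powers $S^{m}$ that restore the first $n-1$ coordinates to points proximal to $a_1,\dots,a_{n-1}$ while, by the arithmetic independence of the exponents, letting the last coordinate run through a dense subset of $X$. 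The hypothesis $k_i\neq\pm k_j$ for $i\neq j$ is exactly what prevents a nontrivial algebraic relation from forcing the coordinates to move in lockstep, so that proximality can collapse the already-placed coordinates without constraining the free one; this yields $\{(a_1,\dots,a_{n-1})\}\times X\subseteq M$ and, by minimality, $M=X^n$. The relaxation to $k_i\neq k_j$ under the extra hypothesis that $(X,T)$ is not conjugate to $(X,T^{-1})$ enters here: the only way $k_i=-k_j$ could produce a proper minimal joining is through an isomorphism $(X,T^{k_i})\cong(X,T^{-k_i})$, ultimately a conjugacy between $T$ and $T^{-1}$, which is now excluded.

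For (2), a factor of $(X^n,S)$ corresponds to a closed $S$-invariant equivalence relation $R\subseteq X^n\times X^n$. Using the minimality from (1) together with the same proximal analysis, I would argue that $R$ must split as a product of relations on the individual coordinates, each of which is either the full relation or the diagonal, so that the factor is an obvious direct factor. The ``diagonal'' identifications across two coordinates that would create a non-obvious factor again demand an isomorphism between $(X,T^{k_i})$ and $(X,T^{k_j})$, which is ruled out by $k_i\neq\pm k_j$ (respectively by $k_i\neq k_j$ together with $(X,T)\not\cong(X,T^{-1})$).

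I expect the main obstacle to be the decoupling step in the second paragraph: making precise, in the enveloping-semigroup language, that the distinct exponents allow one coordinate to be moved independently of the others while proximality pins the remaining ones down. Passing from $n=2$ to general $n$ compounds this, since one must check that the inductive joining genuinely remains a product at each stage rather than merely projecting onto its factors; and the bookkeeping of the sign condition $k_i\neq\pm k_j$ versus $k_i\neq k_j$, tied to conjugacy of $T$ with $T^{-1}$, is precisely where the hypotheses must be invoked with care.
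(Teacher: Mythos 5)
You should first be aware that the paper itself offers no proof of this proposition: it is quoted verbatim from Keynes and Newton \cite[Corollary 3.5]{KN}, so there is no in-paper argument to match your outline against, and your attempt must stand on its own. As a reconstruction of the joinings strategy behind that result it is headed in a reasonable direction (reduce minimality of the product to disjointness, induct on $n$, rule out graph joinings via the exponent condition), but as written it has genuine gaps. The central one is the ``decoupling'' step, which you yourself flag: you assert that elements of the enveloping semigroup realised as limits of powers $S^m$ can collapse the first $n-1$ coordinates via proximality \emph{while} the last coordinate runs through a dense set, and that $k_i\neq\pm k_j$ ``is exactly what prevents'' lockstep motion. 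No mechanism is given, and none is available at this level of generality: POD supplies, for each pair $x\neq y$, only \emph{some} $m\neq 0$ with $T^m y$ proximal to $x$, with no simultaneous control over several coordinates under a single sequence of powers of $S$. The argument that actually works (and is what Keynes--Newton carry out) is a dichotomy: a minimal subset of $X\times X$ invariant under $T^{k_i}\times T^{k_j}$ and projecting onto both factors is either all of $X\times X$ or forces an isomorphism between $(X,T^{k_i})$ and $(X,T^{k_j})$, after which an arithmetic/rigidity argument excludes such an isomorphism when $k_i\neq\pm k_j$. Your sketch asserts the conclusion $\{(a_1,\dots,a_{n-1})\}\times X\subseteq M$ without establishing this dichotomy, so the engine of part (1) is missing.

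Two further steps would fail as stated. First, your sign bookkeeping: you claim that an isomorphism $(X,T^{k})\cong(X,T^{-k})$ is ``ultimately a conjugacy between $T$ and $T^{-1}$.'' For a general homeomorphism this implication is false ($T^{k}\cong T^{-k}$ does not imply $T\cong T^{-1}$); making it valid here requires a POD-specific rigidity lemma about isomorphisms between powers, which is precisely part of what must be imported from \cite{KN} and cannot be waved through. Second, in part (2) you take for granted that each coordinate relation of a closed $S$-invariant equivalence relation is either the diagonal or the full relation -- but that is primeness of $(X,T^{k_i})$, a main theorem about POD systems in Keynes--Newton, not an observation; and the claim that an invariant closed equivalence relation on the product splits as a product of coordinate relations itself requires the disjointness machinery from (1) applied relatively, not just minimality. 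In short: the skeleton is the right one, but every load-bearing joint (the product-or-graph dichotomy, the power-rigidity lemma behind the $k_i\neq\pm k_j$ versus $k_i\neq k_j$ distinction, and primeness) is asserted rather than proved.
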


A special class of POD is
\begin{defn} A system $(X, T)$ is said to be doubly minimal if for all $x\in
X$ and $y\not\in \{T^nx\}_{-\infty}^\infty$, the orbit of $(x,y)$ is
dense in $X\times X$.
\end{defn}

The first example of non-periodic doubly minimal system was constructed in
\cite{King} in the symbolic dynamics.
Doubly minimal systems are
natural in the sense that: any ergodic system with zero entropy has
a uniquely ergodic model which is doubly minimal \cite{Weiss}.
 The
notion of disjointness between two t.d.s. was introduced in
\cite{Fu0} and it is easy to see that two minimal t.d.s. are
disjoint iff the product system is minimal \cite[Proposition 2.5]{AG}.

\medskip

\begin{prop} There is a minimal t.d.s. which is not hereditarily lowerable. \end{prop}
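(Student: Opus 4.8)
The plan is to construct the minimal example by combining the product construction of Proposition~\ref{090507} with a sufficiently rich minimal base system, using Proposition~\ref{KN} to guarantee minimality. Concretely, I would start from a minimal t.d.s.\ $(X,T)$ with finite but positive topological entropy that is moreover POD and not isomorphic to $(X,T^{-1})$; such systems exist (for instance one can arrange a doubly minimal system with these properties, noting that doubly minimal systems are POD). I then form $(X^\infty, S) = \prod_{n\in\N}(X,T^n)$ exactly as before, where $S = \prod_{n\in\N} T^n$. The point of requiring $(X,T)$ to be POD (with the asymmetry $(X,T)\not\cong(X,T^{-1})$) is that the exponents $k_n = n$ are pairwise distinct and nonzero, so Proposition~\ref{KN}(1) applies to every finite subproduct $(X^n, T\times T^2\times\cdots\times T^n)$ and forces it to be minimal.

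The key steps, in order, are as follows. First I would verify that the infinite product $(X^\infty, S)$ is minimal: since each finite factor $(X^n, S_n)$ with $S_n = T\times\cdots\times T^n$ is minimal by Proposition~\ref{KN}(1), and an inverse limit (equivalently, an infinite product whose every finite projection is minimal) of minimal systems under the natural projections is minimal, one concludes $(X^\infty,S)$ is minimal. Second, I would observe that the entropy computations establishing non-hereditary-lowerability in the proof of Proposition~\ref{090507} are unaffected by minimality of the base: the diagonal set $E=\{(x,x,\dots):x\in X\}$ together with Lemma~\ref{prod0} still shows that every closed $K\subseteq E$ has $h(S,K)\in\{0,+\infty\}$, because $h(T,\pi_1 K)>0$ forces $h(S,K)\ge n\,h(T,\pi_1 K)$ for all $n$, while $h(T,\pi_1 K)=0$ forces $h(S,K)=0$. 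Hence $E$ is not lowerable and $(X^\infty,S)$ is not hereditarily lowerable. Third, lowerability of $(X^\infty,S)$ follows verbatim from the argument in Proposition~\ref{090507}: for each finite $h$ pick $n$ with $h(S_n,X^n)>h$, apply \cite[Theorem 4.4]{HYZ1} to the finite-entropy system $(X^n,S_n)$ to get a closed $K_h\subseteq X^n$ with $h(S_n,K_h)=h$, and lift it to $K_h^*\subseteq X^\infty$ by fixing a point $x_0$ in the remaining coordinates.

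The main obstacle I anticipate is the very first step: arranging a minimal system $(X,T)$ with finite positive entropy that simultaneously satisfies the POD hypothesis (or at least the distinct-exponents minimality conclusion of Proposition~\ref{KN}) and the asymmetry $(X,T)\not\cong(X,T^{-1})$. The asymmetry is what lets us use the exponents $k_n=n$ (all distinct but not requiring $k_i\neq\pm k_j$), which is essential because the exponents in $S=\prod T^n$ are $1,2,3,\dots$ and these are not pairwise distinct in absolute value only trivially---in fact they \emph{are} pairwise distinct and positive, so the plain version of Proposition~\ref{KN} would require $k_i\neq\pm k_j$, i.e.\ $i\neq j$ and $i\neq -j$, which holds automatically for positive integers; thus the asymmetry hypothesis may in fact be dispensable and only the POD property together with distinct positive exponents is needed. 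I would therefore first check carefully whether the bare POD assumption already yields minimality of every $(X^n,S_n)$, and only invoke the stronger asymmetry clause if the sign condition genuinely interferes. Securing a concrete such base system---drawing on the doubly minimal / POD examples referenced in the excerpt (e.g.\ \cite{King}, and the fact that positive-entropy minimal models are abundant)---is the crux; once it is in hand, minimality of the product and the entropy dichotomy are routine consequences of Proposition~\ref{KN} and Lemma~\ref{prod0}.
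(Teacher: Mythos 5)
There is a genuine gap, and it sits exactly at the step you yourself flag as the crux: the existence of a minimal POD system $(X,T)$ with \emph{finite positive} entropy. Your proposed source of such systems --- ``one can arrange a doubly minimal system with these properties'' --- does not work, because non-periodic doubly minimal systems have zero topological entropy. This is implicit in the very results you cite: King's example \cite{King} has zero entropy, and Weiss's theorem \cite{Weiss} produces doubly minimal models precisely for \emph{zero-entropy} ergodic systems (indeed, the paper's remark after the proposition records Weiss's result that recurrence of all pairs under $T\times T$ forces zero entropy, which is the mechanism behind this). No positive-entropy POD system is exhibited in the paper or its references, so your base system is not available, and with a zero-entropy base the diagonal argument of Proposition~\ref{090507} collapses: every closed $K\subseteq E$ would have $h(T,\pi_1 K)=0$ and hence $h(S,K)=0$, and the product would not even fail to be hereditarily lowerable. (Your side observation about the sign condition is correct but moot: for the exponents $1,2,\dots,n$ one automatically has $k_i\neq\pm k_j$, so the plain POD hypothesis in Proposition~\ref{KN} suffices and no asymmetry $(X,T)\not\cong(X,T^{-1})$ is needed --- this is also how the paper uses Proposition~\ref{KN}.)

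The paper closes this gap with an extension trick that your outline is missing. It starts from King's non-periodic doubly minimal (hence POD, after checking total minimality via weak mixing) strictly ergodic system $(Y_1,S_1)$, which has zero entropy, and then invokes Downarowicz--Lacroix \cite{DL} to produce a \emph{minimal almost 1-1 extension} $(Y,S)$ of $(Y_1,S_1)$ with finite positive entropy. Minimality of $(Y_1,S_1)\times(Y_1,S_1^2)\times\cdots\times(Y_1,S_1^n)$ comes from Proposition~\ref{KN} applied downstairs, and is then lifted to $(Y,S)\times(Y,S^2)\times\cdots\times(Y,S^n)$ using the fact that disjointness (equivalently, minimality of products of minimal systems) is preserved under almost 1-1 extensions \cite[Theorems 2.5 and 2.6]{AG}; passing to the inverse limit gives minimality of $\prod_{i\in\N}(Y,S^i)$, and Proposition~\ref{090507} then applies with the positive-entropy base $(Y,S)$. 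The remainder of your argument --- the entropy dichotomy on the diagonal via Lemma~\ref{prod0} and the lowerability of the product via finite subproducts --- agrees with the paper and is fine; but without the almost 1-1 extension step (or some other genuinely new construction of a positive-entropy system whose distinct-power products are minimal), your proof does not go through.
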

\begin{proof} Let $(Y_1,S_1)$ be the non-periodic double minimal system
constructed in \cite{King}, in particular, $(Y_1, S_1)$ is a
strictly ergodic t.d.s. with finite entropy.   Now let $(Y,S)$ be a
minimal t.d.s. with finite positive entropy, which is an extension
of $(Y_1, S_1)$ with a factor map $\pi$ satisfying that $\{y_1\in
Y_1: \pi^{- 1} (y_1)\ \text{is a singleton}\}$ is a residual subset
of $Y_1$ (see \cite[Theorem 3]{DL} for the existence of such a
t.d.s. $(Y,S)$, as $(Y_1, S_1)$ is a strictly ergodic non-periodic
system with finite entropy). Since both $(Y, S)$ and $(Y_1, S_1)$
are minimal, it is well known that the factor map $\pi$ is almost
1-1 in the sense that the subset $\{y\in Y: \pi^{- 1} (\pi y)\
\text{is a singleton}\}\subset Y$ is also residual.

Observe that each non-periodic doubly minimal system is not only
minimal but also weakly mixing, and so totally minimal. In fact, let
$(X, T)$ be a minimal weakly mixing t.d.s. and $m\in \N$, it is well
known that the system $(X, T^m)$ is weakly mixing and each point of
$(X, T^m)$ is minimal, hence $(X, T^m)$ is a minimal t.d.s. For each
pair $x\not= y\in Y_1$, if $y=S_1^nx$ for some $n\not=0$, then $(x,
S_1^{-n}y)=(x,x)$ is proximal; and if $y\not\in
\{S_1^ix\}_{i=-\infty}^\infty$ (which implies that $S_1^ny\not\in
\{S_1^ix\}_{i=-\infty}^\infty$ for any $n\in\Z$), then $(x,S_1^ny)$
has a dense orbit and hence is also proximal for any $n\in\Z$. Thus
$(Y_1, S_1)$, as a non-periodic doubly minimal system, is a POD
system. For a given $n\in\N$, since $(Y_1,S_1)\times
(Y_1,S_1^2)\times \cdots \times (Y_1,S_1^n)$ is minimal (by
Proposition \ref{KN}) it follows that $(Y,S)\times (Y,S^2)\times
\cdots \times (Y,S^n)$ is also minimal \cite[Proposition 2.5]{AG}
(disjointness is preserved by an almost 1-1 extension \cite[Theorem
2.6]{AG}). This implies that $\prod_{i=1}^\infty (Y,S^i)$ is
minimal, as minimality is preserved by the inverse limit. Then by
Proposition \ref{090507}, we get the conclusion.
\end{proof}

\begin{rem} In \cite{Weiss} the author showed that if each pair in a
t.d.s. $(X,T)$ is positively recurrent under $T\times T$ then the
entropy is zero. This is not true if we replace $T\times T$ by
$T\times T^2$ by the proof of the previous proposition. Note that
the question if recurrence under $T\times T$ implies zero entropy is
still open.
\end{rem}

\subsection{A related result}

Finally, we shall present a result related to the property of
hereditary lowering. First, let's make some preparations (for
details see \cite {D2, HYZ1, Mi1, Z1}).

Let $\pi: (X, T)\rightarrow (Y, S)$ be a factor map between t.d.s.s.
The {\it relative topological entropy of $(X, T)$ w.r.t. $\pi$} is
defined as follows:
\begin{equation*}
h_{\text{top}} (T, X| \pi)= \sup_{y\in Y} h (T, \pi^{- 1} (y))=
\sup_{y\in Y} h^B (T, \pi^{- 1} (y)).
\end{equation*}

Observe that (they are proved respectively as \cite[Theorem
7.3]{HYZ1}, \cite[Theorem 3.3]{CHYZ} and \cite[Theorem 4.2]{Z1})

\begin{prop} \label{0905191311}
Let $\pi: (X, T)\rightarrow (Y, S)$ be a factor map between t.d.s.s.
\begin{enumerate}

\item If $E\subseteq X$ is compact then
$$h (S, \pi (E))\le h (T, E)\le h (S, \pi (E))+ h_{\text{top}} (T, X|
\pi).$$

\item If $K\subseteq X$ then
$$h^B (S, \pi (K))\le h^B (T, K)\le h^B (S, \pi (K))+ h_{\text{top}}
(T, X| \pi).$$
\end{enumerate}
\end{prop}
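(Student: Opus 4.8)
The plan is to read off the two outer inequalities on each line from the factor-map monotonicity already available, and to prove the two inner (upper) inequalities by a Bowen-type fibered covering argument; the upper bounds carry all the content. The inequality $h(S,\pi(E))\le h(T,E)$ in (1) is exactly Proposition \ref{090319}(1). For the companion lower bound $h^B(S,\pi(K))\le h^B(T,K)$ in (2) I would work at the level of a fixed cover: put $\mathcal{U}=\pi^{-1}\mathcal{V}$ for $\mathcal{V}\in\mathcal{C}^o_Y$, and note that since $\pi T=S\pi$ and $\pi$ is onto one has $n_{T,\pi^{-1}\mathcal{V}}(A)=n_{S,\mathcal{V}}(\pi A)$ for every $A\subseteq X$. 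Hence any $\mathcal{E}\in\mathfrak{C}(S,\mathcal{V},\pi(K),k)$ pulls back to $\{\pi^{-1}E':E'\in\mathcal{E}\}\in\mathfrak{C}(T,\pi^{-1}\mathcal{V},K,k)$ with the identical weight sum $m$, so $m_{T,\pi^{-1}\mathcal{V}}(K,\lambda,k)\le m_{S,\mathcal{V}}(\pi(K),\lambda,k)$; letting $k\to+\infty$ and reading off the critical $\lambda$ gives $h^B_{\mathcal{V}}(S,\pi(K))\le h^B_{\pi^{-1}\mathcal{V}}(T,K)\le h^B(T,K)$, and taking the supremum over $\mathcal{V}$ finishes the lower bound.

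For the upper bound in (1), fix $\mathcal{U}\in\mathcal{C}^o_X$ and $\mathcal{V}\in\mathcal{C}^o_Y$. Covering $\pi(E)$ by a minimal subfamily of $\mathcal{V}_0^{n-1}$ and intersecting $E$ with the corresponding tubes yields the submultiplicative bound
$$N(\mathcal{U}_0^{n-1},E)\le N(\mathcal{V}_0^{n-1},\pi(E))\cdot\max_{B\in\mathcal{V}_0^{n-1}}N\big(\mathcal{U}_0^{n-1},E\cap\pi^{-1}(B)\big).$$
Each $B\in\mathcal{V}_0^{n-1}$ lies in a single member of $\mathcal{V}$, hence has static diameter at most $\|\mathcal{V}\|$ while staying $\|\mathcal{V}\|$-close to a fiber along the first $n$ iterates, so the tube $\pi^{-1}(B)$ degenerates to a fiber as $\|\mathcal{V}\|\to 0$. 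The goal is to show that for every $\delta>0$ one may choose $\mathcal{V}$ fine enough and $N$ large so that $N(\mathcal{U}_0^{n-1},E\cap\pi^{-1}(B))\le e^{n(h_{\text{top}}(T,X|\pi)+\delta)}$ for all $n\ge N$ and all such $B$. Granting this, taking $\limsup_n\frac1n\log$ gives $h_{\mathcal{U}}(T,E)\le h_{\mathcal{V}}(S,\pi(E))+h_{\text{top}}(T,X|\pi)+\delta\le h(S,\pi(E))+h_{\text{top}}(T,X|\pi)+\delta$; then let $\delta\to 0$ and take the supremum over $\mathcal{U}$.

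The hard part is precisely this uniform fiber estimate, because $h_{\text{top}}(T,X|\pi)=\sup_y h(T,\pi^{-1}(y))$ is a supremum over $y$ of quantities that are themselves $\lim_{\eps\to 0}\limsup_n$, and $\sup_y$ does not commute with $\limsup_n$. I would resolve it by a compactness argument at a fixed time: the fiberwise bound $h_{\mathcal{U}}(T,\pi^{-1}(y))\le h(T,\pi^{-1}(y))\le h_{\text{top}}(T,X|\pi)$ furnishes, for each $y$, a time $n_y$ at which $\pi^{-1}(y)$ is covered by at most $e^{n_y(h_{\text{top}}(T,X|\pi)+\delta)}$ members of $\mathcal{U}_0^{n_y-1}$; continuity of $\pi$ together with the Lebesgue number of $\mathcal{U}$ then lets the same (slightly enlarged) sets cover a whole neighborhood-tube of $y$, and a finite subcover of $Y$ followed by a standard concatenation over blocks upgrades this to the required uniform exponential bound for all large $n$.

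For (2), I would run the same fibered decomposition inside the Carath\'eodory structure defining $h^B$. Starting from a cover $\mathcal{E}$ of $\pi(K)$ refining $\mathcal{V}_0^{k-1}$ that nearly realizes $m_{S,\mathcal{V}}(\pi(K),\lambda)$, pull each $E'\in\mathcal{E}$ back to the tube $\pi^{-1}(E')\cap K$ and subdivide it by a countable fiber cover refining some $\mathcal{U}_0^{k'-1}$; using again $n_{T,\pi^{-1}\mathcal{V}}=n_{S,\mathcal{V}}\circ\pi$ to track the $\mathcal{V}$-clock and adding the $\mathcal{U}$-clock supplied by the fiber cover, the weights $e^{-\lambda n_{T,\mathcal{U}}}$ multiply so that the exponent $\lambda$ splits additively as $h^B(S,\pi(K))+h_{\text{top}}(T,X|\pi)$. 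The obstacle is the same uniformity issue, now compounded by the need to assemble the countably many fiber covers into a single admissible countable family with summable weights; this is exactly where a uniform fiber \emph{dimensional}-entropy bound, the analogue for $h^B$ of the lemma sketched above, is indispensable.
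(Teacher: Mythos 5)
You should first be aware that the paper itself contains no proof of this proposition: it is assembled by citation, with (1) quoted from \cite[Theorem 7.3]{HYZ1} and the two inequalities of (2) from \cite[Theorem 3.3]{CHYZ} and \cite[Theorem 4.2]{Z1}. So your attempt has to stand on its own, and it splits into a sound half and a gapped half. The sound half is (1): the tube decomposition $N(\mathcal{U}_0^{n-1},E)\le N(\mathcal{V}_0^{n-1},\pi(E))\cdot\max_B N(\mathcal{U}_0^{n-1},E\cap\pi^{-1}(B))$ is right, and your compactness-plus-concatenation resolution of the uniformity issue is exactly Bowen's classical argument for $h(T,X)\le h(S,Y)+\sup_y h(T,\pi^{-1}(y))$, which is also how \cite[Theorem 7.3]{HYZ1} goes. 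One simplification you missed makes your sketch close cleanly: since every $x\in E\cap\pi^{-1}(B)$ with $B=\bigcap_{i=0}^{n-1}S^{-i}V_{j_i}$ has the \emph{same} itinerary $(j_0,\dots,j_{n-1})$, the greedy block decomposition (choose $W_{y}$ containing $V_{j_{t_s}}$, cover $\pi^{-1}(W_y)$ by at most $e^{n_y(h_{\text{top}}(T,X|\pi)+\delta)}$ elements of $\mathcal{U}_{t_s}^{t_s+n_y-1}$, set $t_{s+1}=t_s+n_y$) is determined by $B$ alone, so the per-block counts simply multiply and no combinatorial factor over block structures ever appears.

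There are, however, two genuine problems, both in (2). First, a direction error in your lower bound: from your (correct) pullback inequality $m_{T,\pi^{-1}\mathcal{V}}(K,\lambda,k)\le m_{S,\mathcal{V}}(\pi(K),\lambda,k)$ one reads off $h^B_{\pi^{-1}\mathcal{V}}(T,K)\le h^B_{\mathcal{V}}(S,\pi(K))$ --- the \emph{opposite} of what you assert. The fix is to push covers forward rather than pull them back: for $\mathcal{F}\in\mathfrak{C}(T,\pi^{-1}\mathcal{V},K,k)$ the family $\{\pi(F):F\in\mathcal{F}\}$ lies in $\mathfrak{C}(S,\mathcal{V},\pi(K),k)$ with identical weights by your identity $n_{T,\pi^{-1}\mathcal{V}}(A)=n_{S,\mathcal{V}}(\pi(A))$, giving $m_{S,\mathcal{V}}(\pi(K),\lambda,k)\le m_{T,\pi^{-1}\mathcal{V}}(K,\lambda,k)$ and hence $h^B_{\mathcal{V}}(S,\pi(K))\le h^B_{\pi^{-1}\mathcal{V}}(T,K)\le h^B(T,K)$. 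Second, and more seriously, the upper bound in (2) --- which you correctly identify as carrying the content --- is a plan rather than a proof: you explicitly defer both the uniform fiber dimensional-entropy bound and the assembly of countably many fiber covers, of varying depths $n_{S,\mathcal{V}}(E')$, into a single admissible family with controlled weight sum. That missing step is precisely the substance of the cited \cite[Theorem 4.2]{Z1}. It is bridgeable from your (1)-lemma: a tube over $E'$ with $n_{S,\mathcal{V}}(E')=m\ge N$ is covered by at most $e^{m(h_{\text{top}}(T,X|\pi)+\delta)}$ elements of $\mathcal{U}_0^{m-1}$, and any subset of such an element has $n_{T,\mathcal{U}}\ge m$, so each $e^{-(\lambda+h_{\text{top}}(T,X|\pi)+\delta)m}$ in the base cover converts to at most $e^{-\lambda m}$ upstairs, yielding $m_{T,\mathcal{U}}(K,\lambda+h_{\text{top}}(T,X|\pi)+\delta,k)\le m_{S,\mathcal{V}}(\pi(K),\lambda,k)$ up to the shallow elements with $m<N$ and the infinite-depth elements, which need separate bookkeeping as $k\to+\infty$. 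As written, your text asserts that ``the exponent $\lambda$ splits additively'' without establishing any of this, so (2) remains incomplete.
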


Let $(X, T)$ be a t.d.s. and $\mathcal{U}_1, \mathcal{U}_2\in
\mathcal{C}^o_X$. Put $N (\mathcal{U}_1| \mathcal{U}_2)= \max \{N
(\mathcal{U}_1, U_2): U_2\in \mathcal{U}_2\}$. Then, for each $m,
n\in \mathbb{N}$,
\begin{eqnarray*}
N ((\mathcal{U}_1)_0^{m+ n- 1}| (\mathcal{U}_2)_0^{m+ n- 1})&\le & N
((\mathcal{U}_1)_0^{n- 1}| (\mathcal{U}_2)_0^{m+ n- 1}) N
((\mathcal{U}_1)_n^{m+ n- 1}| (\mathcal{U}_2)_0^{m+ n- 1}) \\
&\le & N ((\mathcal{U}_1)_0^{n- 1}| (\mathcal{U}_2)_0^{n- 1}) N
((\mathcal{U}_1)_n^{m+ n- 1}| (\mathcal{U}_2)_n^{m+ n- 1}) \\
&= & N ((\mathcal{U}_1)_0^{n- 1}| (\mathcal{U}_2)_0^{n- 1}) N
((\mathcal{U}_1)_0^{m- 1}| (\mathcal{U}_2)_0^{m- 1}),
\end{eqnarray*}
i.e. $\{\log N ((\mathcal{U}_1)_0^{n- 1}| (\mathcal{U}_2)_0^{n- 1}):
n\in \mathbb{N}\}$ is sub-additive, and so we may set
\begin{eqnarray*}
h (T, \mathcal{U}_1| \mathcal{U}_2)= \lim_{n\rightarrow +\infty}
\frac{1}{n} \log N ((\mathcal{U}_1)_0^{n- 1}| (\mathcal{U}_2)_0^{n-
1})\ \left(= \inf_{n\in \mathbb{N}} \frac{1}{n} \log N
((\mathcal{U}_1)_0^{n- 1}| (\mathcal{U}_2)_0^{n- 1})\right).
\end{eqnarray*}
Define the {\it topologically conditional $\mathcal{U}_2$-entropy of
$(X, T)$} by $$h^* (T, X| \mathcal{U}_2)= \sup_{\mathcal{U}_1\in
\mathcal{C}_X^o} h (T, \mathcal{U}_1| \mathcal{U}_2),$$ and the {\it
topologically conditional entropy of $(X, T)$} by $$h^* (T, X)=
\inf_{\mathcal{U}_2\in \mathcal{C}_X^o} h^* (T, X| \mathcal{U}_2).$$
In particular, $h^* (T, X)\le h_{\text{top}} (T, X)$. Observe that, if $(X, T)$
is zero-dimensional then by a standard construction we can represent
$(X, T)$ as an inverse limit of sub-shifts over finite alphabets:
\begin{eqnarray*}
(X, T)= \underleftarrow{\lim} (X_r, T_r)
\end{eqnarray*}
where $X_r\subseteq \Lambda_r^\mathbb{Z}$ with $\Lambda_r$ a finite
discrete space, $T_r$ is the full shift over $\Lambda_r^\Z$ and $(X_r,
T_r)$ is a factor of $(X_{r+ 1}, T_{r+ 1})$ for each $r\in
\mathbb{N}$. Now let $\phi_r: (X, T)\rightarrow (X_r, T_r)$ be the
natural homomorphism and $\mathcal{U}_r$ the clopen generated
partition of $X_r$ ($r\in \mathbb{N}$). Observe that the sequence
$\{h_{\text{top}} (T, X| \phi_r): r\in \mathbb{N}\}$ decreases,
there are some easy but useful facts:
\begin{enumerate}

\item $h_{\text{top}} (T_r, X_r)= h_{\phi_r^{- 1} (\mathcal{U}_r)}
(T, X)$ and $h_{\text{top}} (T, X)= \lim_{r\rightarrow +\infty}
h_{\text{top}} (T_r, X_r)$;

\item $h_{\text{top}} (T, X| \phi_r)\le h^* (T, X| \phi_r^{- 1}
(\mathcal{U}_r))$ and

\item $h^* (T, X)= \lim_{r\rightarrow +\infty} h^* (T, X|
\phi_r^{- 1} (\mathcal{U}_r))\ge \lim_{r\rightarrow +\infty}
h_{\text{top}} (T, X| \phi_r)$.
\end{enumerate}

Thus we have the following interesting result.

\begin{thm} \label{interpret}
Let $(X, T)$ be a t.d.s. with finite entropy and $K\subseteq X$
compact. If $h (T, K)> h^* (T, X)$ then for each $0\le h\le h (T,
K)- h^* (T, X)$ one has
\begin{eqnarray*}
\overline{\{h (T, K'): K'\subseteq K\ \text{is compact}\}}\cap [h,
h+ h^* (T, X)]\neq \emptyset.
\end{eqnarray*}
\end{thm}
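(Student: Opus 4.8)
The plan is to reduce to the zero-dimensional case and then exploit the inverse-limit-of-subshifts representation recorded just before the theorem, together with the two-sided fiber estimate of Proposition \ref{0905191311}(1). First I would pass to a zero-dimensional \emph{principal} extension $\rho\colon(\tilde X,\tilde T)\to(X,T)$; such an extension exists for every t.d.s., and since $h_{\text{top}}(\tilde T,\tilde X\mid\rho)=0$, Proposition \ref{0905191311}(1) gives $h(\tilde T,\rho^{-1}(K'))=h(T,K')$ for every compact $K'\subseteq X$ and, symmetrically, $h(T,\rho(\tilde K'))=h(\tilde T,\tilde K')$ for every compact $\tilde K'\subseteq\tilde X$. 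Hence, with $\tilde K=\rho^{-1}(K)$, one has $h(\tilde T,\tilde K)=h(T,K)$ and the two sets $\{h(\tilde T,\tilde K'):\tilde K'\subseteq\tilde K\ \text{compact}\}$ and $\{h(T,K'):K'\subseteq K\ \text{compact}\}$ coincide. As principality does not raise the topological conditional entropy, $h^*(\tilde T,\tilde X)\le h^*(T,X)$, so it suffices to prove the statement in $(\tilde X,\tilde T)$ (the hypotheses transfer since $h(\tilde T,\tilde K)=h(T,K)>h^*(T,X)\ge h^*(\tilde T,\tilde X)$). I therefore assume from now on that $(X,T)$ is zero-dimensional and write $(X,T)=\underleftarrow{\lim}(X_r,T_r)$ with factor maps $\phi_r$ and clopen partitions $\mathcal U_r$ exactly as above.

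The key mechanism is a transfer step. Put $b_r=h_{\text{top}}(T,X\mid\phi_r)$; by the facts recorded before the theorem the sequence $\{b_r\}$ decreases and $\ell:=\lim_r b_r\le h^*(T,X)$. Each factor $(X_r,T_r)$ is a subshift, hence expansive and in particular asymptotically $h$-expansive, so by \cite{HYZ1} it is hereditarily (uniformly) lowerable: for the compact set $\phi_r(K)\subseteq X_r$ and any value $c\in[0,h(T_r,\phi_r(K))]$ there is a compact $L\subseteq\phi_r(K)$ with $h(T_r,L)=c$. Given such an $L$, I would set $E_L=K\cap\phi_r^{-1}(L)$, a compact subset of $K$ with $\phi_r(E_L)=L$ (here one uses $L\subseteq\phi_r(K)$). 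Proposition \ref{0905191311}(1) applied to $\phi_r$ then yields
\[
h(T_r,L)\le h(T,E_L)\le h(T_r,L)+b_r ,
\]
that is, $h(T,E_L)\in[c,c+b_r]$.

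With this in hand the construction is direct. Note first that $h(T_r,\phi_r(K))\le h(T,K)\le h(T_r,\phi_r(K))+b_r$, so $\liminf_r h(T_r,\phi_r(K))\ge h(T,K)-\ell\ge h(T,K)-h^*(T,X)\ge h$. For each large $r$ I set $c_r=\min\{h,\,h(T_r,\phi_r(K))\}$; then $c_r\le h(T_r,\phi_r(K))$ and $c_r\to h$. Choosing a compact $L_r\subseteq\phi_r(K)$ with $h(T_r,L_r)=c_r$ and putting $K_r=E_{L_r}\subseteq K$, I obtain $h(T,K_r)\in[c_r,c_r+b_r]$. Since $c_r\to h$ and $b_r\to\ell\le h^*(T,X)$, every limit point $v$ of the bounded sequence $\{h(T,K_r)\}_r$ satisfies $h\le v\le h+\ell\le h+h^*(T,X)$; any such $v$ lies in $\overline{\{h(T,K'):K'\subseteq K\ \text{compact}\}}\cap[h,h+h^*(T,X)]$, which gives the conclusion.

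The step I expect to be the main obstacle is the reduction in the first paragraph: one must secure a zero-dimensional principal extension that \emph{simultaneously} preserves the covering entropy of $K$, preserves the entire set of achievable entropy values of compact subsets, and does not increase $h^*$, for only then do the facts stated for the zero-dimensional case apply to an arbitrary finite-entropy system. Everything after the reduction is a short combination of subshift hereditary lowerability with the two-sided fiber estimate of Proposition \ref{0905191311}(1); the only numerical care needed is the elementary bookkeeping that the slack $b_r$ shrinks to at most $h^*(T,X)$, which is what pins the limiting value into $[h,h+h^*(T,X)]$.
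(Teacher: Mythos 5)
Your proposal is correct and takes essentially the same route as the paper's own proof: pass to a zero-dimensional principal extension (using \cite[Proposition 7.8]{BD} together with \cite{Le} and \cite{DS} to keep $h^*$ and the relevant covering entropies unchanged via Proposition \ref{0905191311}(1)), represent the system as an inverse limit of subshifts, lower exactly inside the symbolic factors by hereditary lowerability, and pull back by intersecting $\phi_r^{-1}$ of the lowered set with $K$, letting the slack $b_r=h_{\text{top}}(T,X|\phi_r)$ shrink to at most $h^*(T,X)$. Your bookkeeping with $c_r=\min\{h,\,h(T_r,\phi_r(K))\}$ merely replaces the paper's harmless reduction to the open case $h<h(T,K)-h^*(T,X)$, a cosmetic difference.
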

\begin{proof}
It's well-known that each t.d.s. with finite entropy has a
zero-dimensional principal extension \cite[Proposition 7.8]{BD},
i.e. an extension preserving entropy for each invariant measure; and
if $\pi$ is a principal extension of a system with finite entropy,
then $\pi$ preserves the topologically conditional entropy
\cite[Theorem 3]{Le} and has zero relative topological entropy by
conditional variational principles \cite[Theorems 3 and 4]{DS}.
Thus, using Proposition \ref{0905191311} (1) we may assume that $(X,
T)$ is zero-dimensional.

 Moreover, it
makes no difference to say $h\in [0, h (T, K)- h^* (T, X))$.

We represent $(X, T)$ by an inverse limit of sub-shifts over finite
alphabets
\begin{eqnarray*}
(X, T)= \underleftarrow{\lim} (X_r, T_r),
\end{eqnarray*}
with $\phi_r: (X, T)\rightarrow (X_r, T_r)$ the natural homomorphism
for each $r\in \mathbb{N}$. Then
\begin{equation*}
h< h (T, K)- h^* (T, X)\le h (T, K)- \lim_{r\rightarrow +\infty}
h_{\text{top}} (T, X| \phi_r)\ (\text{using fact (3)}).
\end{equation*}
For each $r\in \mathbb{N}$ we set $K_r= \phi_r (K)$, so $h (T, K)-
h_{\text{top}} (T, X| \phi_r)\le h (T_r, K_r)\le h (T, K)$ (using
Proposition \ref{0905191311}
(1) again). Thus if $r\in \mathbb{N}$ is large
enough then there exists compact $K^h_r\subseteq K_r$ with $h (T_r,
K^h_r)= h$ (using Theorem \cite[Theorem 5.4]{HYZ1}). Last, put
$K_r^{(h)}= \phi_r^{- 1} (K_r^h)\cap K$. As $\phi_r (K_r^{(h)})=
K_r^h$, by Proposition \ref{0905191311} (1) one has
\begin{eqnarray*}
h= h (T_r, K_r^h)\le h (T, K_r^{(h)})\le h (T_r, K_r^h)+
h_{\text{top}} (T, X| \phi_r)= h+ h_{\text{top}} (T, X| \phi_r).
\end{eqnarray*}
We can claim the conclusion by fact (3).
\end{proof}


\section{A positive answer to Questions \ref{q1} and \ref{q4}}

In this section we shall give a positive answer to Questions
\ref{q1} and \ref{q4}.

Let $(X, T)$ be a t.d.s. Denote by $\mathcal{M} (X)$ (resp.
$\mathcal{M} (X, T)$, $\mathcal{M}^e (X, T)$) the set of all Borel
probability measures (resp. $T$-invariant Borel probability
measures, ergodic $T$-invariant Borel probability measures) on $X$.
All of them are equipped with the weak star topology. Denote by
$\mathcal{B}_X$ the set of all Borel subsets of $X$.

Before proceeding, we need restate \cite[Lemma 4.3 (2)]{HYZ1} as
follows (for the detailed introduction of the (relative)
measure-theoretic entropy and the disintegration of a measure over a
sub-$\sigma$-algebra see for example \cite[\S 4]{HYZ1}).

\begin{lem} \label{0905081904}
Let $(X, T)$ be a t.d.s., $\mu\in \mathcal{M}^e(X, T)$ and
$\mathcal{C}\subseteq \mathcal{B}_\mu$ a $T$-invariant
sub-$\sigma$-algebra (i.e. $T^{- 1} \mathcal{C}= \mathcal{C}$ in the
sense of $\mu$), here $\mathcal{B}_\mu$ is the completion of
$\mathcal{B}_X$ under $\mu$. If $\mu= \int_X \mu_x d \mu (x)$ is the
disintegration of $\mu$ over $\mathcal{C}$, then, for $\mu$-a.e.
$x\in X$, fixing each $x$, for each $\epsilon\in (0, 1)$ there
exists a compact subset $Z_x (\epsilon)$ of $X$ such that $\mu_x
(Z_x (\epsilon))\ge 1- \epsilon$ and $h^B (T, Z_x (\epsilon))= h (T,
Z_x (\epsilon))= h_\mu (T, X| \mathcal{C})$.
\end{lem}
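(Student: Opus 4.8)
The plan is to squeeze $h(T,Z_x(\epsilon))$ and $h^B(T,Z_x(\epsilon))$ between the same value $h_\mu(T,X|\mathcal{C})$, using as the single analytic input the relative (conditional) Brin--Katok local entropy formula. Writing $B_n(y,\epsilon')=\{z\in X:d_n(z,y)\le\epsilon'\}$ and recalling that $\mu_x(A)=\E_\mu(\mathbf 1_A|\mathcal{C})(y)$ for $\mu_x$ the conditional measure through $y$, this formula asserts that for $\mu$-a.e. $x$ and $\mu_x$-a.e. $y$,
\[
\lim_{\epsilon'\to 0}\liminf_{n\to\infty}-\tfrac1n\log\mu_x(B_n(y,\epsilon'))=\lim_{\epsilon'\to 0}\limsup_{n\to\infty}-\tfrac1n\log\mu_x(B_n(y,\epsilon'))=h_\mu(T,X|\mathcal{C}).
\]
Equivalently one could run the argument through the relative Shannon--McMillan--Breiman theorem applied to a refining sequence of finite partitions with $\mu$-null boundaries; Brin--Katok is preferable here because it controls \emph{all} small dynamical balls rather than only partition atoms, which is exactly what the quantities $n_{T,\mathcal{U}}(\cdot)$ and the separated/spanning counts produce.

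First I would fix $x$ in the full-measure set on which the formula holds $\mu_x$-a.e. Given $\epsilon\in(0,1)$, I choose $\eta_j\downarrow 0$ and scales $\epsilon'_j\downarrow 0$, and apply Egorov's theorem on $(X,\mu_x)$ at each scale to find sets of $\mu_x$-measure arbitrarily close to $1$ on which
\[
\liminf_{n}-\tfrac1n\log\mu_x(B_n(y,\epsilon'_j))\ge h_\mu(T,X|\mathcal{C})-\eta_j
\quad\text{and}\quad
\limsup_{n}-\tfrac1n\log\mu_x(B_n(y,\epsilon'_j))\le h_\mu(T,X|\mathcal{C})+\eta_j
\]
hold uniformly in $y$. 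Intersecting these diagonally and passing to a compact subset by inner regularity of $\mu_x$ produces a compact $Z_x(\epsilon)$ with $\mu_x(Z_x(\epsilon))\ge 1-\epsilon$ carrying uniform two-sided control at every scale: there are $N_j$ with $e^{-n(h_\mu(T,X|\mathcal{C})+\eta_j)}\le\mu_x(B_n(y,\epsilon'_j))\le e^{-n(h_\mu(T,X|\mathcal{C})-\eta_j)}$ for all $y\in Z_x(\epsilon)$ and $n\ge N_j$.

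For the upper bound $h(T,Z_x(\epsilon))\le h_\mu(T,X|\mathcal{C})$ I would use the left inequality. A maximal $(n,2\epsilon'_j)$-separated subset of $Z_x(\epsilon)$ has centres whose $\epsilon'_j$-balls are pairwise disjoint and each of mass $\ge e^{-n(h_\mu(T,X|\mathcal{C})+\eta_j)}$; since the total mass is at most $1$, its cardinality is $\le e^{n(h_\mu(T,X|\mathcal{C})+\eta_j)}$, whence $s_n(d,T,2\epsilon'_j,Z_x(\epsilon))\le e^{n(h_\mu(T,X|\mathcal{C})+\eta_j)}$ and $s(d,T,2\epsilon'_j,Z_x(\epsilon))\le h_\mu(T,X|\mathcal{C})+\eta_j$, so letting $j\to\infty$ gives $h(T,Z_x(\epsilon))\le h_\mu(T,X|\mathcal{C})$. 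For the lower bound $h^B(T,Z_x(\epsilon))\ge h_\mu(T,X|\mathcal{C})$ I would use the right inequality directly against the definition of $m_{T,\mathcal{U}}$. Fix $j$ and take any open cover $\mathcal{U}$ with $\|\mathcal{U}\|\le\epsilon'_j$; if $E\cap Z_x(\epsilon)\neq\emptyset$ and $n_{T,\mathcal{U}}(E)=n$ then $T^iE$ lies in a single element of $\mathcal{U}$ for $i<n$, so $E$ has $d_n$-diameter $\le\epsilon'_j$ and hence $E\subseteq B_n(y,\epsilon'_j)$ for any $y\in E\cap Z_x(\epsilon)$; thus $\mu_x(E)\le e^{-(h_\mu(T,X|\mathcal{C})-\eta_j)\,n_{T,\mathcal{U}}(E)}$ once $n\ge N_j$. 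Consequently, for $\lambda=h_\mu(T,X|\mathcal{C})-\eta_j$ every $\mathcal{E}\in\mathfrak{C}(T,\mathcal{U},Z_x(\epsilon),K)$ with $K\ge N_j$ satisfies $\sum_{E\in\mathcal{E}}e^{-\lambda n_{T,\mathcal{U}}(E)}\ge\mu_x(Z_x(\epsilon))\ge 1-\epsilon$, so $m_{T,\mathcal{U}}(Z_x(\epsilon),\lambda)\ge 1-\epsilon>0$ and $h^B(T,Z_x(\epsilon))\ge h^B_{\mathcal{U}}(T,Z_x(\epsilon))\ge\lambda$. Letting $j\to\infty$ gives $h^B(T,Z_x(\epsilon))\ge h_\mu(T,X|\mathcal{C})$. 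Since always $h(T,\cdot)\ge h^B(T,\cdot)$, the two estimates collapse to $h^B(T,Z_x(\epsilon))=h(T,Z_x(\epsilon))=h_\mu(T,X|\mathcal{C})$.

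The hard part will be the relative Brin--Katok formula and, more pressingly, its uniform (Egorov) version together with the measurability of the disintegration: one must justify that ``for $\mu$-a.e. $x$'' is legitimate, that the conditional measures $\mu_x$ may be treated as genuine Borel probability measures to which Egorov and inner regularity apply, and that the exceptional sets in $y$ can be organized to depend measurably on $x$. Once these measure-theoretic points are secured, Steps two and three are direct translations of the two inequalities in the local entropy formula: reading it as a lower bound on ball mass forces few separated points and hence small covering entropy, while reading it as an upper bound on ball mass forces large dimensional entropy.
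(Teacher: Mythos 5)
Your proposal is correct in substance, and a preliminary remark is in order: the paper itself offers no proof of this statement — it is quoted verbatim from \cite[Lemma 4.3 (2)]{HYZ1} — so the only comparison available is with the strategy of that earlier paper, of which your argument is the natural reconstruction: a conditional local entropy formula, uniformized by Egorov and inner regularity, followed by the two standard counting steps (pairwise disjoint Bowen balls of mass at least $e^{-n(h+\eta_j)}$ force $s_n(d,T,2\epsilon'_j,Z_x(\epsilon))\le e^{n(h+\eta_j)}$, while the entropy-distribution estimate $\mu_x(E)\le e^{-\lambda\, n_{T,\mathcal{U}}(E)}$ forces $m_{T,\mathcal{U}}(Z_x(\epsilon),\lambda)\ge 1-\epsilon$, hence $h^B_{\mathcal{U}}\ge\lambda$). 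Since $h(T,\cdot)\ge h^B(T,\cdot)$ on non-empty sets, the squeeze closes as you say.

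Three points deserve tightening. First, the load-bearing input should be justified rather than only flagged: the relative Brin--Katok formula does hold here and follows by relativizing the classical proof, with the conditional SMB theorem $\frac{1}{n}I_\mu(\alpha_0^{n-1}\mid\mathcal{C})\to h_\mu(T,\alpha\mid\mathcal{C})$ a.e.\ replacing SMB and ergodicity making the $T$-invariant local limit constant; one first replaces $\mathcal{C}$ by a countably generated $\sigma$-algebra equal to it mod $\mu$, so that the disintegration exists and $\mu_y=\mu_x$ for $\mu_x$-a.e.\ $y$, and then Fubini turns the a.e.-$y$ statement into a full-measure set of good $x$ — which also disposes of your worry about measurable dependence on $x$: the lemma requires no measurable selection, since for each fixed good $x$ everything happens inside the single space $(X,\mu_x)$, where Egorov applies because $y\mapsto\mu_x(B_n(y,\epsilon'))$ is upper semicontinuous and $\mu_x$ is Radon on the compact metric space $X$. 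Second, mind the order of quantifiers in the Egorov step: at a \emph{fixed} scale the liminf inequality need not hold a.e., so you must pick $\epsilon'_j$ \emph{after} $\eta_j$, using monotonicity of $-\frac{1}{n}\log\mu_x(B_n(y,\epsilon'))$ in $\epsilon'$ to make $\{y:\liminf_n\ge h-\eta_j\}$ of measure close to $1$ before uniformizing $N_j$; on the limsup side the same monotonicity gives the bound $\le h$ at \emph{every} scale, so only the $N_j$-uniformization is needed there. Third, in the lower bound you should discard elements with $n_{T,\mathcal{U}}(E)=+\infty$; this is harmless because the ball inclusion valid for all $n$ forces $\mu_x(E)=0$ when $h>0$ (the degenerate cases $h=0$, where only the upper estimate is needed, and $h=+\infty$, where one replaces $h\pm\eta_j$ by arbitrary finite thresholds, should be run separately). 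With these repairs the argument is complete.
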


We also need state a relative version of the well-known Sinai
Theorem, which is essentially found in \cite{Orn}. It was made
explicit in \cite[Theorem 5]{OW} and \cite{Th} (for another
treatment of it see \cite{Ki}). Before stating it, we need
make some preparations.

Recall that {\it a $k$-element distribution} ${\bf I}$ is a
probability vector $(I_1, \cdots, I_k)$, i.e. $I_1, \cdots, I_k$ $\ge 0$ and $I_1+ \cdots+ I_k= 1,
k\in \N$. The {\it entropy of it} is defined by
$$H ({\bf I})= \sum_{i= 1}^k - I_i \log I_i.$$
From now on, for a
given t.d.s. $(X, T)$ and $\mu\in \mathcal{M} (X)$, each $\alpha\in
\mathcal{P}_X$ is ordered and associated with a distribution
$\text{dist} \alpha$ (i.e. $\alpha= (A_1, \cdots, A_k)$ is equipped
with a fixed order and in this case $\text{dist} \alpha= (\mu(A_1),
\cdots, \mu(A_k))$).

A relative version of the well-known Sinai Theorem is stated as
follows.

\begin{lem} \label{Orn}
Let $(X, T)$ be a t.d.s., $\mu\in \mathcal{M}^e (X ,T)$ and
$\alpha\in \mathcal{P}_X, \gamma\in \mathcal{P}_X$ with
$\alpha\subseteq \bigvee_{i= - \infty}^{+ \infty} T^{- i} \gamma$
(in the sense of $\mu$). Then, for each $k$-element distribution
${\bf I}, k\in \N$ with $H ({\bf I})\le h_\mu (T, \gamma)- h_\mu (T,
\alpha)$,
 there exists $\beta\in \mathcal{P}_X$ satisfying that
\begin{enumerate}

\item $\beta\subseteq
 \bigvee_{i= - \infty}^{+ \infty} T^{- i} \gamma$ and $\text{dist} \beta ={\bf I}$;

\item the partitions $T^i \beta$, $i\in
\mathbb{Z}$ are independent (in the sense of $\mu$), that is, if
$B_{i_1}\in T^{i_1} \beta, i_1\in \Z$ and $B_{i_2}\in T^{i_2} \beta,
i_2\in \Z$ with $i_1\neq i_2$ then $\mu (B_{i_1}\cap B_{i_2})= \mu
(B_{i_1}) \mu (B_{i_2})$;

\item $\bigvee_{i= - \infty}^{+ \infty} T^{- i} \beta$ is independent
of $\bigvee_{i= - \infty}^{+ \infty} T^{- i} \alpha$ (in the sense
of $\mu$).
\end{enumerate}
\end{lem}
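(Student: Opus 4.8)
The plan is to read the statement off from the relative Sinai theorem cited just above, once it is applied in the right intermediate factor. Put $\mathcal{A}=\bigvee_{i=-\infty}^{+\infty}T^{-i}\alpha$ and $\mathcal{G}=\bigvee_{i=-\infty}^{+\infty}T^{-i}\gamma$, two $T$-invariant sub-$\sigma$-algebras of $\mathcal{B}_\mu$. The hypothesis $\alpha\subseteq\mathcal{G}$ (in the sense of $\mu$) forces $\mathcal{A}\subseteq\mathcal{G}$, so one has a tower of factors $(X,\mathcal{B}_\mu,\mu,T)\to(X,\mathcal{G},\mu,T)\to(X,\mathcal{A},\mu,T)$, each ergodic as a factor of the ergodic system $(X,\mu,T)$. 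The whole argument will be carried out inside the middle system $(X,\mathcal{G},\mu,T)$, whose points are coded by the $\gamma$-names; working there is what will guarantee that the partition we produce is $\mathcal{G}$-measurable, i.e. satisfies $\beta\subseteq\bigvee_{i}T^{-i}\gamma$ as demanded in (1).

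First I would pin down the relative entropy that governs the construction. Since $\gamma$ (resp. $\alpha$) generates $\mathcal{G}$ (resp. $\mathcal{A}$) as a $T$-invariant $\sigma$-algebra, the Kolmogorov--Sinai generator theorem gives $h_\mu(T,\mathcal{G})=h_\mu(T,\gamma)$ and $h_\mu(T,\mathcal{A})=h_\mu(T,\alpha)$. Applying the Abramov--Rokhlin addition formula to the pair $\mathcal{A}\subseteq\mathcal{G}$ then yields
$$h_\mu(T,\mathcal{G}\,|\,\mathcal{A})=h_\mu(T,\mathcal{G})-h_\mu(T,\mathcal{A})=h_\mu(T,\gamma)-h_\mu(T,\alpha).$$
Thus the standing hypothesis $H(\mathbf{I})\le h_\mu(T,\gamma)-h_\mu(T,\alpha)$ is precisely the assertion that $H(\mathbf{I})$ does not exceed the relative entropy of the factor system $(X,\mathcal{G},\mu,T)$ over its factor $\mathcal{A}$, which is the exact numerical condition under which the relative Sinai theorem operates.

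With this in hand I would invoke the relative Sinai theorem (in the form of \cite[Theorem 5]{OW}, or \cite{Orn, Th}) for the ergodic system $(X,\mathcal{G},\mu,T)$ relative to the factor $\mathcal{A}$ and the prescribed distribution $\mathbf{I}$. Because $H(\mathbf{I})\le h_\mu(T,\mathcal{G}\,|\,\mathcal{A})$, the theorem produces a partition $\beta$, measurable with respect to $\mathcal{G}$, such that $\mathrm{dist}\,\beta=\mathbf{I}$, the process $\{T^i\beta\}_{i\in\Z}$ is independent, and $\bigvee_{i}T^{-i}\beta$ is independent of $\mathcal{A}=\bigvee_{i}T^{-i}\alpha$. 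Reading these conclusions back: $\mathcal{G}$-measurability together with $\mathrm{dist}\,\beta=\mathbf{I}$ gives (1); independence of the translates $\{T^i\beta\}_{i\in\Z}$ gives (2); and independence of $\bigvee_i T^{-i}\beta$ from $\bigvee_i T^{-i}\alpha$ gives (3). Choosing Borel representatives for the $\mathcal{G}$-measurable cells finally places $\beta$ in $\mathcal{P}_X$ in the sense of $\mu$.

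The genuine content of the lemma is entirely carried by the cited relative Sinai theorem, so I expect no difficulty in the estimates themselves; the one point that needs care --- and which I regard as the main obstacle --- is to apply the theorem \emph{inside the intermediate factor} $(X,\mathcal{G},\mu,T)$ rather than inside $(X,\mathcal{B}_\mu,\mu,T)$. Running it in the full system would still give a Bernoulli partition independent of $\mathcal{A}$, but only one guaranteed to be $\mathcal{B}_\mu$-measurable, which could violate the requirement $\beta\subseteq\bigvee_i T^{-i}\gamma$ in (1). Confining the construction to the $\gamma$-coded system is therefore essential, and it is legitimate precisely because $(X,\mathcal{G},\mu,T)$ is itself an ergodic system to which the cited theorem applies verbatim, with $\mathcal{A}$ a bona fide $T$-invariant factor and relative entropy computed as above.
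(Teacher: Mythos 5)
Your proposal is correct and takes essentially the same route as the paper: the paper offers no independent proof of this lemma, stating it as a known relative version of Sinai's theorem and citing Ornstein, Ornstein--Weiss \cite[Theorem 5]{OW}, Thouvenot and Kieffer for it. Your write-up is precisely the careful reading of that citation --- passing to the intermediate factor $(X,\mathcal{G},\mu,T)$ generated by $\gamma$, identifying $h_\mu(T,\mathcal{G}\,|\,\mathcal{A})=h_\mu(T,\gamma)-h_\mu(T,\alpha)$ via the Abramov--Rokhlin/Pinsker formula (legitimate since $\gamma$ is finite, so entropies are finite), and noting that running the cited theorem inside the $\gamma$-coded factor is what secures the $\mathcal{G}$-measurability required in conclusion (1).
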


Thus, we have

\begin{prop} \label{Huang}
Let $(X, T)$ be a t.d.s. and $\mu\in \mathcal{M}^e (X, T)$. Then,
for each $0\le h\le h_\mu (T, X)$, there exists a $T$-invariant
sub-$\sigma$-algebra $\mathcal{C}\subseteq \mathcal{B}_\mu$ (in the
sense of $\mu$) with $h_\mu (T, X| \mathcal{C})= h$, here
$\mathcal{B}_\mu$ is the completion of $\mathcal{B}_X$ under $\mu$.
\end{prop}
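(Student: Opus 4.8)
The plan is to produce the factor $\mathcal{C}$ by implanting inside $\mathcal{B}_\mu$ a Bernoulli factor of a prescribed entropy and then reading off the relative entropy of its complement; the relative Sinai theorem (Lemma \ref{Orn}) is precisely the device that guarantees such a Bernoulli factor exists. The two extreme values are immediate: for $h=h_\mu(T,X)$ take $\mathcal{C}=\{\emptyset,X\}$, the trivial $\sigma$-algebra, so that $h_\mu(T,X|\mathcal{C})=h_\mu(T,X)$; and for $h=0$ take $\mathcal{C}=\mathcal{B}_\mu$, so that $h_\mu(T,X|\mathcal{C})=0$. I may therefore assume $0<h<h_\mu(T,X)$, and the whole point is to hit the intermediate values.

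Suppose first that $h_\mu(T,X)<\infty$. Choose a finite partition $\gamma\in\mathcal{P}_X$ with $h_\mu(T,\gamma)\ge h_\mu(T,X)-h$, which is possible since the entropies of finite partitions have supremum $h_\mu(T,X)$. Pick $k\in\mathbb{N}$ and a $k$-element distribution ${\bf I}$ with $H({\bf I})=h_\mu(T,X)-h$; this is feasible because for $k$ large $H$ takes every value in $[0,\log k]$, and the chosen value is $\le h_\mu(T,\gamma)$. Now apply Lemma \ref{Orn} with this $\gamma$ and with $\alpha=\{X\}$ trivial (so $h_\mu(T,\alpha)=0$ and $\alpha\subseteq\bigvee_{i\in\mathbb{Z}}T^{-i}\gamma$ trivially). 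It produces $\beta\in\mathcal{P}_X$ whose translates $T^i\beta$ are independent with $\text{dist}\,\beta={\bf I}$. Set $\mathcal{C}=\bigvee_{i\in\mathbb{Z}}T^{-i}\beta$, a $T$-invariant sub-$\sigma$-algebra; since the $\beta$-process is i.i.d., $\mathcal{C}$ is a Bernoulli factor with $h_\mu(T,\mathcal{C})=H({\bf I})=h_\mu(T,X)-h$. The Abramov--Rokhlin addition formula then yields $h_\mu(T,X|\mathcal{C})=h_\mu(T,X)-h_\mu(T,\mathcal{C})=h$. (Parts (2)--(3) of Lemma \ref{Orn} are not even needed here: all that matters is the existence of a factor of entropy $h_\mu(T,X)-h$.)

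The remaining and genuinely harder case is $h_\mu(T,X)=\infty$ with $h<\infty$, where the addition formula degenerates to $\infty=\infty$ and no longer pins down the relative entropy; here I would build $\mathcal{C}$ by exhaustion. Fix a refining sequence of finite partitions $\gamma_1\le\gamma_2\le\cdots$ generating $\mathcal{B}_\mu$ under $T$, put $\mathcal{A}_n=\bigvee_{i\in\mathbb{Z}}T^{-i}\gamma_n\nearrow\mathcal{B}_\mu$ and $H_n=h_\mu(T,\gamma_n)\nearrow\infty$, and discard initial terms so that $H_1>h$. Applying the finite-entropy construction above inside each factor system $(\mathcal{A}_n,T)$ gives a Bernoulli factor $\mathcal{C}_n\subseteq\mathcal{A}_n$ of entropy $H_n-h$ with $h_\mu(T,\mathcal{A}_n|\mathcal{C}_n)=h$, and I set $\mathcal{C}=\bigvee_n\mathcal{C}_n$. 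The upper bound is then soft: since $\mathcal{C}\supseteq\mathcal{C}_n$ and $\gamma_n$ generates $\mathcal{A}_n$, one has $h_\mu(T,\gamma_n|\mathcal{C})\le h_\mu(T,\gamma_n|\mathcal{C}_n)=h$, and letting $n\to\infty$ (the relative entropy over $\mathcal{C}$ being the increasing limit of $h_\mu(T,\gamma_n|\mathcal{C})$ along a generating sequence) gives $h_\mu(T,X|\mathcal{C})\le h$. The main obstacle is the matching lower bound: a priori the extra information carried by $\mathcal{C}_m$ for $m>n$ could drive $h_\mu(T,\gamma_n|\mathcal{C})$ strictly below $h$, so the $\mathcal{C}_n$ must be chosen coherently. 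I would arrange the increments so that, relative to $\mathcal{C}_n$, the part of $\mathcal{C}_{n+1}$ beyond $\mathcal{C}_n$ is independent of $\mathcal{A}_n$; this is exactly what parts (2)--(3) of Lemma \ref{Orn} deliver once the base partition $\alpha$ is chosen to generate $\mathcal{C}_n$ rather than being trivial, so that each newly implanted Bernoulli increment is independent of the level-$n$ data over $\mathcal{C}_n$. With such relative independence in force, conditioning on all higher levels cannot erode the relative entropy of $\gamma_n$, giving $h_\mu(T,\gamma_n|\mathcal{C})=h$ for every $n$ and hence $h_\mu(T,X|\mathcal{C})=h$. Turning this coherence into a clean inductive bookkeeping—keeping the $\mathcal{C}_n$ increasing, each of the correct entropy, and each increment relatively independent of the accumulated relative-$h$ part—is the technical heart of the argument, whereas the finite-entropy step is the routine part.
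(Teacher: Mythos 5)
Your finite-entropy case is correct, and in that regime it is actually slicker than the paper's argument: a single application of Lemma \ref{Orn} with trivial base $\alpha=\{X\}$, followed by the addition formula $h_\mu(T,X|\mathcal{C})=h_\mu(T,X)-H({\bf I})$ (relative Pinsker with the finite-entropy Bernoulli factor $\mathcal{C}=\bigvee_{i\in\Z}T^{-i}\beta$), settles it, and you even avoid producing a partition of exact entropy $h$. The genuine gap is exactly where you flag it, in the infinite-entropy case --- which is the case the paper actually needs, since Question \ref{q1} was already settled for finite entropy by \cite[Theorem 4.4]{HYZ1} --- and the repair you hint at does not work as stated. First, without coherence the scheme really fails, not just ``a priori'': since $h_\mu(T,\mathcal{C}_m)=H_m-h\ge H_n$ for large $m$, nothing prevents $\mathcal{C}_m$ from containing all of $\mathcal{A}_n$, forcing $h_\mu(T,\gamma_n|\mathcal{C})=0$. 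Second, applying Lemma \ref{Orn} with $\alpha$ ``chosen to generate $\mathcal{C}_n$'' only yields that the new increment's process is unconditionally independent of $\mathcal{C}_n$; that does not give the relative independence of the increment from $\mathcal{A}_n$ over $\mathcal{C}_n$ which your lower bound needs. To get it you must feed Lemma \ref{Orn} a finite base partition whose process generates the full join $\mathcal{A}_n\vee\mathcal{C}_n$, e.g.\ $\gamma_n\vee\beta_1\vee\cdots\vee\beta_n$ (independence from a join does imply relative independence over any sub-$\sigma$-algebra of it), and then run an entropy-budget induction: the increment entropy you need, $h_\mu(T,\gamma_{n+1}\vee\beta_1\vee\cdots\vee\beta_n)-\sum_{j\le n}H(\text{dist}\,\beta_j)-h$, fits inside the Sinai budget $h_\mu(T,\gamma_{n+1}\vee\beta_1\vee\cdots\vee\beta_n)-h_\mu(T,\gamma_n\vee\beta_1\vee\cdots\vee\beta_n)$ precisely when the inductive hypothesis $h_\mu(T,\gamma_n|\mathcal{C}_n)=h$ holds. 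None of this bookkeeping is carried out in your proposal.

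It is worth seeing how the paper's proof of Proposition \ref{Huang} avoids this maintenance problem by a different global design, which handles finite and infinite entropy uniformly. Instead of keeping ``relative entropy $=h$ over the accumulated Bernoulli part'' at every level, it pins the residual entropy to a single fixed partition: choose $\alpha$ with $h_\mu(T,\alpha)=h$, take a countable Rokhlin generator, set $\gamma_n=\eta_n\vee\alpha$ with $\eta_n$ its finite truncations, and choose each $\beta_n$ via Lemma \ref{Orn} with base $\gamma_{n-1}$ and entropy exactly $h_\mu(T,\gamma_n)-h_\mu(T,\gamma_{n-1})$. Since $\beta_j\subseteq\bigvee_i T^{-i}\gamma_j$, the independence of each $\beta_n$-process from the $\gamma_{n-1}$-process accumulates, so $\mathcal{C}=\bigvee_n\bigvee_i T^{-i}\beta_n$ is independent of $\bigvee_i T^{-i}\alpha$ and the lower bound $h_\mu(T,X|\mathcal{C})\ge h_\mu(T,\alpha)=h$ is immediate; the upper bound then follows from the telescoping identity \eqref{eq-keke} by induction. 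So your strategy can be completed, but completing it essentially reconstructs the paper's construction with heavier bookkeeping; as submitted, the infinite-entropy case --- the technical heart, by your own admission --- remains unproved.
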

\begin{proof}
When $h= h_\mu (T, X)$, we may take $\mathcal{C}= \{\emptyset, X\}$,
and so $h_\mu (T, X| \mathcal{C})= h$.

Now we assume that $0\le h< h_\mu (T, X)$. It is not hard to see
that we can take $\alpha\in \mathcal{P}_X$ with $h_\mu (T, \alpha)=
h$. Moreover, by Rokhlin Theorem about countable generators
\cite[10.13]{Rok} there exists a countable measurable partition $\gamma=
\{C_1, C_2, \cdots\}$ (i.e. there exists a sequence of partitions
$\{\alpha_n: n\in \N\}\subseteq \mathcal{P}_X$ with $\gamma=
\alpha_1\vee \alpha_2\vee \cdots\doteq \{A_1\cap A_2\cap \cdots:
A_n\in \alpha_n, n\in \N$\}) where $\bigvee_{i= - \infty}^{+ \infty}
T^{- i} \gamma= \mathcal{B}_\mu$ in the sense of $\mu$.

Set $\eta_0= \{X\}, \gamma_0= \eta_0\vee \alpha$ and $\eta_n= \{C_1,
C_2, \cdots, C_n, X\setminus \bigcup_{i= 1}^n C_i\}, \gamma_n=
\eta_n\vee \alpha, n\in \mathbb{N}$. Then, for each $n\in
\mathbb{N}$, by Lemma \ref{Orn}, there exists $\beta_n\in
\mathcal{P}_X$ with $\beta_n\subseteq \bigvee_{i= - \infty}^{+
\infty} T^{- i} \gamma_n$ (in the sense of $\mu$) such that
\begin{enumerate}

\item the partitions $T^i \beta_n$, $i\in \mathbb{Z}$ are
independent (in the sense of $\mu$);

\item $\bigvee_{i= - \infty}^{+ \infty} T^{- i} \beta_n$ is independent of
$\bigvee_{i= - \infty}^{+ \infty} T^{- i} \gamma_{n- 1}$ (in the
sense of $\mu$) and

\item $H (\text{dist} \beta_n)= h_\mu (T, \gamma_n)- h_\mu (T, \gamma_{n- 1})$.
\end{enumerate}
From (2), one has
\begin{eqnarray}\label{eq-ke1}
h_\mu (T, \gamma_{n-1}| \bigvee_{i= - \infty}^{+ \infty} T^{- i}
\beta_n)= h_\mu (T, \gamma_{n- 1}).
\end{eqnarray}
Moreover, observe that $\gamma_n\vee \beta_n\subseteq \bigvee_{i= -
\infty}^{+ \infty} T^{- i} \gamma_n$ (in the sense of $\mu$), using
the relative Pinsker formula (see for example \cite[Lemma 1.1]{GTW}
or \cite[Theorem 3.3]{Z}) we have

\begin{equation}\label{eq-ke2}
h_\mu (T, \gamma_n\vee \beta_n)= h_\mu (T, \gamma_n),
\end{equation}
and
\begin{eqnarray} \label{eq-keke}
& & h_\mu (T, \gamma_n| \bigvee_{i= - \infty}^{+ \infty} T^{- i}
\beta_n\vee \bigvee_{i= - \infty}^{+ \infty}
T^{- i} \gamma_{n- 1})\nonumber \\
&= & h_\mu (T, \gamma_n| \bigvee_{i= - \infty}^{+ \infty} T^{- i}
\beta_n)- h_\mu (T, \gamma_{n- 1}| \bigvee_{i= - \infty}^{+ \infty}
T^{- i} \beta_n)\ (\text{as}\ \gamma_n\vee \gamma_{n- 1}= \gamma_n)\nonumber \\
&= & h_\mu (T, \gamma_n| \bigvee_{i= - \infty}^{+ \infty} T^{- i} \beta_n)-
h_\mu (T, \gamma_{n- 1})\ (\text{by \eqref{eq-ke1}})\nonumber \\
&= & (h_\mu (T, \gamma_n\vee
\beta_n)- h_\mu (T, \beta_n))- h_\mu (T, \gamma_{n- 1})\nonumber \\
&= & h_\mu (T, \gamma_n)- h_\mu (T, \gamma_{n- 1})- h_\mu (T, \beta_n)\
(\text{by \eqref{eq-ke2}})\nonumber \\
&= & H (\text{dist} \beta_n)- h_\mu (T, \beta_n)\ (\text{by
(3)}) =0\ (\text{by (1)}).
\end{eqnarray}

Put $\mathcal{C}= \bigvee_{n= 1}^{+ \infty} \bigvee_{i= - \infty}^{+
\infty} T^{- i} \beta_n$. Note that, for each $k\in \N$, in the
sense of $\mu$, $\bigvee_{i= - \infty}^{+ \infty} T^{- i} \beta_{k+
1}$ is independent of $\bigvee_{i= - \infty}^{+ \infty} T^{- i}
\gamma_k$ and
$$\bigvee_{j= 1}^k \bigvee_{i= - \infty}^{+ \infty}
T^{- i} \beta_{j}\vee \bigvee_{i= - \infty}^{+ \infty} T^{- i}
\alpha\subseteq \bigvee_{j= 1}^k \bigvee_{i= - \infty}^{+ \infty}
T^{- i} \gamma_{j}= \bigvee_{i= - \infty}^{+ \infty} T^{- i}
\gamma_{k},$$
one has $\bigvee_{i=- \infty}^{+ \infty} T^{- i}
\beta_{k+ 1}$ is independent of $\bigvee_{j= 1}^k \bigvee_{i= -
\infty}^{+ \infty} T^{- i} \beta_j\vee \bigvee_{i= - \infty}^{+
\infty} T^{- i} \alpha$. Combing this with the fact that
$\bigvee_{i= - \infty}^{+ \infty} T^{- i} \beta_1$ is independent of
$\bigvee_{i= -\infty}^{+ \infty} T^{- i} \alpha$ (in the sense of
$\mu$), we have that $\bigvee_{j= 1}^k \bigvee_{i= - \infty}^{+
\infty} T^{- i} \beta_j$ is independent of $\bigvee_{i= - \infty}^{+
\infty} T^{- i} \alpha$ (in the sense of $\mu$) for each $k\in
\mathbb{N}$, and so $\mathcal{C}$ is independent of $\bigvee_{i= -
\infty}^{+ \infty} T^{- i} \alpha$ in the sense of $\mu$.

Finally, we claim that $\mathcal{C}$ is just the
sub-$\sigma$-algebra we need. Obviously, $T^{- 1} \mathcal{C}=
\mathcal{C}$. Now we are going to show $h_\mu (T, X| \mathcal{C})=
h$. On one hand,
$$h_\mu (T, X| \mathcal{C})\ge h_\mu (T, \alpha| \mathcal{C})=
h_\mu (T, \alpha)= h,$$
where the last identity follows from the fact
 that $\mathcal{C}$ is independent of $\bigvee_{i= - \infty}^{+
 \infty}
T^{- i} \alpha$. On the other hand, for each $n\in \mathbb{N}$ by
the relative Pinsker formula
\begin{eqnarray*}
h_\mu (T, \gamma_n| \mathcal{C})&= & h_\mu (T, \alpha| \mathcal{C})+
h_\mu (T, \eta_n| \mathcal{C}\vee \bigvee_{i= - \infty}^{+
\infty} T^{- i} \alpha) \\
&\le & h+ h_\mu (T, \eta_n| \bigvee_{j= 1}^n \bigvee_{i= -
\infty}^{+ \infty} T^{- i} \beta_j\vee
\bigvee_{i= - \infty}^{+ \infty} T^{- i} \alpha) \\
&= & h+ h_\mu (T, \eta_{n}| \bigvee_{j= 1}^n \bigvee_{i= -
\infty}^{+ \infty} T^{- i} \beta_j\vee \bigvee_{i= - \infty}^{+
\infty} T^{- i} \alpha\vee \bigvee_{i= - \infty}^{+ \infty} T^{- i}
\eta_{n- 1})+ \\
& & h_\mu (T, \eta_{n- 1}| \bigvee_{j= 1}^n \bigvee_{i= - \infty}^{+
\infty} T^{- i} \beta_j\vee \bigvee_{i= - \infty}^{+ \infty} T^{- i}
\alpha)\ (\text{as}\ \eta_n\vee \eta_{n- 1}= \eta_n).
\end{eqnarray*}
By \eqref{eq-keke}
\begin{eqnarray*}
h_\mu (T, \gamma_n| \mathcal{C})&\le & h+ h_\mu (T, \eta_{n- 1}|
\bigvee_{j= 1}^n \bigvee_{i= - \infty}^{+ \infty} T^{- i}
\beta_j\vee \bigvee_{i= - \infty}^{+ \infty} T^{- i} \alpha)\
 \\
&\le & h+ h_\mu (T, \eta_{n- 1}| \bigvee_{j= 1}^{n- 1} \bigvee_{i= -
\infty}^{+ \infty} T^{- i} \beta_j\vee
\bigvee_{i= - \infty}^{+ \infty} T^{- i} \alpha) \\
&\le & \cdots\le h+ h_\mu (T, \eta_1| \bigvee_{i= -
\infty}^{+ \infty} T^{- i} \beta_1\vee
\bigvee_{i= - \infty}^{+ \infty} T^{- i} \alpha)= h,
\end{eqnarray*}
which implies
$$h_\mu (T, X| \mathcal{C})= \lim_{n\rightarrow +\infty}
h_\mu (T, \bigvee_{i= - n}^n T^{- i} \gamma_n| \mathcal{C})=
\lim_{n\rightarrow +\infty} h_\mu (T, \gamma_n| \mathcal{C})\le h$$
(as $\gamma_1\preceq \gamma_2\preceq \cdots$ and $\bigvee_{n\in
\mathbb{N}} \bigvee_{i= - n}^n T^{- i} \gamma_n= \mathcal{B}_\mu$ (in the
sense of $\mu$)) and so $h_\mu (T, X| \mathcal{C})= h$. This ends the
proof of Proposition.
\end{proof}

Now we can answer Questions \ref{q1} and \ref{q4} affirmatively.

\begin{thm} \label{0905112120}
Each t.d.s. $(X, T)$ is not only lowerable but also D-lowerable. In
fact, for each $0\le h\le h (T, X)$ there exists compact
$K_h\subseteq X$ with $h (T, K_h)= h^B (T, K_h)= h$.
\end{thm}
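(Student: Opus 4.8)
The plan is to build the required compact set directly from an ergodic measure of suitable entropy, using Proposition \ref{Huang} to produce a $T$-invariant sub-$\sigma$-algebra of \emph{exact} relative entropy $h$, and then Lemma \ref{0905081904} to realize that relative entropy on a single compact set sitting inside a fibre. Since $h(T,K)\ge h^B(T,K)\ge 0$ for every nonempty $K\subseteq X$, it suffices to exhibit one compact $K_h$ on which both quantities equal $h$.

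First I would treat the endpoint $h=h(T,X)$ (this also covers $h(T,X)=+\infty$, $h=+\infty$) by simply taking $K_h=X$: then $h(T,K_h)=h(T,X)$, and $h^B(T,X)=h(T,X)$ by Proposition \ref{06.02.28}(1), so both equal $h$. For the remaining range $0\le h<h(T,X)$ I would first select an ergodic measure of large enough entropy. By the variational principle together with the ergodic decomposition of entropy, $h(T,X)=\sup_{\mu\in\mathcal{M}^e(X,T)}h_\mu(T,X)$; since $h<h(T,X)$ there is some $\mu\in\mathcal{M}^e(X,T)$ with $h_\mu(T,X)>h$, and in particular $0\le h\le h_\mu(T,X)$. (A compact system always carries an invariant, hence an ergodic, measure, so this step is never vacuous.)

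Applying Proposition \ref{Huang} to this $\mu$ and this $h$ yields a $T$-invariant sub-$\sigma$-algebra $\mathcal{C}\subseteq\mathcal{B}_\mu$ with $h_\mu(T,X|\mathcal{C})=h$. I would then disintegrate $\mu=\int_X\mu_x\,d\mu(x)$ over $\mathcal{C}$ and invoke Lemma \ref{0905081904}: for $\mu$-a.e. $x$ and any fixed $\epsilon\in(0,1)$ there is a compact subset $Z_x(\epsilon)\subseteq X$ with $h^B(T,Z_x(\epsilon))=h(T,Z_x(\epsilon))=h_\mu(T,X|\mathcal{C})=h$. Fixing one such $x$ and setting $K_h=Z_x(\epsilon)$ completes the construction. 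Observe that this general argument even subsumes the case $h=0$, since every ergodic $\mu$ satisfies $h_\mu(T,X)\ge 0$, so Proposition \ref{Huang} may be applied with $h=0$.

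The genuine analytic difficulty is already encapsulated in Proposition \ref{Huang} (which rests on the relative Sinai theorem, Lemma \ref{Orn}) and in Lemma \ref{0905081904}; once these are granted, the theorem is essentially an assembly of them. The only points demanding care—and the most likely source of a slip—are the endpoint bookkeeping at $h=h(T,X)$ and in the infinite-entropy case, and the upgrade from a supremum over \emph{all} invariant measures to the existence of a \emph{single} ergodic measure with $h_\mu(T,X)>h$, which is precisely where the ergodic decomposition of entropy is needed.
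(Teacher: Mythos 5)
Your proposal is correct and follows essentially the same route as the paper's own proof: $K_h=X$ at the endpoint $h=h(T,X)$, and for $h<h(T,X)$ an ergodic measure of entropy at least $h$ (the paper's cited variational principle is already the ergodic version, so your extra appeal to the ergodic decomposition is a harmless elaboration), followed by Proposition \ref{Huang} and then Lemma \ref{0905081904}, exactly as in the paper.
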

\begin{proof}
When $h= h (T, X)$, we may take $K_h= X$. When $h< h (T, X)$, by the
classical variational principle (see for example \cite[Corollary 8.6.1]{Wa}) we may
take $\mu\in \mathcal{M}^e (X, T)$ with $h_\mu (T, X)\ge h$, then by
Proposition \ref{Huang} there exists a $T$-invariant
sub-$\sigma$-algebra $\mathcal{C}\subseteq \mathcal{B}_\mu$ with
$h_\mu (T, X| \mathcal{C})= h$, where $\mathcal{B}_\mu$ is the
completion of $\mathcal{B}_X$ under $\mu$, and so there exists
compact $K_h\subseteq X$ with $h (T, K_h)= h^B (T, K_h)= h_\mu (T,
X| \mathcal{C})= h$ (using Lemma \ref{0905081904}).
\end{proof}

\begin{rem}
We should remark that in \cite{SW} Shub and Weiss presented a t.d.s.
with infinite entropy such that its each non-trivial factor has
infinite entropy.
\end{rem}



\section{A partial answer to Question \ref{q5}}

In this section, we shall give a partial answer to Question \ref{q5}
by proving that each asymptotically $h$-expansive (equivalently,
hereditarily uniformly lowerable) t.d.s. is D-hereditarily
lowerable. 

\subsection{Each asymptotically
$h$-expansive t.d.s. is D-hereditarily lowerable} Recall that, for a
given t.d.s. $(X, T)$ with a compatible metric $d$, $(X, T)$ is
called {\it asymptotically $h$-expansive} if
$\lim_{\epsilon\rightarrow 0+} h_T^*(\epsilon)=0$, where $\epsilon>
0$ and
\begin{equation}\label{esm}
h_T^*(\epsilon)= \sup_{x\in X} h (T, \Phi_{\epsilon} (x)), \
\text{with}\ \Phi_{\epsilon} (x)= \{y\in X: d(T^n x, T^n y)\le
\epsilon\ \text{if}\ n\ge 0\}.
\end{equation}
Observe that it holds $h^* (T, X)= \lim_{\epsilon\rightarrow 0+} h_T^*(\epsilon)$ (\cite[\S 4]{BFF}).

\begin{lem} \label{l1}
Let $(X, T)$ be a symbolic t.d.s. Then $(X, T)$ is not only
hereditarily lowerable but also D-hereditarily lowerable.
\end{lem}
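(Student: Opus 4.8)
The plan is to treat the two assertions separately: the covering–entropy statement (hereditarily lowerable) is soft, while the dimensional–entropy statement (D-hereditarily lowerable) carries the real content and is where I would spend the effort.

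First I would dispose of hereditary lowerability by observing that a symbolic system is $h$-expansive. For the standard metric $d(x,y)=2^{-\min\{|i|:x_i\neq y_i\}}$ and $\epsilon$ small, one checks that $\Phi_\epsilon(x)=\{y:y_i=x_i\ \forall i\ge -k\}$ for a suitable $k=k(\epsilon)\to\infty$, so all points of $\Phi_\epsilon(x)$ carry the same forward name and $h(T,\Phi_\epsilon(x))=0$; hence $\lim_{\epsilon\to0+}h_T^*(\epsilon)=0$ and $(X,T)$ is asymptotically $h$-expansive. By \cite[Theorem 7.7]{HYZ1} an asymptotically $h$-expansive system is hereditarily uniformly lowerable, and a fortiori hereditarily lowerable.

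For D-hereditary lowerability I would reduce the dimensional entropy to a genuine Hausdorff dimension. Writing $X\subseteq\Lambda^{\mathbb Z}$ with $\Lambda$ finite, let $\mathcal U$ be the time-$0$ clopen partition and $\pi^+:X\to X^+\subseteq\Lambda^{\mathbb N}$ the projection onto the nonnegative coordinates. Equip $X^+$ with $d(x,y)=M^{-\min\{i\ge0:x_i\neq y_i\}}$, where $M$ is a fixed base large enough that $x\mapsto\sum_i\phi(x_i)M^{-i}$ (for a suitably spaced injection $\phi:\Lambda\to\mathbb Z_{\ge0}$) is a bi-Lipschitz embedding $\iota:X^+\hookrightarrow\mathbb R$. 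The key identity I would establish is
$$h^B(T,C)=\log M\cdot H_d(\pi^+C)\qquad(C\subseteq X).$$
To prove it, note that the covers $\mathcal U_{-k}^{k}$ have mesh tending to $0$ and so compute $h^B(T,C)=\lim_k h^B_{\mathcal U_{-k}^{k}}(T,C)$. On one hand $h^B_{\mathcal U_{-k}^{k}}(T,C)\ge h^B_{\mathcal U}(T,C)$ because $h^B$ increases under refinement; on the other hand refining $\mathcal U$ to $\mathcal U_{-k}^{k}$ only forces agreement on $k$ extra backward coordinates, which multiplies the number of cover elements by at most $|\Lambda|^{k}$ and shifts each $n_{T,\mathcal U}$ by $O(k)$, hence cannot move the critical exponent; so $h^B(T,C)=h^B_{\mathcal U}(T,C)$. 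Unwinding Bowen's construction, $n_{T,\mathcal U}(E)$ is exactly the length of the common forward name of $E$, so $\mathrm{diam}_d(\pi^+E)=M^{-n_{T,\mathcal U}(E)}$ and $e^{-\lambda n_{T,\mathcal U}(E)}=(\mathrm{diam}_d\pi^+E)^{\lambda/\log M}$; thus $m_{T,\mathcal U}(C,\lambda)$ is the Hausdorff measure of $\pi^+C$ after the reparametrisation $\lambda\leftrightarrow\lambda/\log M$, and comparing critical values gives the identity. (This is the exact analogue of Proposition \ref{L-exapnsive}(1): the one-sided shift expands $d$ by the factor $M$, which is precisely the situation of Lemmas \ref{nche1} and \ref{nche2}.)

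With the identity in hand the conclusion follows quickly. Let $K\subseteq X$ be Souslin and $0\le h\le h^B(T,K)$. Then $K^+:=\pi^+K$ is Souslin in $X^+$ (a continuous image of a Souslin set in a complete space), whence $\iota(K^+)$ is Souslin in $\mathbb R$ with $\dim_H\iota(K^+)=H_d(K^+)=h^B(T,K)/\log M$. If $h=h^B(T,K)$ take $K_h=K$. Otherwise $h/\log M<\dim_H\iota(K^+)$, so Proposition \ref{prop2.1} yields a compact $D\subseteq\iota(K^+)$ with $\dim_H D=h/\log M$; set $C=\iota^{-1}(D)$ and $K_h=K\cap(\pi^+)^{-1}(C)$. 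Since $C\subseteq K^+$ we have $\pi^+K_h=C$, so $h^B(T,K_h)=\log M\cdot H_d(C)=h$, as required. The main obstacle is the identity $h^B(T,C)=\log M\cdot H_d(\pi^+C)$, and inside it the step $h^B(T,C)=h^B_{\mathcal U}(T,C)$: one must verify that remembering the backward coordinates contributes nothing to the dimensional entropy, so that $h^B$ is governed entirely by the forward, expanding, one-sided structure to which the Hausdorff-dimension machinery and Proposition \ref{prop2.1} apply.
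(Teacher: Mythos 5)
Your proof is correct, and although it terminates at the same point as the paper --- Proposition \ref{prop2.1} applied to a Souslin subset of $\R$ --- the middle of the argument is genuinely different. The first half coincides: the paper also deduces hereditary lowerability from asymptotic $h$-expansiveness via \cite[Theorem 7.7]{HYZ1}. For the D-hereditary half, the paper keeps the raw $m$-adic coding: it maps the full shift onto the circle by $\pi((x_j)_{j\in\Z})=e^{2\pi i\sum_{j\ge 0}(x_j-1)/m^{j+1}}$, observes that each fibre consists of at most two forward names (times arbitrary pasts) and so has zero covering entropy, hence $h_{\text{top}}(\sigma,\cdot\,|\pi)=0$, and then invokes the relative inequality of Proposition \ref{0905191311} (2) to transfer $h^B$ to $(\mathbb{T},T_m)$, where the identification with Hausdorff dimension is outsourced to Lemmas \ref{nche1} and \ref{nche2} through Proposition \ref{L-exapnsive}. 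Your spaced digit map makes the coding injective and bi-Lipschitz, which lets you dispense with both pieces of cited machinery at the price of proving by hand exactly the two facts they encapsulate: that the backward coordinates are invisible to $h^B$ (your splitting step $h^B(T,C)=h^B_{\mathcal{U}}(T,C)$, where the distortion $|\Lambda|^k e^{\lambda k}$ is independent of the scale parameter and so cannot move the critical exponent --- this is what the paper's zero-relative-entropy argument delivers), and that Bowen's set function $m_{T,\mathcal{U}}(\cdot,\lambda)$ is, after the substitution $\lambda=t\log M$, precisely a Hausdorff measure in the ultrametric $M^{-(\text{forward agreement length})}$ (this is what Lemmas \ref{nche1} and \ref{nche2} deliver for $T_m$; note that in your ultrametric the correspondence is exact, since every set has the diameter of the smallest forward cylinder containing it). You were also right to handle the endpoint $h=h^B(T,K)$ separately, as Proposition \ref{prop2.1} only produces compact subsets of dimension strictly below $\dim_H$, and your pull-back $K_h=K\cap(\pi^+)^{-1}(C)$ is legitimate because D-lowerability asks for an arbitrary subset, not a closed one. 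What your route buys is self-containedness and transparency (no appeal to \cite{Mi-add}, \cite{CHYZ} or the relative inequality from \cite{Z1}); what the paper's route buys is brevity, since all the analytic content is absorbed into already-published statements.
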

\begin{proof}
Note that any asymptotically
$h$-expansive t.d.s. is hereditarily lowerable (\cite[Theorem 7.7]{HYZ1}), and so  $(X, T)$ is
hereditarily lowerable.

Now let $(X,T)$ be a subshift of
$(\{1,2,\cdots,m\}^{\mathbb{Z}},\sigma)$, where $m\ge 2$ and
$\sigma$ is the left shift on $\{1,2,\cdots,m\}^{\mathbb{Z}}$. Let
$\pi: \{1,2,\cdots,m\}^{\mathbb{Z}}\rightarrow \mathbb{T}$ with
$$\pi((x_j)_{j\in \mathbb{Z}})=e^{2\pi i (\sum_{j=0}^{+\infty}
\frac{x_j-1}{m^{j+1}})}$$ for $(x_j)_{j\in \mathbb{Z}}\in
\{1,2,\cdots,m\}^{\mathbb{Z}}$. Then
$\pi:(\{1,2,\cdots,m\}^{\mathbb{Z}},\sigma)\rightarrow
(\mathbb{T},T_m)$ is a factor map, where $T_m$ is defined by
$T_m(z)=z^m$ for $z\in \mathbb{T}$.

For each $z\in \mathbb{T}$, there are $y^1,y^2\in
\{1,2,\cdots,m\}^{\mathbb{Z}_+}$  such that
$$\pi^{-1}(z)=\{x=(x_j)_{j\in \mathbb{Z}}\in
\{1,2,\cdots,m\}^{\mathbb{Z}}: (x_j)_{j=0}^{+\infty}=y^1 \text{ or
}y^2\},$$ here in fact for almost all $z\in \mathbb{T}$, $y^1=y^2$.
Thus $h^B(T,\pi^{-1}(z))\le h(T,\pi^{-1}(z))=0$, and so
$h_{\text{top}}(\sigma,\{1,2,\cdots,m\}^{\mathbb{Z}}|\pi)=0$. Hence
by Proposition \ref{0905191311},
\begin{equation}\label{eeee-11}
h^B(\sigma,K)=h^B(\pi(K),T_m)
\end{equation}
for any $K\subseteq \{1,2,\cdots,m\}^{\mathbb{Z}}$. Since any
continuous image of a Souslin set in $\{1,2,\cdots,m\}^{\mathbb{Z}}$
is Souslin, we know that $(\{1,2,\cdots,m\}^{\mathbb{Z}},\sigma)$ is
D-hereditarily lowerable by Proposition \ref{L-exapnsive} (2) and
\eqref{eeee-11}. As a subsystem of
$(\{1,2,\cdots,m\}^{\mathbb{Z}},\sigma)$, $(X,T)$ is also
D-hereditarily lowerable.
\end{proof}

\begin{thm} \label{again}
Each asymptotically
$h$-expansive t.d.s. is D-hereditarily lowerable.
\end{thm}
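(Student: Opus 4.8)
The plan is to reduce the problem to the symbolic case already handled in Lemma \ref{l1} by passing to a principal symbolic extension. Since $(X,T)$ is asymptotically $h$-expansive it has finite entropy and $h^*(T,X)=\lim_{\epsilon\to 0+}h_T^*(\epsilon)=0$; by the symbolic extension theory of Boyle, Fiebig and Fiebig \cite{BFF} such a system admits a \emph{principal symbolic extension}, i.e. a subshift $(Y,S)$ over a finite alphabet together with a factor map $\pi:(Y,S)\to(X,T)$ that preserves the entropy of every invariant measure. Exactly as in the proof of Theorem \ref{interpret}, the conditional variational principle turns principality into the statement
$$h_{\text{top}}(S,Y\mid\pi)=0,$$
which is precisely the hypothesis under which Proposition \ref{0905191311} (2) becomes a two-sided equality.

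First I would lift the given data to $Y$. Let $K\subseteq X$ be Souslin and fix $0\le h\le h^B(T,K)$. Put $\widetilde K=\pi^{-1}(K)$; since the pre-image of a Souslin set under a continuous map is Souslin, $\widetilde K$ is a Souslin subset of $Y$, and $\pi(\widetilde K)=K$ because $\pi$ is surjective. Applying Proposition \ref{0905191311} (2) to $\widetilde K$ and using $h_{\text{top}}(S,Y\mid\pi)=0$ gives
$$h^B(S,\widetilde K)=h^B(T,\pi(\widetilde K))=h^B(T,K),$$
so that $0\le h\le h^B(S,\widetilde K)$. Now $(Y,S)$ is symbolic, hence D-hereditarily lowerable by Lemma \ref{l1}, so there is a subset $\widetilde K_h\subseteq\widetilde K$ with $h^B(S,\widetilde K_h)=h$. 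Finally I would push this subset back down: set $K_h=\pi(\widetilde K_h)$, so that $K_h\subseteq\pi(\widetilde K)=K$, and apply Proposition \ref{0905191311} (2) once more (again with $h_{\text{top}}(S,Y\mid\pi)=0$ and $\pi(\widetilde K_h)=K_h$) to conclude
$$h^B(T,K_h)=h^B(S,\widetilde K_h)=h.$$
Thus every Souslin $K\subseteq X$ is D-lowerable, which is the assertion.

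The two applications of Proposition \ref{0905191311} (2) and the stability of the Souslin class under continuous pre-images and images are routine bookkeeping. The one substantive ingredient — and the step I expect to be the main obstacle — is securing the principal symbolic extension: this is where the finiteness of entropy and the vanishing of $h^*(T,X)$ enter, and it is the place where the full strength of asymptotic $h$-expansiveness (rather than mere finite entropy) is used. One could alternatively try the zero-dimensional principal extension of $(X,T)$ realized as an inverse limit $\underleftarrow{\lim}(X_r,T_r)$ of subshifts, as in Theorem \ref{interpret}; but then each $\phi_r$ has only \emph{asymptotically} vanishing relative entropy, so subsets lowered inside $X_r$ and pulled back to $K$ would only realize values in shrinking intervals $[h,\,h+h_{\text{top}}(T,X\mid\phi_r)]$ rather than the exact value $h$. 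Collapsing to a single subshift with zero relative entropy via a principal symbolic extension is exactly what makes the lowered value come out precisely, and lets the construction transfer verbatim through $\pi$.
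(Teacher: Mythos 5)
Your proposal is correct and takes essentially the same route as the paper's proof: pass to a principal symbolic extension of the asymptotically $h$-expansive system (the paper cites \cite[Theorem 8.6]{BD}; your appeal to \cite{BFF} covers the same ingredient), observe via the conditional variational principle that principality yields $h_{\text{top}}(S, Y\mid \pi)=0$, and then transfer the D-hereditary lowerability of the symbolic system (Lemma \ref{l1}) through the two-sided inequality of Proposition \ref{0905191311} (2). The paper states this in one sentence; your write-up merely makes explicit the lifting of the Souslin set and the pushing forward of the lowered subset, which is exactly the intended bookkeeping.
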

\begin{proof}
Observe that each asymptotically $h$-expansive t.d.s. admits a
principal extension to a symbolic t.d.s. (\cite[Theorem 8.6]{BD}).
As commented in the first paragraph of proof of Theorem
\ref{interpret}, a principal extension has zero relative topological
entropy, and so by Proposition \ref{0905191311} and Lemma \ref{l1}
we obtain the conclusion.
\end{proof}

Remark that, by the same argument presented in the proof of Theorem
\ref{interpret}, we can obtain the following result with the help of
Proposition \ref{0905191311} (2) and Lemma \ref{l1}.

\begin{prop}
Let $(X, T)$ be a t.d.s. with finite entropy and $K\subseteq X$ Souslin. If $h^B (T, K)> h^* (T, X)$ then for each $0\le h\le h^B (T,
K)- h^* (T, X)$ one has
\begin{eqnarray*}
\overline{\{h^B (T, K'): K'\subseteq K\}}\cap [h,
h+ h^* (T, X)]\neq \emptyset.
\end{eqnarray*}
\end{prop}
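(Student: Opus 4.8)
The plan is to follow the proof of Theorem \ref{interpret} almost verbatim, making three substitutions: replace the covering entropy $h(T,\cdot)$ by the dimensional entropy $h^B(T,\cdot)$, replace compact subsets by arbitrary (Souslin) subsets, replace part (1) of Proposition \ref{0905191311} by part (2), and replace the symbolic lowerability input used there (\cite[Theorem 5.4]{HYZ1}) by Lemma \ref{l1}. The only genuinely new point to monitor is that the Souslin property survives the operations used: pre-images of Souslin sets under continuous maps are Souslin, and in compact metric (hence complete) spaces continuous images of Souslin sets are Souslin, both recalled in the introduction.

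First I would reduce to the zero-dimensional case. Since $h(T,X)=h^B(T,X)<\infty$, there is a zero-dimensional principal extension $\tilde\pi:(\tilde X,\tilde T)\to(X,T)$; as noted in the proof of Theorem \ref{interpret}, it preserves the topologically conditional entropy $h^*$ and has zero relative topological entropy. Set $\tilde K=\tilde\pi^{-1}(K)$, which is again Souslin and satisfies $h^B(\tilde T,\tilde K)=h^B(T,K)$ by Proposition \ref{0905191311} (2). Passing subsets back and forth along $\tilde\pi$ leaves the dimensional entropy unchanged: for any $K''\subseteq\tilde K$ one has $h^B(T,\tilde\pi(K''))=h^B(\tilde T,K'')$, and for any $K'\subseteq K$ the subset $\tilde\pi^{-1}(K')\subseteq\tilde K$ projects onto $K'$ with the same dimensional entropy, so $\{h^B(\tilde T,K''):K''\subseteq\tilde K\}=\{h^B(T,K'):K'\subseteq K\}$. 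Hence I may assume $(X,T)$ is zero-dimensional, written as an inverse limit $(X,T)=\underleftarrow{\lim}(X_r,T_r)$ of subshifts with natural factor maps $\phi_r$. As in Theorem \ref{interpret}, the boundary case $h=h^B(T,K)-h^*(T,X)$ is settled by taking $K'=K$, so I assume $h\in[0,h^B(T,K)-h^*(T,X))$.

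Next I would run the approximation along the inverse system. Put $K_r=\phi_r(K)$; since continuous images of Souslin sets in compact metric spaces are Souslin, each $K_r$ is a Souslin subset of the symbolic system $(X_r,T_r)$. Proposition \ref{0905191311} (2) gives $h^B(T,K)-h_{\text{top}}(T,X|\phi_r)\le h^B(T_r,K_r)\le h^B(T,K)$. Using fact (3), namely $\lim_r h_{\text{top}}(T,X|\phi_r)\le h^*(T,X)$, together with $h<h^B(T,K)-h^*(T,X)$, we get $0\le h<h^B(T_r,K_r)$ for all large $r$. By Lemma \ref{l1} the symbolic system $(X_r,T_r)$ is D-hereditarily lowerable, so there is $K_r^h\subseteq K_r$ with $h^B(T_r,K_r^h)=h$. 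Setting $K_r^{(h)}=\phi_r^{-1}(K_r^h)\cap K$, one checks $\phi_r(K_r^{(h)})=K_r^h$, and Proposition \ref{0905191311} (2) yields
\[
h=h^B(T_r,K_r^h)\le h^B(T,K_r^{(h)})\le h+h_{\text{top}}(T,X|\phi_r).
\]
Since $K_r^{(h)}\subseteq K$ and $h_{\text{top}}(T,X|\phi_r)$ decreases to $\lim_r h_{\text{top}}(T,X|\phi_r)\le h^*(T,X)$, any accumulation point of the bounded sequence $\{h^B(T,K_r^{(h)})\}_r$ lies both in $[h,h+h^*(T,X)]$ and in $\overline{\{h^B(T,K'):K'\subseteq K\}}$, which gives the desired nonempty intersection.

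I do not expect a serious obstacle, since the argument is structurally identical to that of Theorem \ref{interpret}. The one place demanding care is the bookkeeping of the Souslin property: one must check that $\tilde\pi^{-1}(K)$ and each $\phi_r(K)$ remain Souslin, so that Proposition \ref{0905191311} (2) and Lemma \ref{l1} genuinely apply, and one must confirm that passing to the zero-dimensional principal extension does not alter the set of achievable dimensional entropies over subsets of $K$. Both points are routine given the descriptive-set-theoretic facts recalled in the introduction and the two-sided estimate in Proposition \ref{0905191311} (2).
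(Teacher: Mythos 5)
Your proposal is correct and coincides with the paper's own argument: the paper proves this proposition exactly by rerunning the proof of Theorem \ref{interpret} with Proposition \ref{0905191311} (2) in place of (1) and Lemma \ref{l1} in place of the compact lowering result, which is precisely your substitution scheme. Your explicit bookkeeping of the Souslin property under $\tilde\pi^{-1}$ and $\phi_r$ (and the verification that the value set $\{h^B(T,K'):K'\subseteq K\}$ is unchanged under the principal extension) fills in details the paper leaves implicit, and it is done correctly.
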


The property of asymptotical $h$-expansiveness can be weakened
respectively as follows.

\begin{defn}
Let $(X, T)$ be a t.d.s. with a compatible metric $d$. We call it
\begin{enumerate}

\item {\it anti-asymptotically $h$-expansive}, if for
each $\epsilon> 0$ there exists a factor map $\pi: (Y, S)\rightarrow
(X, T)$, where $(Y, S)$ is a symbolic t.d.s., such that
$h_{\text{top}} (S, Y| \pi)< \epsilon$;

\item {\it quasi-asymptotically $h$-expansive}, if
 $\lim_{\epsilon\rightarrow 0+} h (T, \Phi_{\epsilon} (x))= 0$
for each $x\in X$.

\end{enumerate}
\end{defn}

We remark that there is a D-hereditarily lowerable t.d.s. which is
not quasi-asymptotically $h$-expansive. Let $(X_1, T_1)$ be the full
shift over $\Sigma_2= \{0, 1\}^\Z$ and $(X_2, T_2)$ the identity
transformation of the one point compactification over $\Z$ with
$X_2= \Z\cup \{\infty\}$. Put $(Y, S)= (X_1, T_1)\times (X_2, T_2)$.
Collapsing $X_1\times \{\infty\}$ to one point $x_0$ we get a t.d.s.
$(X, T)$. It is not hard to check that $\lim_{\ep\rightarrow 0+} h
(T, \Phi_\ep (x_0))= \log 2$, which implies that t.d.s. $(X, T)$ is
not quasi-asymptotically $h$-expansive, whereas, clearly $(X, T)$ is
D-hereditarily lowerable by Theorem \ref{again}.

In fact, we can construct a D-hereditarily lowerable t.d.s. which is
not asymptotically $h$-expansive but quasi-asymptotically
$h$-expansive.

\begin{ex} \label{0905221459}
There is a D-hereditarily lowerable t.d.s. which is not
asymptotically $h$-expansive but quasi-asymptotically $h$-expansive.
\end{ex}
\begin{proof}
For each $n\in \N$ we choose $\epsilon_n> 0$ and $C_n\subseteq [0,
1]$ homeomorphic to the Cantor set with $\lim_{n\rightarrow +\infty}
\epsilon_n= 0$ and $\lim_{n\rightarrow +\infty} C_n= [0, 1]$ (in the
sense of Hausdorff metric). Set $X= X_0\cup X_1\cup X_2\cup \cdots$,
where $X_0= \{0\}\times [0, 1]\subseteq \R^2$ and $X_n=
\{\frac{1}{n}\}\times C_n\subseteq \R^2$ for each $n\in \N$. Now for
each $n\in \N$ we let $T_n: X_n\rightarrow X_n$ be a minimal
sub-shift such that $h (T_n, \Phi_{\epsilon_n} (x_n))\ge \log 2$ for
some $x_n\in X_n$ (we may assume that $\lim_{n\rightarrow +\infty}
x_n= x_0$) and let $T_0: X_0\rightarrow X_0$ be the identity map.
Last, $(X, T)$ is defined naturally. We may add the assumptions on
the defined $(X_n, T_n), n\in \N$ such that $(X, T)$ forms a t.d.s.

We claim that t.d.s. $(X, T)$ is the system we need. It is not hard
to check that $(X, T)$ is a D-hereditarily lowerable t.d.s.
Clearly, $\lim_{\epsilon\rightarrow 0+} h (T, \Phi_\epsilon (x))= 0$
for each $x\in X$ and so $(X, T)$ is quasi-asymptotically
$h$-expansive; whereas, $$\lim_{\ep\rightarrow 0+} \sup_{x\in X} h
(T, \Phi_\ep(x))\ge \limsup_{n\rightarrow +\infty} h (T_n,
\Phi_{\epsilon_n} (x_n))\ge \log 2,$$ that is, t.d.s. $(X, T)$ is
not asymptotically $h$-expansive.

To finish our example it remains to construct a t.d.s. $(X_n, T_n)$
and $\epsilon_n> 0$ as above for each $n\in \N$. Let $n\in \N$. For
each $j= 1, \cdots, 2 n$ we put $C_n^j= \frac{2 j}{4 n+ 1}+
C_n^0\subseteq J_n^j= [\frac{2 j}{4 n+ 1}, \frac{2 j+ 1}{4 n+ 1}]$
with $C_n^0$ being linearly homeomorphic to the standard Cantor set
and set $C_n= \bigcup_{j= 0}^{2 n} C_n^j$. Define a permutation
$\phi_n: \{0, 1, \cdots, 2 n\}\rightarrow \{0,1,\cdots, 2n\}$ such
that $\phi_n^j (i)= i, 1\le j\le 2 n+ 1$ implies $j= 2 n+ 1$ and
$|\phi_n (i)- i|\le 2$ for each $0\le i\le 2 n$. For example, we set
$\phi_n(0)=2,\phi_n(2)=4, \cdots, \phi_n(2n-2)=2n, \phi_n(2n)=2n-1$;
$\phi_n(2n-1)=2n-3, \phi_n(2n-3)=2n-5, \cdots, \phi_n(3)=1$ and
$\phi_n(1)=0$. Now let $(C_n^0,S_n)$ be a minimal sub-shift with $h
(S_n, C_n^0)\ge (2n+1)\log 2$. We define $T_n:X_n\rightarrow X_n$ as
follows: $T_n(\frac{1}{n}, y_n)=(\frac{1}{n},
\frac{2\phi_n(j)}{4n+1}+c_n)$ if $y_n\not\in C_n^{\phi_n^{- 1} (0)}$
with $y_n=\frac{2j}{4n+1}+c_n$, where $c_n\in C_n^0$, and
$T_n(\frac{1}{n},y_n)=(\frac{1}{n}, S_n(c_n))$ if $y_n=\frac{2
\phi_n^{- 1} (0)}{4n+1}+c_n\in C_n^{\phi_n^{- 1} (0)}$, where
$c_n\in C_n^0$. Let $\epsilon_n=\frac{5}{4n+1}$. It is easy to check
that the constructed $(X_n, T_n)$ is a minimal sub-shift. Moreover,
for each $x_n\in C_n^0$, $C_n^0\subseteq \Phi_{\epsilon_n} (x_n)$
and so (by Lemma \ref{prod0} (1))
\begin{eqnarray*}
h (T_n, \Phi_{\epsilon_n} (x_n))&= & \frac{1}{2 n+ 1} h (T_n^{2 n+
1}, \Phi_{\epsilon_n} (x_n)) \\
&\ge & \frac{1}{2 n+ 1} h (T_n^{2 n+ 1}, C_n^0)= \frac{1}{2 n+ 1} h
(S_n, C_n^0)\ge \log 2
\end{eqnarray*}
(in fact, this works for each $x_n\in X_n$). Last, by the assumption
of $\phi_n$ it is not hard to check that the defined $(X, T)$ is a
t.d.s. This finishes the construction of our example.
\end{proof}

\begin{rem} \label{0905221507}
In the above example we can choose $(C_n^0, S_n), n\in \N$ such that
$h_{\text{top}} (T, X)= +\infty$, and so one has that there is a
D-hereditarily lowerable t.d.s. (with entropy infinite) which is not
anti-asymptotically $h$-expansive but quasi-asymptotically
$h$-expansive. We do not know if there is such an example which is
minimal.
\end{rem}

\subsection{Each anti-asymptotically
$h$-expansive t.d.s. is asymptotically $h$-expansive} In this
subsection we will show that asymptotical $h$-expansiveness and
anti-asymptotical $h$-expansiveness are equivalent properties. For
that, we need some notions and results in \cite{BD}.

Given a t.d.s. $(X, T)$, we will say a sequence of partitions
$\{\alpha_k\}$ of $X$ is {\it refining} if the maximum diameter of
elements of $\alpha_k$ goes to zero with $k$; and for each $k$ the
partition $\alpha_{k+1}$ refines $\alpha_k$. The partitions have
{\it small boundaries} if their boundaries have measure zero for all
$\mu$ in $\mathcal{M}(X,T)$. For a finite entropy t.d.s. $(X,T)$
admitting a nonperiodic minimal factor, by \cite[ Theorem 6.2]{L2}
and \cite[Theorem 4.2]{LW} we know that $(X,T)$ has the so called
small boundary property, which is equivalent to the existence of a basis of
the topology consisting of sets whose boundaries have measure zero
for every invariant measure. Moreover, it is easy to construct the
refining sequence of partitions with small boundaries for $(X, T)$ (see \cite[Theorem
7.6 (3)]{BD}).

\begin{defn} Let $(X,T)$ be a finite entropy t.d.s. admitting a
nonperiodic minimal factor. An entropy structure for $(X, T)$  is a
sequence $\mathcal{H}$ of functions $\{h_k\}$ defined on
$\mathcal{M}(X,T)$ in the following way: suppose $\{\alpha_k\}$ is a
refining sequence of finite Borel partitions with small boundaries, then $h_k: \mathcal{M}(X,T)\ra \R $ is obtained by setting
$\mu\mapsto h_\mu(T,\alpha_k)$ for each $k\in \N$.
\end{defn}

The following result follows from \cite[Theorem 8.6]{BD}.
\begin{prop}  \label{asy-ch}Let $(X,T)$ be a  finite entropy t.d.s. admitting a
nonperiodic minimal factor. The following statements are equivalent for $(X,T)$
with entropy structure $\mathcal{H}$:
\begin{enumerate}
\item $(X,T)$ is asymptotically h-expansive.
\item  $h_k$ converges uniformly to the entropy function $h$, where
$h(\mu):=h_{\mu}(T,X)$ for each $\mu\in \mathcal{M}(X,T)$.
\end{enumerate}
\end{prop}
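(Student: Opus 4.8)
The plan is to route both statements through the topological conditional entropy $h^*(T,X)$, playing Misiurewicz's characterization of asymptotic $h$-expansiveness against the entropy-structure machinery of \cite{BD}.

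First I would observe that (1) is nothing other than the vanishing of $h^*(T,X)$. Indeed, by the fact recalled just above (\cite[\S 4]{BFF}) we have $h^*(T,X)=\lim_{\epsilon\ra 0+}h_T^*(\epsilon)$, while $(X,T)$ is asymptotically $h$-expansive precisely when $\lim_{\epsilon\ra 0+}h_T^*(\epsilon)=0$; hence (1) holds if and only if $h^*(T,X)=0$. This reduces the proposition to the equivalence between $h^*(T,X)=0$ and the uniform convergence of $h_k$ to $h$ on $\mathcal{M}(X,T)$.

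Second, I would verify that $\{h_k\}$ is a genuine entropy structure in the sense of \cite{BD}, so that the results there may be applied. This is exactly where the hypotheses enter: as $(X,T)$ has finite entropy and a nonperiodic minimal factor, it enjoys the small boundary property by \cite[Theorem 6.2]{L2} and \cite[Theorem 4.2]{LW}, so a refining sequence $\{\alpha_k\}$ of partitions with small boundaries exists and $h_k(\mu)=h_\mu(T,\alpha_k)$ is the associated entropy structure. By construction each $h_k$ is nondecreasing in $k$ and converges pointwise to $h$ on $\mathcal{M}(X,T)$, so the whole content is whether this convergence is uniform.

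Finally, I would invoke \cite[Theorem 8.6]{BD}. In the finite-entropy, entropy-structure framework that theorem packages together asymptotic $h$-expansiveness, the existence of a principal symbolic extension, and the uniform convergence of the entropy structure to the entropy function; in particular it identifies the defect of uniform convergence of $\{h_k\}$ with $h^*(T,X)$, so that $h_k\ra h$ uniformly exactly when $h^*(T,X)=0$. Combined with the first step this yields (1) $\Leftrightarrow$ (2). The main obstacle is not any computation but the bookkeeping at the second step: one must be sure that the partition-based sequence $\{h_k\}$ is admissible as an entropy structure (which is precisely what the small-boundary hypothesis secures) so that the analytic heart of the argument — the identification of the topological tail of the entropy structure with $h^*(T,X)$ — can be imported wholesale from \cite[Theorem 8.6]{BD}.
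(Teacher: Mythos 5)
Your proposal is correct and follows essentially the same route as the paper, which proves this proposition simply by citing \cite[Theorem 8.6]{BD} (having already set up, in the preceding paragraphs, the small boundary property via \cite[Theorem 6.2]{L2} and \cite[Theorem 4.2]{LW} to guarantee the refining partitions $\{\alpha_k\}$ exist, and the identity $h^*(T,X)=\lim_{\epsilon\rightarrow 0+}h_T^*(\epsilon)$ from \cite[\S 4]{BFF}). Your extra bookkeeping—checking that the partition-based $\{h_k\}$ is an admissible entropy structure before importing the Boyle--Downarowicz equivalence—is exactly the content the paper delegates to its preliminary remarks, so nothing is missing.
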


\begin{defn} \label{de1} A function $f : K\rightarrow \R$ defined on a compact metric space
$K$ is upper semicontinuous (u.s.c.) if one of the following
equivalent conditions holds:
\begin{enumerate}
\item $f = \inf_{i\in I} f_i$ for some family $\{f_i\}_{i\in I}$ of continuous functions.

\item $f = \lim_{i\rightarrow +\infty} g_i$, where $\{g_i\}$ is a nonincreasing sequence of continuous functions.

\item For each $r \in \mathbb{R}$, the set $\{x\in K: f(x) \ge  r\}$ is closed.

\item  $\limsup\limits_{x'\rightarrow x} f(x') \le f(x)$ at each $x\in K$.
\end{enumerate}
\end{defn}

For u.s.c functions, the following properties hold:
\begin{itemize}
\item[{a)}] The infimum of any family of u.s.c. functions is
u.s.c. (by Definition \ref{de1} (1)).

\item[{b)}] Both the sum and the supremum of finitely many u.s.c.
functions are u.s.c. (by  Definition \ref{de1} (2)).

\item[{c)}] Every u.s.c. function from a compact metric space to $\R$ is bounded
above and attains its maximum.
\end{itemize}

The following result is \cite[Proposition 2.4]{BD}.
\begin{prop} \label{esi}(Exchanging Suprema and Infima). If $\{f_i\}_{i\in \mathbb{N}}$ is a nonincreasing
sequence of u.s.c. functions on a compact metric space $K$, then
$$\inf_{i\in \mathbb{N}} \sup_{x\in K} f_i(x)= \sup_{x\in K} \inf_{i\in \mathbb{N}} f_i(x).$$
\end{prop}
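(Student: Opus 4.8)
The plan is to prove the two inequalities separately, since one of them is free. The inequality
$$\sup_{x\in K}\inf_{i\in\N} f_i(x)\le \inf_{i\in\N}\sup_{x\in K} f_i(x)$$
holds for any family of functions and uses no hypothesis whatsoever: for each fixed $j\in\N$ and each $x\in K$ one has $\inf_{i\in\N} f_i(x)\le f_j(x)\le \sup_{x'\in K} f_j(x')$, so taking the supremum over $x$ and then the infimum over $j$ yields the claim. Consequently the entire content of the proposition lies in the reverse inequality, and that is where upper semicontinuity and compactness must be used.

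For the reverse direction I would write $g=\inf_{i\in\N} f_i$ and $a=\inf_{i\in\N}\sup_{x\in K} f_i(x)$. Since $\{f_i\}$ is nonincreasing, $g=\lim_{i\to\infty} f_i$ pointwise and $a=\lim_{i\to\infty}\sup_{x\in K} f_i(x)$. By property c) of u.s.c.\ functions, each supremum $\sup_{x\in K} f_i(x)$ is attained at some $x_i\in K$, so $f_i(x_i)=\sup_{x\in K} f_i(x)\ge a$. Using compactness of $K$, I would pass to a subsequence $x_{i_k}\to x^*$ with $x^*\in K$. The goal then becomes to show $g(x^*)\ge a$, for this gives $\sup_{x\in K} g(x)\ge g(x^*)\ge a$ and closes the argument.

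To establish $g(x^*)\ge a$, fix $j\in\N$. For every $k$ large enough that $i_k\ge j$, monotonicity of the sequence gives $f_j(x_{i_k})\ge f_{i_k}(x_{i_k})=\sup_{x\in K} f_{i_k}(x)\ge a$, so $\limsup_{k\to\infty} f_j(x_{i_k})\ge a$. On the other hand, upper semicontinuity of $f_j$ (Definition \ref{de1} (4)), applied to the sequence $x_{i_k}\to x^*$, gives $\limsup_{k\to\infty} f_j(x_{i_k})\le f_j(x^*)$. Combining the two, $f_j(x^*)\ge a$ for every $j\in\N$, whence $g(x^*)=\inf_{j\in\N} f_j(x^*)\ge a$, as needed.

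The hard part will be the coupling carried out in the last paragraph: monotonicity supplies a \emph{lower} bound $f_j(x_{i_k})\ge a$ along the chosen maximizing sequence, while upper semicontinuity controls $f_j$ only from \emph{above} in the limit, and the proof succeeds precisely because these two one-sided estimates point in compatible directions at the common limit point $x^*$. A secondary point to verify is the existence of the maximizers $x_i$, which is exactly property c), together with the fact that the extracted limit $x^*$ remains in $K$, guaranteed by compactness; without compactness the maximizing points could escape and the argument would break down.
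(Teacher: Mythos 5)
Your proof is correct. It is worth noting that the paper offers no proof of this proposition at all: it is quoted verbatim as \cite[Proposition 2.4]{BD}, and the argument given there is slightly different from yours. The standard proof runs through Definition \ref{de1} (3) rather than (4): setting $a=\inf_{i\in\N}\sup_{x\in K}f_i(x)$, the superlevel sets $K_i=\{x\in K: f_i(x)\ge a\}$ are closed by upper semicontinuity, nonempty because each $f_i$ attains its supremum (property c)) and that supremum is $\ge a$, and nested because the sequence is nonincreasing; compactness then gives a point $x^*\in\bigcap_{i\in\N}K_i$, and for this point $\inf_{i\in\N}f_i(x^*)\ge a$ at once. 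Your route replaces this finite-intersection-property argument with sequential compactness: you extract a convergent subsequence of maximizers $x_{i_k}\to x^*$ and couple the monotonicity lower bound $f_j(x_{i_k})\ge f_{i_k}(x_{i_k})\ge a$ (valid once $i_k\ge j$) with the sequential form of u.s.c.\ to conclude $f_j(x^*)\ge a$ for every $j$. The two proofs use the same three ingredients (attainment, monotonicity, compactness), but the nested-sets version reaches the common point in one stroke, avoids the index bookkeeping with $i_k\ge j$, and works on arbitrary compact spaces, whereas your sequential version is tied to (sequential) compactness of a metric space --- which is all the proposition assumes, so nothing is lost here. One degenerate case you silently pass over, $a=-\infty$ (possible since only upper bounds on the $f_i$ are guaranteed), is harmless: every inequality you assert then holds trivially, so the argument is complete as written.
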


\begin{thm} \label{qahe}
Asymptotical $h$-expansiveness and anti-asymptotical $h$-expansiveness
are equivalent properties.
\end{thm}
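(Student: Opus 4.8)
The plan is to prove the two implications separately, and only the converse will carry real content. For the forward implication (asymptotically $h$-expansive $\Rightarrow$ anti-asymptotically $h$-expansive) I would argue in one line: by \cite[Theorem 8.6]{BD} an asymptotically $h$-expansive system admits a \emph{principal} extension $\pi\colon (Y,S)\to (X,T)$ with $(Y,S)$ symbolic, and, as recalled in the first paragraph of the proof of Theorem \ref{interpret}, a principal extension has zero relative topological entropy, $h_{\text{top}}(S,Y\mid \pi)=0$. Hence this single extension witnesses $h_{\text{top}}(S,Y\mid\pi)<\epsilon$ for every $\epsilon>0$, which is precisely anti-asymptotic $h$-expansiveness.

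For the converse I would first observe that a symbolic extension forces $h_{\text{top}}(T,X)\le h_{\text{top}}(S,Y)<\infty$, so $(X,T)$ has finite entropy; after passing to a principal extension possessing a nonperiodic minimal factor (an operation that alters neither asymptotic nor anti-asymptotic $h$-expansiveness and preserves the entropy structure) I may assume the standing hypotheses of Proposition \ref{asy-ch}. By that proposition it is enough to prove that the entropy structure $\mathcal{H}=\{h_k\}$ converges uniformly to the entropy function $h$, that is $\lim_k \sup_{\mu} (h-h_k)(\mu)=0$, where each $h_k$ is u.s.c. and $h_k\nearrow h$ on the compact metric space $\mathcal{M}(X,T)$.

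Now fix $\epsilon>0$ and, using anti-asymptotic $h$-expansiveness, choose a symbolic extension $\pi\colon(Y,S)\to(X,T)$ with $h_{\text{top}}(S,Y\mid\pi)<\epsilon$. I would package it into the function $E(\mu)=\sup\{h_\nu(S,Y):\pi_*\nu=\mu\}$ on $\mathcal{M}(X,T)$. Since $Y$ is symbolic, hence expansive, the map $\nu\mapsto h_\nu(S,Y)$ is u.s.c., and combined with the upper-semicontinuous dependence of the compact fibers $\{\nu:\pi_*\nu=\mu\}$ this makes $E$ u.s.c. (u.s.c. property (c)); the relative variational principle then gives $h\le E\le h+h_{\text{top}}(S,Y\mid\pi)\le h+\epsilon$. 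Passing to the u.s.c. envelopes $\widetilde{(h-h_k)}$, which form a nonincreasing sequence of u.s.c. functions satisfying $\sup_\mu\widetilde{(h-h_k)}=\sup_\mu(h-h_k)$, I would apply the Exchange Proposition \ref{esi} to obtain
$$\inf_k\sup_\mu (h-h_k)(\mu)=\sup_\mu\inf_k\widetilde{(h-h_k)}(\mu).$$
The plan is then to show the right-hand side is $\le\epsilon$ by invoking that $E$, being the extension entropy function of a symbolic extension, is a superenvelope of $\mathcal{H}$; the superenvelope inequality $h_k+\widetilde{(h-h_k)}\le E$ yields $\inf_k\widetilde{(h-h_k)}(\mu)\le E(\mu)-h(\mu)\le\epsilon$ for every $\mu$. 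Since $\epsilon>0$ is arbitrary this forces $\inf_k\sup_\mu(h-h_k)=0$, the required uniform convergence, and Proposition \ref{asy-ch} completes the proof.

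The hard part will be exactly the estimate $\inf_k\widetilde{(h-h_k)}\le E-h$. Because $h_k$ is only upper (not lower) semicontinuous, the difference $E-h_k$ need not be u.s.c., so one cannot bound the envelope $\widetilde{(h-h_k)}$ by $E-h_k$ termwise; the genuine input is that an extension entropy function of a symbolic extension is a superenvelope of the entropy structure in the sense of Boyle--Downarowicz, which is what legitimizes $h_k+\widetilde{(h-h_k)}\le E$, and verifying this (controlling the accumulation of the defects of upper semicontinuity of $h$ level by level) is the technical heart. The subsidiary points — that $E$ is genuinely u.s.c. and that the reduction to a nonperiodic minimal factor is legitimate — are routine, and the sole role of Proposition \ref{esi} is to convert the pointwise bound $\inf_k\widetilde{(h-h_k)}\le\epsilon$ into the uniform bound on $\sup_\mu(h-h_k)$.
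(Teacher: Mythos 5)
Your overall strategy is viable and is essentially the Boyle--Downarowicz superenvelope route, and your forward implication coincides with the paper's. But as written the proof has two genuine gaps. The first is the one you yourself flag: everything hinges on the inequality $h_k+\widetilde{(h-h_k)}\le E$, i.e.\ on the fact that the extension entropy function $E(\mu)=\sup\{h_\nu(S,Y):\pi_*\nu=\mu\}$ of a symbolic extension is a superenvelope of the entropy structure (equivalently, that $E-h_k$ is u.s.c.\ for every $k$ --- note that $E$ alone being u.s.c.\ is useless here, since $h_k$ is only u.s.c.). You defer this to ``the technical heart'' without proving it, so the argument is incomplete precisely at its crux. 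It is worth knowing that the paper's proof consists exactly of supplying this verification, and does so by a short direct computation rather than by the full symbolic-extension machinery: with $\beta_m$ a generating clopen partition of the symbolic extension and $\{\alpha_k\}$ small-boundary partitions, the function $g^m_k(\nu)=h_\nu(T_m,X_m)-h_{\pi_m^*\nu}(S,\alpha_k)$ is rewritten as $\inf_{N}\frac1N H_\nu\bigl(\bigvee_{i=0}^{N-1}T_m^{-i}(\beta_m\vee\pi_m^{-1}\alpha_k)\,\big|\,\bigvee_{i=0}^{N-1}T_m^{-i}\pi_m^{-1}\alpha_k\bigr)$, an infimum of continuous functions, hence u.s.c.; taking the fiberwise supremum gives u.s.c.\ of $f^m_k=E^{\pi_m}-h_k$ --- exactly your superenvelope property. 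The paper then avoids your envelope calculus altogether: using a sequence of symbolic extensions with relative entropy $<\frac1m$ it gets $0\le f^m_k-(h-h_k)\le\frac1m$, hence $h-h_k=\inf_m f^m_k$ is itself u.s.c., and Proposition \ref{esi} applied to the tails $h-h_k$ directly yields uniform convergence; your one-extension-per-$\epsilon$ scheme with u.s.c.\ envelopes would also work once the superenvelope inequality is established, but it is the longer way around.

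The second gap is the reduction: you assert that one may ``pass to a principal extension possessing a nonperiodic minimal factor,'' an operation that ``alters neither asymptotic nor anti-asymptotic $h$-expansiveness.'' For a general principal extension this is not clear in the upward direction: anti-asymptotic $h$-expansiveness of $(X,T)$ provides symbolic extensions of $X$, and there is no evident way to lift them to symbolic extensions of an arbitrary principal extension $X'$ with small relative entropy. The paper's concrete choice resolves this: take $(Y,S)=(X\times Z,T\times R)$ with $(Z,R)$ aperiodic minimal of zero entropy. Then $(Z,R)$ has a principal (zero relative entropy) symbolic extension, and the product of the two symbolic extensions witnesses anti-asymptotic $h$-expansiveness of $(Y,S)$; $(Z,R)$ supplies the nonperiodic minimal factor needed for the entropy-structure hypotheses of Proposition \ref{asy-ch}; and the return trip to $(X,T)$ is handled explicitly via $\Phi_\epsilon((x,z))\supseteq\Phi_\epsilon(x)\times\{z\}$, giving $h^*_{T\times R}(\epsilon)\ge h^*_T(\epsilon)$, so asymptotic $h$-expansiveness descends. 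You should either adopt this product construction or justify each of the three preservation claims you invoke; as stated they are not routine.
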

\begin{proof}
Remark that each asymptotically
$h$-expansive t.d.s. admits a principal extension to a symbolic t.d.s. which has zero relative topological entropy (cf the proof of Theorem \ref{again}).
Thus
an asymptotically $h$-expansive
t.d.s. is clearly anti-asymptotically $h$-expansive by definitions.

Conversely, let $(X, T)$ be an anti-asymptotically $h$-expansive
t.d.s. Suppose $(Z,R)$ is an aperiodic minimal zero entropy system.
Let $Y=X\times Z$ and $S=T\times R$. Then $(Y,S)$ is an
anti-asymptotically $h$-expansive t.d.s., since $(Z,R)$ is
asymptotically $h$-expansive.

Now $(Y,S)$ is a finite entropy t.d.s. admitting a nonperiodic
minimal factor $(Z,R)$. Suppose $\{\alpha_k\}$ is a refining sequence of
finite Borel partitions of $(Y,S)$ with small boundaries. Define
$\mathcal{H}$ by setting $h_k :\mu\mapsto h_\mu(S,\alpha_k)$ for each $k\in \N$. Then
$\mathcal{H}$ is an entropy structure of $(Y,S)$.

For $m\in \mathbb{N}$, let $\pi_m: (X_m,T_m)\rightarrow (Y,S)$ be a
factor map such that $(X_m,T_m)$ is a symbolic t.d.s. and
$h_{\text{top}}(T_m, X_m|\pi_m)<\frac{1}{m}$. Then $\pi_m$ induces a
continuous map $\pi_m^*:\mathcal{M}(X_m,T_m)\rightarrow
\mathcal{M}(Y,S)$ satisfying $\pi_m^*\nu(A)=\nu(\pi^{-1}_mA)$ for
any $\nu\in \mathcal{M}(X_m,T_m)$ and  any Borel subset $A$ of $Y$.

Let $\beta_m$ be a generating clopen partition of $(X_m,T_m)$. Now
we consider the function $g^{m}_k:\mathcal{M}(X_m,T_m)\rightarrow
\mathbb{R}$ with
$g^m_{k}(\nu)=h_\nu(T_m,X_m)-h_{\pi_m^*\nu}(S,\alpha_k)$ for each $\nu\in
\mathcal{M}(X_m,T_m)$. Then for $k\in \mathbb{N}$,
\begin{align*}
g_k^m(\nu)&=h_\nu(T_m,\beta_m\vee
\pi_m^{-1}(\alpha_k))-h_{\nu}(T_m,\pi_m^{-1}\alpha_k)\\
&=\lim_{N\rightarrow +\infty}
\frac{1}{N}H_\nu(\bigvee_{i=0}^{N-1}T^{-i}_m (\beta_m\vee
\pi_m^{-1}(\alpha_k))|\bigvee_{i=0}^{N-1}T^{-i}_m\pi_m^{-1}\alpha_k)\\
&=\inf_{N\ge 1} \frac{1}{N} H_\nu(\bigvee_{i=0}^{N-1}T^{-i}_m (\beta_m\vee
\pi_m^{-1}(\alpha_k))|\bigvee_{i=0}^{N-1}T^{-i}_m\pi_m^{-1}\alpha_k).
\end{align*}
The last equality follows from the fact that the sequence
$$a_n(\nu):=H_\nu(\bigvee_{i=0}^{n-1}T^{-i}_m (\beta_m\vee
\pi_m^{-1}(\alpha_k))|\bigvee_{i=0}^{n-1}T^{-i}_m\pi_m^{-1}\alpha_k)$$
is subadditive, i.e. $a_{n_1+n_2}(\nu)\le
a_{n_1}(\nu)+a_{n_2}(\nu)$. Since $\beta_m$ and $\pi_m^{-1}\alpha_k$
have small boundaries, $\nu\mapsto H_\nu(\bigvee_{i=0}^{N-1}T^{-i}_m
(\beta_m\vee
\pi_m^{-1}(\alpha_k))|\bigvee_{i=0}^{N-1}T^{-i}_m\pi_m^{-1}\alpha_k)$
is a continuous function on $\mathcal{M}(X_m,T_m)$ for each $N\in
\N$. Thus the function $g_k^m$ is u.s.c. by Definition \ref{de1}
(1).

Next, we let $f_k^m: \mathcal{M}(Y,S)\rightarrow \mathbb{R}$ with
\begin{align*}
f_k^m(\mu):&=\sup\{ g_k^m(\nu):\nu \in
(\pi_m^*)^{-1}(\mu)\}\\
&=\sup\{ h_\nu(T_m,X_m):\nu \in
(\pi_m^*)^{-1}(\mu)\}-h_{\mu}(S,\alpha_k)
 \end{align*} for each $\mu\in
\mathcal{M}(Y,S)$. Since $g_k^m$ is u.s.c., it is easily seen using
Definition \ref{de1} (4) that $f_k^m$ is also u.s.c.

Now let $f_k(\mu):=h_{\mu}(S,Y)-h_{\mu}(S,\alpha_k)$ for each $\mu\in
\mathcal{M}(Y,S)$. Then $\{f_k\}$ is a decreasing sequence of
non-negative functions on $\mathcal{M}(Y,S)$ and $\lim_{k\rightarrow
+\infty}f_k(\mu)=0$ for all $\mu\in \mathcal{M}(Y,S)$. Since
\begin{align*}
f_k^m(\mu)-f_k(\mu)&=\sup\{ h_\nu(T_m,X_m)-h_{\pi_m^*\nu}(S,Y):\nu
\in (\pi_m^*)^{-1}(\mu)\}\\ &=\sup\{ h_\nu(T_m,X_m|\pi_m):\nu \in
(\pi_m^*)^{-1}(\mu)\}\\
& \in [0,h_{\text{top}}(T_m,X_m|\pi_m)]\subseteq [0,\frac{1}{m}]
\end{align*}
for any $\mu\in \mathcal{M}(Y,S)$,  we have $f_k=\inf\limits_{m\in
\mathbb{N}}f_k^m$. Thus $f_k$ is u.s.c. since each $f_k^m$ is u.s.c.

By Proposition \ref{esi},
$$\lim_{k\rightarrow +\infty} \sup_{\mu\in
\mathcal{M}(Y,S)}f_k(\mu)=\inf_{k\in \mathbb{N}} \sup_{\mu\in
\mathcal{M}(Y,S)}f_k(\mu)= \sup_{\mu\in \mathcal{M}(Y,S)}\inf_{k\in
\mathbb{N}}f_k(\mu)=0.$$ Thus $h_k$  converges uniformly to the
entropy function $h$, where $h_k(\mu)=h_{\mu}(S,\alpha_k)$ and
$h(\mu)=h_{\mu}(S,Y)$. Hence the system $(Y,S)$ is asymptotically $h$-expansive
by Proposition \ref{asy-ch}.

Finally since for any $x\in X,z\in Z$ and $\epsilon>0$,
$\Phi_\epsilon((x,z))\supset \Phi_\epsilon(x)\times \{z\}$ by
definition \eqref{esm}, we have $h_{T\times R}^*(\epsilon)\ge
h_{T}^*(\epsilon)$. So $\lim_{\epsilon\rightarrow 0+}
h_T^*(\epsilon)=0$ since $\lim_{\epsilon\rightarrow 0+} h_{T\times
R}^*(\epsilon)=0$. Thus $(X,T)$ is asymptotically $h$-expansive.
\end{proof}

\end{document}